\theoremstyle{plain}
\newtheorem{thm}[equation]{Theorem}
\newtheorem{prop}[equation]{Proposition}
\newtheorem{cor}[equation]{Corollary}
\newtheorem{lem}[equation]{Lemma}
\newtheorem{conj}[equation]{Conjecture}
\newtheorem{open}[equation]{Open Problem}
\newtheorem{unfortunate}[equation]{Fact}
\crefname{prop}{proposition}{propositions}
\Crefname{prop}{Proposition}{Propositions}
\crefname{section}{section}{sections}
\Crefname{section}{Section}{Sections}
\Crefname{exer}{Exercise}{Exercises}
\Crefname{conj}{Conjecture}{Conjectures}
 \newcounter{step}
 \newenvironment{step}[1][\unskip]{\refstepcounter{step}\em
 \medskip \noindent Step \thestep\ #1.\ }{\unskip\upshape}
 \renewcommand{\thestep}{\arabic{step}}
 \newcounter{case}
 \newenvironment{case}[1][\unskip]{\refstepcounter{case}\it
 \medskip \noindent Case \thecase\ #1. }{\unskip\upshape}
 \renewcommand{\thecase}{\arabic{case}}
\theoremstyle{remark}
\newtheorem{rem}[equation]{Remark}
\newtheorem{rems}[equation]{Remarks}
\newtheorem{defn}[equation]{Definition}
\newtheorem{eg}[equation]{Example}
\newtheorem{egs}[equation]{Examples}
\newtheorem{notation}[equation]{Notation}
\newtheorem{exer}[equation]{Exercise}
\newtheorem{exers}[equation]{Exercises}
\newtheorem{recall}[equation]{Recall}
\newtheorem{mainques}[equation]{Main Question}
\newtheorem{assump}[equation]{Assumption}
\newtheorem{term}[equation]{Terminology}
\newtheorem{warn}[equation]{Warning}
\newcommand{\soln}[1]{\medbreak\subsubsection*{\cref{#1}}}
\newcommand{\fullsoln}[2]{\medbreak\subsubsection*{\fullcref{#1}{#2}}}
\crefname{exer}{exercise}{exercises}
\Crefname{exer}{Exercise}{Exercises}
 \numberwithin{equation}{chapter}
\newcommand{\fullcsee}[2]{(see \cref{#1}\pref{#1-#2})}
\newcommand{\real}{\mathbb{R}}
\newcommand{\integer}{\mathbb{Z}}
\renewcommand{\natural}{\mathbb{N}}
\newcommand{\rational}{\mathbb{Q}}
\newcommand{\complex}{\mathbb{C}}
\newcommand{\hyper}{\mathbb{H}}
\newcommand{\torus}{\mathbb{T}}
\newcommand{\power}[1]{2^{#1}}
\newcommand{\onto}{\mathrel{\lower0.5pt\hbox{\LARGE$\twoheadrightarrow$}}}
\newcommand{\normal}{\triangleleft}
\newcommand{\iso}{\cong}
\newcommand{\Qrank}{\rank_{\rational}}
\newcommand{\Rrank}{\rank_{\real}}
\def\midline#1{\setbox0\hbox{\kern1pt $#1$\kern1pt
}\mathord{\hbox to 0pt{\kern1pt $#1$\hss}\vrule width\wd0
height2.25pt depth -1.75pt}} \def\pp{\midline{p}}
\DeclareMathOperator{\Homeo}{Homeo}
\DeclareMathOperator{\SL}{SL}
\DeclareMathOperator{\PSL}{PSL}
\DeclareMathOperator{\SO}{SO}
\DeclareMathOperator{\SU}{SU}
\DeclareMathOperator{\Sp}{Sp}
\DeclareMathOperator{\Prob}{Prob}
\DeclareMathOperator{\rank}{\mathrm{rank}}
\DeclareMathOperator{\NH}{Near}
\DeclareMathOperator{\QM}{Quasi}
\newcommand{\U}{\overline{U}}
\newcommand{\V}{\underline{V}}
\newcommand{\Hb}{H_b}
\newcommand{\lift}{\widetilde}
\newcommand{\heis}[1]{\lower0.75pt\hbox{\ovalbox{#1}}}
\newcommand{\pref}[1]{{\upshape(}\ref{#1}{\upshape)}}
\newcommand{\fullcref}[2]{\cref{#1}{\upshape(}\ref{#1-#2}{\upshape)}}
\newcommand{\noprelistbreak}{\@nobreaktrue\nopagebreak\smallskip} 
\newcommand{\zz}[1]{\hbox to 0pt{#1\hss}}
\newcommand{\hintit}[1]{\textsf{\smaller{\upshape[}#1{\upshape]}}}
\newcommand{\hint}[1]{\hintit{\emph{Hint:} #1}}
\begin{document}


\frontmatter
\tableofcontents


\mainmatter


 \LectureSeries[Some arithmetic groups that do not act on $S^1$]%
 {Some~arithmetic~groups that do~not~act~on~the~circle \author{Dave Witte Morris}}


\address{Department of Mathematics and Computer Science,
University of Lethbridge, Lethbridge, Alberta, T1K~3M4, Canada}
\email{Dave.Morris@uleth.ca}





\section*{Abstract}
The group $\SL(3,\integer)$ cannot act (faithfully) on the circle (by homeomorphisms).  We will see that many other arithmetic groups also cannot act on the circle.  The discussion will involve several important topics in group theory, such as ordered groups, amenability, bounded generation, and bounded cohomology.

\Cref{LOLect} provides an introduction to the subject, and uses the theory of left-orderable groups to prove that $\SL(3,\integer)$ does not act on the circle.
\Cref{BddGenLect} discusses bounded generation, and proves that groups of the form $\SL \bigl( 2, \integer[\alpha] \bigr)$ do not act on the real line.
\Cref{AmenLect,BddCohoLect} are brief introductions to amenable groups and bounded cohomology, respectively. They also explain how these ideas can be used to prove that actions on the circle have finite orbits.
An appendix provides hints or references for all of the exercises.

These notes are slightly expanded from talks given at the Park City Mathematics Institute's Graduate Summer School in July 2012. The author is grateful to the PCMI staff for their hospitality, the organizers for the invitation to take part in such an excellent conference, and the students for their energetic participation and helpful comments that made the course so rewarding (and improved these notes).


\lecture{Left-orderable groups and a proof for $\SL(3,\integer)$} \label{LOLect}

\section{Introduction}

In Geometric Group Theory (and many other fields of mathematics), one of the main methods for understanding a group is to look at the spaces it can act on. (For example,  speakers at this conference have discussed actions of groups on $\delta$-hyperbolic spaces, $\mathrm{CAT}(0)$ cube complexes, Euclidean buildings, and  other spaces of geometric interest.) 
In these lectures, we consider only very simple spaces, namely, the real line~$\real$ and the circle~$S^1$. 
Also, we consider only a single, very interesting class of groups, namely, the arithmetic groups. More precisely, the topic of these lectures is:

\begin{mainques} \label{MainQues}
Let\/ $\Gamma$ be $\SL(n, \integer)$, or some other arithmetic group. 
	\begin{enumerate}
	\item Does there exist a faithful action of~$\Gamma$ on~$\real$?
	\item Does there exist a faithful action of~$\Gamma$ on~$S^1$?
	\end{enumerate}
All actions are assumed to be continuous, so the questions ask whether there exists a faithful homomorphism $\phi \colon \Gamma \to \Homeo(X)$, where $X = \real$ or~$S^1$. (Recall that a homomorphism is \emph{faithful} if its kernel is trivial.)
\end{mainques}

A fundamental theorem in the subject tells us that the two seemingly different questions in \cref{MainQues} are actually the same for most arithmetic groups (if, as is usual in Geometric Group Theory, we ignore the very minor difference between a group and its finite-index subgroups):

\begin{thm}[Ghys \cite{GhysCercle}, Burger-Monod \cite{BurgerMonod-BddCohoLatts}] \label{GhysFP}
Let\/ $\Gamma = \SL(n,\integer)$, or some other irreducible arithmetic group, such that no finite-index subgroup of\/~$\Gamma$ is isomorphic to a subgroup of\/ $\SL(2,\real)$. Then:
	\begin{align*}
	\text{some } &\text{finite-index subgroup of\/~$\Gamma$ has a faithful action on\/~$\real$}
	\\& \iff \text{ some finite-index subgroup of\/~$\Gamma$ has a faithful action on~$S^1$.}
	\end{align*}
\end{thm}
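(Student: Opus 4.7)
The plan splits into the easy and hard directions. For $\real \Rightarrow S^1$, I would simply extend a faithful action of a finite-index $\Gamma' \leq \Gamma$ on~$\real$ to a faithful action on $S^1 = \real \cup \{\infty\}$ by fixing the point at infinity.

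For the hard direction $S^1 \Rightarrow \real$, assume a finite-index $\Gamma' \leq \Gamma$ acts faithfully on~$S^1$; after replacing $\Gamma'$ by its orientation-preserving subgroup (of index at most~$2$), I may assume the action lies in $\Homeo_+(S^1)$. The classical obstruction to lifting this action along the universal cover $\real \to S^1$ is the Euler class $e \in H^2(\Gamma', \integer)$; equivalently, $e$ classifies the central extension
\[ 1 \to \integer \to \widetilde{\Gamma'} \to \Gamma' \to 1 \]
pulled back from the universal cover of $\Homeo_+(S^1)$. A splitting of this extension over a finite-index subgroup $\Gamma'' \leq \Gamma'$ produces the desired action $\Gamma'' \to \Homeo(\real)$, and any such lift is automatically faithful because its kernel injects into the kernel of the $S^1$-action. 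So the task reduces to producing a finite-index $\Gamma''$ with $e|_{\Gamma''} = 0$.

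The Euler class always admits a bounded cocycle representative (for instance the winding-number/translation-number cocycle), so it lifts to a bounded Euler class $e_b \in H^2_b(\Gamma', \integer)$. At this point I would invoke the Burger--Monod theory: the hypothesis ruling out $\SL(2, \real)$-type quotients places $\Gamma'$ in the higher-rank regime, for which $H^2_b(\Gamma', \real) = 0$---the ``K\"ahler-type'' classes coming from hyperbolic-plane factors, which are the only obstacles, being exactly what the hypothesis forbids. Hence $e_b$ maps to zero in $H^2_b(\Gamma', \real)$; chasing the commutative square relating integer/real and bounded/unbounded second cohomology then shows that $e$ maps to zero in $H^2(\Gamma', \real)$, i.e., $e$ is a torsion class. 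Since $\Gamma'$ is arithmetic, the torsion of $H^2(\Gamma', \integer)$ is finite, and residual finiteness via the congruence tower lets me choose $\Gamma''$ on which this torsion class restricts trivially.

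The hard part will be the Burger--Monod vanishing $H^2_b(\Gamma', \real) = 0$: this is the substantive analytic input and requires the full theory of continuous bounded cohomology of semisimple Lie groups, in particular the identification of $H^2_b$ of a higher-rank lattice with the continuous bounded cohomology of the ambient Lie group, and the computation of the latter. Everything else---the Euler-class formalism, boundedness of the winding-number cocycle, faithfulness of the lift, and the passage from torsion to vanishing on a congruence subgroup---is formal once that input is granted.
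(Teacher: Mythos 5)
Your forward direction is the same as the paper's (one-point compactification). For the converse you take a genuinely different and more ambitious route: the paper simply quotes the Ghys/Burger--Monod finite-orbit theorem as a black box, passes to a finite-index subgroup fixing a point $p\in S^1$, and lets it act on $S^1\smallsetminus\{p\}\cong\real$; you instead try to unwind that black box through the Euler class. Your unwinding has a genuine gap at the final step. From the vanishing of the image of $e$ in $H^2(\Gamma';\real)$ you may indeed conclude that $e$ is torsion in $H^2(\Gamma';\integer)$, but the claim that ``residual finiteness via the congruence tower'' then kills $e$ on a finite-index subgroup is unjustified, and the underlying principle is false: restriction to a finite-index subgroup need not annihilate a torsion class in degree~$2$. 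A standard counterexample lives exactly in this setting --- the metaplectic extension gives a $2$-torsion class in $H^2(\Sp(2n,\integer);\integer)$ whose restriction to \emph{every} finite-index subgroup is nonzero. The correct way to finish is to stay in the bounded theory: the exact sequence $\Hb^1(\Gamma';\torus)\to \Hb^2(\Gamma';\integer)\to \Hb^2(\Gamma';\real)$ exhibits the \emph{bounded} Euler class as $\delta_1\alpha$ for a homomorphism $\alpha\colon\Gamma'\to\torus$; since $H^1(\Gamma';\real)=0$ and $\Gamma'$ is finitely generated, $\alpha$ has finite image and dies on a finite-index subgroup $\Gamma''$, where the bounded (hence also the ordinary) Euler class vanishes --- and \cref{BddEulerClassFP} then even hands you a fixed point, reconnecting with the paper's argument. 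This is precisely \cref{H2RVanishFP}.

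A secondary problem is the analytic input itself: the vanishing $\Hb^2(\Gamma';\real)=0$ that you posit is stronger than what Burger--Monod prove, and it actually fails for irreducible arithmetic subgroups of higher-rank \emph{Hermitian} groups such as $\Sp(4,\real)$, which carry nonzero bounded K\"ahler classes yet are still covered by \cref{GhysFP}. What Burger--Monod establish is injectivity of the comparison map $\Hb^2(\Gamma';\real)\to H^2(\Gamma';\real)$ together with an identification of $\Hb^2$ of the lattice with the continuous bounded cohomology of the ambient group; disposing of the surviving K\"ahler classes requires a further argument. So even granting your black box, the proof as written covers only those $\Gamma$ for which $\Hb^2(\Gamma';\real)$ genuinely vanishes, whereas the paper's appeal to the finite-orbit theorem covers the full statement.
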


\begin{proof}
($\Rightarrow$) Suppose $\dot\Gamma$ is a finite-index subgroup of~$\Gamma$ that acts on~$\real$.
Then $\dot\Gamma$ also acts on the one-point compactification of~$\real$, which is homeomorphic to~$S^1$. (Note that this argument is elementary and very general. It is the opposite direction of the theorem that requires assumptions on~$\Gamma$, and sometimes requires passage to a finite-index subgroup.)

($\Leftarrow$)
Suppose $\dot\Gamma$ is a finite-index subgroup of~$\Gamma$ that acts on~$S^1$.
A major theorem proved independently by Ghys \cite{GhysCercle} and Burger-Monod \cite{BurgerMonod-BddCohoLatts} tells us that that the action must have a finite orbit. (We will say a bit about the proof of this theorem in \cref{AmenLect,BddCohoLect}.) This means that a finite-index subgroup $\ddot\Gamma$ of~$\dot\Gamma$ has a fixed point in~$S^1$. 

Let $p$ be a point in~$S^1$ that is fixed by~$\ddot\Gamma$. Then $\{p\}$ is a $\ddot\Gamma$-invariant subset, so its complement is also invariant. This implies that $\ddot\Gamma$ acts on $S^1 \smallsetminus \{p\}$, which is homeomorphic to~$\real$.
\end{proof} 

Thus, in most cases, it does not matter which of the two versions of \cref{MainQues} we consider. For now, let us look at actions on~$\real$.

\begin{assump}
To avoid minor complications, we will assume, henceforth, that 
	$$ \text{all actions are orientation-preserving.} $$
This means that an action of~$\Gamma$ on~$X$ is a faithful homomorphism $\phi  \colon \Gamma  \to \Homeo_+(X)$, where $\Homeo_+(X)$ is the group of \emph{orientation-preserving} homeomorphisms of~$X$. Since $\Homeo_+(X)$ is a subgroup of index~$2$ in the group of all homeomorphisms, this is just another example of ignoring the difference between a group and its finite-index subgroups.
\end{assump}

\begin{rem}
The expository paper \cite{Morris-CanLatt} covers the main topics of these lectures in somewhat more depth.
See \cite{GhysCircleSurvey} and \cite{Navas-GrpsDiffeos} for introductions to the general theory of group actions on the circle (not just actions of arithmetic groups), and see \cite{Morris-IntroArithGrps} for an introduction to arithmetic groups.
\end{rem}

\section{Examples}

The following result provides an obstruction to the existence of an action on~$\real$.

\begin{lem} \label{TorsionNoAct}
If a group has a nontrivial element of finite order, then the group does not have a faithful action on\/~$\real$.
\end{lem}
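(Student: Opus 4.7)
The plan is to prove the contrapositive: assume $\Gamma$ has a faithful (orientation-preserving) action on $\real$, and show that every nontrivial element of $\Gamma$ has infinite order. So let $g \in \Gamma$ be an element of finite order $n \ge 1$, and set $f = \phi(g) \in \Homeo_+(\real)$, so $f^n = \Id$. The goal is to conclude $f = \Id$, hence (by faithfulness) $g = e$.

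The key observation is that an orientation-preserving homeomorphism of~$\real$ is the same as a strictly increasing continuous bijection $\real \to \real$, and such maps are closed under composition while preserving the order relation. In particular, if $x < y$ then $f(x) < f(y)$, and by induction $f^k(x) < f^k(y)$ for every $k \ge 1$.

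Now suppose for contradiction that $f \ne \Id$, so some point $x \in \real$ satisfies $f(x) \ne x$. Without loss of generality $f(x) > x$ (the other case is symmetric, obtained by working with $f^{-1}$ or by reversing inequalities). Applying the monotone map $f$ to the inequality $x < f(x)$ yields $f(x) < f^2(x)$, and iterating gives the strictly increasing chain
\[
x < f(x) < f^2(x) < \cdots < f^n(x).
\]
In particular $f^n(x) > x$, contradicting $f^n = \Id$. This contradiction forces $f = \Id$, completing the proof.

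There is no real obstacle here; the only thing to be a little careful about is the reduction to the monotone case, which is exactly where the orientation-preserving assumption (introduced in the standing hypothesis just above the lemma) is used. Without that assumption one would only get that $f^2$ is order-preserving and the argument would apply to $f^2$ instead of $f$, still forcing the torsion element to be trivial after replacing $g$ by $g^2$ — but the present setup lets us avoid that minor detour entirely.
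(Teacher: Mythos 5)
Your proof is correct and follows essentially the same route as the paper's: pick a point moved by the homeomorphism, use that an orientation-preserving homeomorphism of $\real$ is strictly increasing, and iterate to get a strictly monotone orbit contradicting finite order. The closing remark about how to dispense with the orientation-preserving assumption by passing to $f^2$ is a correct (and worthwhile) observation, but the core argument is the one in the paper.
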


\begin{proof} 
It suffices to show that every nontrivial element~$\varphi$ of $\Homeo_+(\real)$ has infinite order. 

Since $\varphi$ is nontrivial, there is some $p \in \real$, such that $\varphi(p) \neq p$. Assume, without loss of generality, that $\varphi(p) > p$. The fact that $\varphi$ is an orientation-preserving homeomorphism of~$\real$ implies that it is an increasing function:
	$$ x > y \implies \varphi(x) > \varphi(y) .$$
Therefore (letting $x = \varphi(p)$ and $y = p$), we have $\varphi^2(p) > \varphi(p)$. In fact, by induction, we have
	$$ \varphi^n(p) > \varphi^{n-1}(p) > \cdots > \varphi(p) > p ,$$
so $\varphi^n(p) > p$ for every $n > 0$. This implies $\varphi^n(p) \neq p$, so $\varphi$ is not the identity map. Since $n$~is arbitrary, this means that $\varphi$ has infinite order.
\end{proof}

\begin{cor} \label{SLnZNoActBcsTorsion}
If $n \ge 2$, then $\SL(n,\integer)$ does not have a faithful action on~$\real$. 
\end{cor}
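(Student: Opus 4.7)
The plan is to invoke \cref{TorsionNoAct} and reduce the problem to finding a single nontrivial torsion element in $\SL(n,\integer)$ for every $n \ge 2$.

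First, I would exhibit an explicit element of finite order. For any $n \ge 2$, consider the block-diagonal matrix
\[
J_n = \begin{pmatrix} 0 & -1 & & \\ 1 & 0 & & \\ & & I_{n-2} & \end{pmatrix} \in \SL(n,\integer),
\]
that is, the matrix obtained by embedding the $90^\circ$ rotation $\begin{pmatrix} 0 & -1 \\ 1 & 0 \end{pmatrix}$ in the upper-left $2 \times 2$ block and putting $1$'s on the remaining diagonal. A quick check shows $\det J_n = 1$ (so $J_n \in \SL(n,\integer)$), that $J_n^4 = I_n$ (so $J_n$ has finite order), and that $J_n \neq I_n$ (so it is nontrivial).

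Having produced such an element, the corollary follows immediately from \cref{TorsionNoAct}: since $\SL(n,\integer)$ contains the nontrivial finite-order element $J_n$, it cannot act faithfully on $\real$.

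There is essentially no obstacle here; the only small thing to verify is that one really can find a torsion element in $\SL(n,\integer)$ for every $n \ge 2$, and the matrix $J_n$ above (or, alternatively, $\operatorname{diag}(-1,-1,1,\dots,1)$, which has order $2$ and determinant $1$) does the job uniformly in~$n$.
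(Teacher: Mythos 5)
Your proposal is correct and takes essentially the same approach as the paper: exhibit a nontrivial finite-order element of $\SL(n,\integer)$ (the paper uses $\mathrm{diag}(-1,-1)$ in $\SL(2,\integer)$, implicitly embedded in the upper-left block for larger~$n$) and apply \cref{TorsionNoAct}. Your explicit uniform construction of $J_n$ is just a slightly more careful version of the same argument.
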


\begin{proof}
It is easy to find a nontrivial element of finite order in $\SL(n,\integer)$. For example, the matrix {\smaller$\begin{bmatrix} -1 & 0 \\ 0 & -1 \end{bmatrix}$} is in $\SL(2,\integer)$ and has order~$2$.
\end{proof}


It is not difficult to show that every arithmetic group has a finite-index subgroup that has no elements of finite order \cite[Lem.~4.19, p.~232]{PlatonovRapinchukBook}. This means that \cref{TorsionNoAct} does not provide any obstruction at all to the existence of actions of sufficiently small finite-index subgroups of~$\Gamma$. For example:

\begin{eg}
It is well known that some finite-index subgroups of $\SL(2,\integer)$ are free groups. (In fact, every torsion-free subgroup is free \cite[Eg.~1.5.3, p.~11, and Prop.~18, p.~36]{Serre-Trees}.) Any such subgroup has \emph{many} faithful actions on~$\real$:
\end{eg}

\begin{exer} \label{FreeGrpActsOnR}
Show that every finitely generated free group has a faithful action on~$\real$.
\end{exer}

Here is a much less trivial class of arithmetic groups that act on~$\real$:

\begin{thm}[{}{Agol and Boyer-Rolfsen-Wiest}] \label{ArithSL2CActs}
If\/ $\Gamma$ is any arithmetic subgroup of $\SL(2,\complex)$, then some finite-index subgroup of\/~$\Gamma$ has a faithful action on\/~$\real$.
\end{thm}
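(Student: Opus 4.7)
The plan is to exploit the geometric meaning of ``arithmetic subgroup of $\SL(2,\complex)$''. Via the isomorphism $\PSL(2,\complex) \cong \Isom^+(\hyper^3)$, such a group is (essentially) a lattice acting on hyperbolic 3\nobreakdash-space. After killing the finite center $\{\pm I\}$ and passing to a torsion-free finite-index subgroup (which exists by Selberg's lemma for arithmetic groups), we may assume $\Gamma$ is the fundamental group of a finite-volume orientable hyperbolic 3\nobreakdash-manifold~$M$. The goal is then to produce a finite-index subgroup that is \emph{left-orderable}, because the standard equivalence being developed in this lecture (countable left-orderable groups act faithfully on~$\real$ by orientation-preserving homeomorphisms) will immediately give the conclusion.

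The key substantive input is Agol's virtual fibering theorem: every finite-volume hyperbolic 3\nobreakdash-manifold admits a finite cover that fibers over the circle. Translated to the fundamental-group level, we obtain a finite-index subgroup $\Gamma_1 \le \Gamma$ fitting in a short exact sequence
\[
  1 \to \pi_1(F) \to \Gamma_1 \to \integer \to 1,
\]
where $F$ is the fiber surface. From here the argument becomes elementary: if $F$ has nonempty boundary (the cusped case), then $\pi_1(F)$ is free and hence left-orderable; if $F$ is a closed orientable surface of positive genus, then $\pi_1(F)$ is a classical example of a (bi)orderable surface group. Since $\integer$ is also left-orderable, and any extension of a left-orderable group by a left-orderable group is left-orderable via the lexicographic combination of the two orders, we conclude that $\Gamma_1$ is left-orderable, as desired.

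The hard part is clearly Agol's virtual fibering theorem, which is a very deep result resting on Wise's theory of special cube complexes, the Kahn--Markovic surface subgroup theorem, and Agol's own RFRS/fibering criterion. By contrast, Selberg's lemma, left-orderability of surface groups, the lexicographic extension construction, and the dynamical realization of a countable left order on~$\real$ are all standard and fit naturally into the framework of this lecture. (The Boyer--Rolfsen--Wiest contribution, historically, is the observation that left-orderability of fibered 3\nobreakdash-manifold groups follows cleanly from the extension structure, so that Agol's theorem can be plugged in as a black box.)
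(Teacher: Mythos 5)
Your proof is correct, but it takes a genuinely different route from the one in the text. The paper quotes only the weaker consequence of Agol's work --- that some finite-index subgroup $\dot\Gamma$ surjects onto~$\integer$, i.e., virtual positive first Betti number --- and then must upgrade the single nonvanishing $H^1(\dot\Gamma;\real)\neq 0$ to a faithful action; this is done via the Burns--Hale criterion \fullcsee{LOExers}{BurnsHale} together with the Boyer--Rolfsen--Wiest theorem, which uses $3$-manifold topology and an Euler characteristic argument to show that \emph{every} nontrivial finitely generated subgroup~$\Lambda$ of~$\dot\Gamma$ has $H^1(\Lambda;\real)\neq 0$. You instead invoke the stronger virtual \emph{fibering} theorem, which hands you the extension $1\to\pi_1(F)\to\Gamma_1\to\integer\to 1$ directly; from there the promotion to left-orderability is elementary (surface groups are left-orderable, extensions of left-orderable by left-orderable are left-orderable by \fullcref{LOExers}{extension}, and \cref{ActIffLO} converts the order into a faithful action). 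So you front-load all of the difficulty into a stronger black box --- needing the RFRS/fibering criterion on top of virtual specialness --- and in exchange avoid Burns--Hale and Boyer--Rolfsen--Wiest entirely, while the paper gets by with less from Agol but needs the BRW promotion step. One small historical correction: the Boyer--Rolfsen--Wiest contribution used here is precisely that promotion of $H^1\neq 0$ to all nontrivial finitely generated subgroups (local indicability), not the observation that fibered $3$-manifold groups are left-orderable; the latter is the elementary extension argument you gave yourself.
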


\begin{proof}
A very recent and very important theorem of Agol \cite{AgolHaken} tells us there is a finite-index subgroup~$\dot\Gamma$ of~$\Gamma$, such that there is a surjective homomorphism $\varphi \colon \dot\Gamma  \onto \integer$. Since $\integer$ has an obvious nontrivial action on~$\real$ (by translations), this implies that $\dot\Gamma$ also acts nontrivially on~$\real$ (by translations). However, additional effort is required to obtain an action that is faithful.

A classic theorem of Burns-Hale \cite{BurnsHale} provides a cohomological condition that implies the existence of a faithful action:
	$$ \begin{matrix} 
	\text{$\dot\Gamma$ has a faithful action on~$\real$ if $H^1(\Lambda; \real)$ is nonzero}
	\\ \text{for every finitely generated, nontrivial subgroup~$\Lambda$ of~$\dot\Gamma$}
	\end{matrix} $$
(see \fullcref{LOExers}{BurnsHale}). Agol's theorem tells us $H^1(\dot\Gamma; \real)$ is nonzero, which establishes the hypothesis for the special case where $\Lambda = \dot\Gamma$. By using $3$-manifold topology and a fairly simple argument about Euler characteristics, a theorem of Boyer-Rolfsen-Wiest \cite[Thms.~3.1 and 3.2]{BoyerRolfsenWiest} promotes this nonvanishing to obtain the condition for all~$\Lambda$, and thereby yields a faithful action on~$\real$. 
\end{proof}

\begin{egs} \label{EgNotAct}
We have seen that some finite-index subgroups of $\SL(2,\integer)$ have actions on~$\real$. To obtain arithmetic groups that do \emph{not} act on~$\real$ (even after passing to a finite-index subgroup), we need a bigger group.
	\begin{enumerate}
	\item  \label{EgNotAct-SLn}
	One approach would be to take larger matrices (not just $2 \times 2$). Later in this lecture, we will see that this works: if $n \ge 3$, then no finite-index subgroup of $\SL(n,\integer)$ has a faithful action on~$\real$. 
	\item  \label{EgNotAct-SL2O}
	Another possible approach would be to keep the same size of matrix, but enlarge the ring of coefficients: instead of only the ordinary ring of integers~$\integer$, consider a ring a algebraic integers~$\mathcal{O}$. \Cref{BddGenLect} outlines a proof that this approach also works: if $\alpha$ is a real algebraic integer that is irrational (for example, we could take $\alpha = \sqrt{2}$), then no finite-index subgroup of $\SL \bigl( 2, \integer[\alpha] \bigr)$ acts faithfully on~$\real$.
	\end{enumerate}
\end{egs}

\section{The main conjecture} \label{LOMainConjSect}

In the spirit of \cref{EgNotAct}, it is conjectured that every ``irreducible'' arithmetic group that acts on~$\real$ is contained in a very small Lie group, like $\SL(2,\complex)$:

\begin{conj} \label{NoActConj}
If\/ $\Gamma$ is an ``irreducible'' arithmetic group, then
	$$ \text{$\Gamma$ does not have a faithful action on\/~$\real$} $$
unless $\Gamma$ is an arithmetic subgroup of a ``very small'' Lie group.
\end{conj}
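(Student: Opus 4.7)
My plan is to exploit the equivalence (central to this lecture) between faithful orientation-preserving actions on~$\real$ and left-orderability of the group. Once that reduction is in place, the conjecture becomes: every ``irreducible'' arithmetic group $\Gamma$ is not left-orderable unless it sits inside a ``very small'' Lie group. I would first pin down the vague terms by invoking the Margulis arithmeticity theorem: $\Gamma$ is commensurable to a lattice in a semisimple real Lie group~$G$, and ``very small'' should be read as saying every noncompact simple factor of~$G$ is locally isomorphic to $\SL(2,\real)$ or $\SL(2,\complex)$, so that \cref{ArithSL2CActs} is not in conflict with the statement.

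The higher-rank case---where some noncompact simple factor of~$G$ has real rank at least~$2$, or where $\Gamma$ is a higher-rank $S$-arithmetic group like $\SL \bigl( 2, \integer[\alpha] \bigr)$---I would attack via bounded generation, following the strategy hinted at in \cref{BddGenLect}. The key input is that such~$\Gamma$ is a bounded product of root-unipotent subgroups, and the unipotent commutation relations, together with the trichotomy forced on each generator by a left-invariant order, yield finitely many sign inequalities whose simultaneous solution forces the identity into a nontrivial product, contradicting the order. This should handle essentially all cases except for rank-one lattices.

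For rank-one lattices outside the ``very small'' list, the argument splits by the ambient group. When $G$ is $\Sp(n,1)$ or $F_4^{-20}$, Kazhdan's property~(T) forces every abelian quotient of every finite-index subgroup to be finite, so the Burns-Hale ingredient used in the proof of \cref{ArithSL2CActs} fails as strongly as possible; I would try to upgrade this failure into a genuine left-ordering obstruction by showing every finitely generated subgroup has torsion abelianization. When $G = \SU(n,1)$ with $n \ge 2$ there is no property~(T), but one still has vanishing of suitable second bounded cohomology, so the Burger-Monod machinery of \cref{BddCohoLect} should force every $\Gamma$-action on~$S^1$ (and hence on~$\real$, after one-point compactification) to have a finite orbit, reducing the problem again to a question about finite abelianization on all finite-index subgroups.

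The main obstacle, I expect, is the case of arithmetic lattices in $\SO(n,1)$ for $n \ge 3$. Many such lattices are known to virtually surject onto~$\integer$, and the Agol / Boyer-Rolfsen-Wiest route of \cref{ArithSL2CActs} might well promote this to a faithful action on~$\real$, in which case the conjecture would have to be refined so that ``very small'' encompasses the entire $\SO(n,1)$ family. Deciding between ``prove'' and ``restate'' here is the crux: it will require either a new cohomological obstruction that is sensitive to the arithmeticity of~$\Gamma$ (and not merely to its ambient Lie group), or else a uniform virtual-fibering construction showing that the $\SL(2,\complex)$ phenomenon persists in all real hyperbolic dimensions.
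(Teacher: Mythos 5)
This statement is a \emph{conjecture}: the paper offers no proof of it, and what you have written is a research program whose essential steps are precisely the parts that remain open. Before anything else, note that you have misread the exceptional clause. In \cref{LOMainConjSect} the paper explains that ``very small'' means \emph{real rank one}, so the entire families $\SO(1,n)$, $\SU(1,n)$, $\Sp(1,n)$ and $F_{4,1}$ are excluded from the conjecture's claim. Your third and fourth paragraphs therefore attack cases about which the conjecture asserts nothing, and your worry that the $\SO(n,1)$ family might force a ``restatement'' is moot --- the paper already places it outside the scope of the conjecture. (Those paragraphs also contain independent errors: Burns--Hale is only a \emph{sufficient} condition for left-orderability, so showing that every finitely generated subgroup has finite abelianization obstructs nothing; the Burger--Monod vanishing of $\Hb^2$ is a higher-rank phenomenon and does not apply to lattices in $\SU(n,1)$; and whether an infinite Kazhdan group can act faithfully on~$\real$ is itself listed as an open problem in the paper.)

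The substantive gap is in your claim that bounded generation ``should handle essentially all cases except for rank-one lattices.'' First, cocompact higher-rank lattices contain no nontrivial unipotent elements at all, so the bounded-generation-by-unipotents strategy cannot even begin there; this is exactly \cref{CocpctOpen}, which the paper flags as open. Second, even in the noncocompact higher-rank case, bounded generation by unipotent subgroups is not a known fact but is \cref{RapinchukConj}, and the remaining cases after the reduction of Chernousov--Lifschitz--Morris constitute \cref{LattSL3NoAct}, also open. Third, your description of how bounded generation yields a contradiction (``sign inequalities \dots force the identity into a nontrivial product'') is not how the argument of \cref{BddGenLect} works and would not suffice: the proof of \cref{SL2ANoAct} needs the genuinely dynamical input of \cref{BddOrbs} (every unipotent orbit on~$\real$ is bounded), which is a theorem of Lifschitz--Morris with its own nontrivial proof, combined with \cref{SL2OBddGen} to conclude that every $\Gamma$-orbit is bounded and hence that there is a fixed point. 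Purely order-theoretic manipulation of commutation relations, in the style of the Heisenberg argument of \cref{LOLect}, is exactly what fails for $\SL\bigl(2,\integer[\alpha]\bigr)$, since its nilpotent subgroups are abelian. So the proposal does not prove the conjecture; it reproduces the known partial results in outline while leaving the open steps unaddressed.
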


For the interested reader, the remainder of this \namecref{LOMainConjSect} makes the conjecture more precise. However, we will only look at examples of arithmetic groups, not delving deeply into their theory, so, for our purposes, a vague understanding of the conjecture is entirely sufficient.

\begin{defn}
Saying that $\Gamma$ is \emph{irreducible} means that no finite-index subgroup of~$\Gamma$ is a direct product $\Gamma_1 \times \Gamma_2$ (where $\Gamma_1$ and~$\Gamma_2$ are infinite). 
\end{defn}

The following simple observation shows that the problem reduces to this case.

\begin{exer} \label{DirProdFaithful}
Show that the direct product $\Gamma_1 \times \Gamma_2$ has a faithful action on~$\real$ if and only if $\Gamma_1$ and~$\Gamma_2$ both have faithful actions on~$\real$. 
\end{exer}

Technically speaking, instead of saying that the Lie group is ``very small\zz,'' we should say that it is a semisimple Lie group whose ``real rank'' is only~$1$. In other words, up to finite index, it belongs to one of the four following families of groups (up to local isomorphism):
	\begin{itemize}
	\item $\SO(1,n)$ (the isometry group of hyperbolic $n$-space $\hyper^n$),
	or
	\item $\SU(1,n)$ (the isometry group of complex hyperbolic $n$-space),
	or
	\item $\Sp(1,n)$ (the isometry group of quaternionic hyperbolic $n$-space),
	or
	\item $F_{4,1}$ (the isometry group of the hyperbolic plane over the octonions, also known as the  ``Cayley plane").
	\end{itemize}
Since $\SL(2,\complex)$ is locally isomorphic to $\SO(1,3)$, this list does include the examples in \cref{ArithSL2CActs}.

\begin{rem}
\Cref{NoActConj} applies only to actions on~$\real$, not actions on~$S^1$, because some arithmetic groups of large real rank do act on the circle.
Namely, if $G$ is a semisimple Lie group that has $\SL(2,\real)$ as one of its simple factors, then every arithmetic subgroup of~$G$ acts on the circle (by linear-fractional transformations). However, it is conjectured that these are the only such arithmetic groups of large real rank \cite[p.~200]{GhysCercle}.
\end{rem}

\section{Left-invariant total orders}

The following \lcnamecref{ActIffLO} translates \cref{NoActConj} into a purely algebraic question about the existence of a certain structure on the group~$\Gamma$.

\begin{defn}
Let $\Gamma$ be a group.
\noprelistbreak
	\begin{itemize}
	\item A \emph{total order} on a set~$\Omega$ is an transitive, antisymmetric binary relation~$\prec$ on~$\Omega$, such that, for all $a,b \in \Omega$, we have
	$$ \text{either \ $a \prec b$ \ or \ $a \succ b$ \ or \ $a = b$} .$$
	\item When $\prec$ is a total order on a group~$\Gamma$, we can ask that the order structure be compatible with the group multiplication: $\prec$ is \emph{left-invariant} if, for all $a,b,c \in \Gamma$, we have
		$$ a \prec b \iff ca \prec cb .$$
	\end{itemize}
See \cite{KopytovMedvedev} for more about the theory of left-invariant total orders.
\end{defn}

\begin{exer} \label{ActIffLO}
Let $\Gamma$ be a countable group. Then
	$$
	\text{$\Gamma$ has a faithful action on~$\real$
	$\iff$ $\exists$ a left-invariant total order~$\prec$ on~$\Gamma$.} $$
\end{exer}

\begin{proof}[Hint]
 ($\Rightarrow$) If no nontrivial element of~$\Gamma$ fixes~$0$, then we may define
 	$$ a \prec b \iff a(0) < b(0) ,$$
and this is a left-invariant total order. (Recall that each element of~$\Gamma$ acts on~$\real$ via in increasing function, so if $a(0) < b(0)$, then $c \bigl( a(0) \bigr) < c \bigl( b(0) \bigr)$. If $a(0) = b(0)$, the tie can be broken by choosing some other $p \in \real$ and comparing $a(p)$ with $b(p)$.
 
 ($\Leftarrow$) Note that $\Gamma$ acts faithfully (by left translation) by automorphisms of the ordered set $(\Gamma,\prec)$, which is isomorphic (as an ordered set) to a subset of $(\rational,<)$. If it is isomorphic to all of $(\rational,<)$, then $\Gamma$ acts on the Dedekind completion, which is homeomorphic to~$\real$. There is actually no loss of generality in assuming that $(\Gamma,\prec) \iso (\rational,<)$, because $(\Gamma, \prec)$ can be replaced with a left-invariant ordering of $\Gamma \times \rational$ that is order-isomorphic to $(\rational, <)$.
\end{proof}

Therefore, \cref{NoActConj} can be restated as follows:

\begin{conj}[Algebraic version of the conjecture]
 If\/ $\Gamma$ is an irreducible arithmetic group, then
	$$ \text{$\Gamma$ does not have a left-invariant total order} $$
unless $\Gamma$ is an arithmetic subgroup of a ``very small'' Lie group.
\end{conj}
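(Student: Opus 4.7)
The plan is to derive the algebraic version of the conjecture as an immediate reformulation of \cref{NoActConj} via \cref{ActIffLO}, rather than as a separately substantive statement. First I would note that arithmetic groups are countable: they embed in $\SL(n,\overline{\rational})$ for some~$n$, and in fact, by Borel--Harish-Chandra, any arithmetic group is even finitely generated. This puts every candidate~$\Gamma$ inside the hypothesis of \cref{ActIffLO}.

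Once countability is in hand, \cref{ActIffLO} supplies the equivalence
$$ \text{$\Gamma$ has a faithful action on~$\real$} \iff \text{$\exists$ a left-invariant total order on~$\Gamma$.} $$
Taking contrapositives converts the non-existence statement in \cref{NoActConj} into the non-existence of a left-invariant total order, which is exactly the conclusion of the algebraic version. The hypotheses ``$\Gamma$ is irreducible arithmetic'' and ``unless $\Gamma$ is an arithmetic subgroup of a very small Lie group'' transport over verbatim, since they refer only to the group~$\Gamma$, not to the action.

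The content is therefore entirely in \cref{ActIffLO}, whose hint already sketches both directions: for ($\Rightarrow$), after arranging that no nontrivial element of~$\Gamma$ fixes~$0$ (one can pass to a point whose stabilizer is trivial, or break ties by comparing $a(p)$ with~$b(p)$ for auxiliary points~$p$), define $a \prec b \iff a(0) < b(0)$, and use that elements of $\Homeo_+(\real)$ are increasing to check left-invariance; for ($\Leftarrow$), embed $(\Gamma,\prec)$ into $(\rational,<)$ (possibly after passing to a left-invariant order on $\Gamma \times \rational$), let $\Gamma$ act by left translation, and extend to the Dedekind completion to obtain an action on~$\real$.

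The main obstacle here is not the restatement, which is routine, but the underlying conjecture itself, which is open. In particular, any genuine progress would require producing, for an irreducible arithmetic~$\Gamma$ of higher real rank, a group-theoretic obstruction to the existence of any left-invariant total order; the subsequent lectures do this for specific families (e.g.\ $\SL(3,\integer)$ via identifying torsion-like obstructions to left-orderability, and $\SL(2,\integer[\alpha])$ via bounded generation), but a uniform proof for all irreducible arithmetic groups of real rank at least~$2$ is still out of reach.
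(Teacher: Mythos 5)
Your proposal matches the paper exactly: the ``algebraic version'' is presented there only as a restatement of \cref{NoActConj} obtained by applying the equivalence of \cref{ActIffLO} (arithmetic groups being countable), not as a theorem with an independent proof. You correctly identify that the substantive content is the open conjecture itself, so there is nothing further to check.
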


\begin{exer} \label{ProdPos}
Suppose $\prec$ is a left-invariant total order on~$\Gamma$ (and $e$ is the identity element of~$\Gamma$). Show that if $a,b \in \Gamma$ with $a,b \succ e$, then $ab \succ e$ and $a^{-1} \prec e$.
\end{exer}

\section{$\SL(3,\integer)$ does not act on the line}

We can now prove \fullcref{EgNotAct}{SLn}:

\begin{thm}[Witte \cite{Witte-QrankAct1mfld}] \label{SLnZNotLO}
If\/ $\Gamma$ is a finite-index subgroup of\/ $\SL(n,\integer)$, with $n \ge 3$, then there does not exist a left-invariant total order on\/~$\Gamma$.
\end{thm}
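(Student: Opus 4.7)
By \cref{ActIffLO} (combined with \cref{ProdPos}), the conclusion is equivalent to the assertion that $\Gamma$ admits no positive cone, i.e., no subset $P \subseteq \Gamma \setminus \{e\}$ that is closed under multiplication and satisfies $\Gamma \setminus \{e\} = P \sqcup P^{-1}$. I would argue by contradiction, supposing such a $P$ exists. A preliminary reduction handles $n > 3$: the embedding $\SL(3,\integer) \hookrightarrow \SL(n,\integer)$ meets any finite-index subgroup in a finite-index subgroup of $\SL(3,\integer)$, to which the order restricts, so it suffices to treat $n=3$. Moreover, $\Gamma$ contains a principal congruence subgroup $\Gamma(N)$ for some $N \geq 1$, and since a left-invariant order restricts to subgroups, I may assume $\Gamma = \Gamma(N)$. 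This group contains the elementary matrices $u_{ij} := E_{ij}(N)$ for $i \neq j$ in $\{1,2,3\}$, which satisfy the Steinberg commutator relations $[u_{ij}, u_{jk}] = u_{ik}^{N}$ for distinct $i,j,k$, and $[u_{ij}, u_{kl}] = e$ when the index sets are disjoint.

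The plan is then to exhibit a short word $w$ in the $u_{ij}^{\pm 1}$ such that: (i) $w = e$ in $\Gamma$; and (ii) regardless of which sign each $u_{ij}$ carries with respect to $\prec$, applying left-invariance and closure of $P$ under multiplication forces $w \in P$. Together, (i) and (ii) force $e \in P$, the desired contradiction. The natural source for such an identity is the combination of the Steinberg commutator relations above with conjugation by an element of $\Gamma$ that realizes, modulo unipotent corrections, a nontrivial Weyl-group permutation of the six root subgroups. Such a conjugation sends a generator like $u_{12}$ to an expression that, when combined with an identity such as $[u_{12}, u_{23}] = u_{13}^{N}$, yields the targeted equation $w = e$ in which the factors on the left must all lie in $P$ once the signs have been adjusted.

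The main obstacle is writing such a word down explicitly and verifying both (i) and (ii), since one cannot use genuine Weyl-group elements (they have finite order, so they lie outside the torsion-free $\Gamma(N)$) and must instead approximate them by products of elementary matrices that introduce extra commutator terms. The fact that this strategy can succeed only for $n \geq 3$, not for $\SL(2,\integer)$, is precisely the appearance of three interacting simple root subgroups and the resulting Steinberg commutator $[u_{ij}, u_{jk}] = u_{ik}^{N}$, which has no analogue in $\SL(2)$. This is in keeping with the known left-orderability of finite-index free subgroups of $\SL(2,\integer)$ and explains why the higher-rank algebraic structure of $\SL(3,\integer)$ is what ultimately destroys every candidate positive cone.
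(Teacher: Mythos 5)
Your preliminary reductions are fine and match what the paper relegates to an exercise: restricting to $n=3$ via the top-left embedding, and to a congruence subgroup, both work because a left-invariant order restricts to subgroups \fullcsee{LOExers}{subgrp}. The reformulation in terms of a positive cone is also a standard equivalence. But the core of your argument is missing: you never produce the word $w$, and you yourself flag its construction as ``the main obstacle.'' More importantly, the mechanism you propose --- a \emph{fixed, short} word in the signed elementary generators, forced into $P$ by the Steinberg relations together with approximate Weyl-group conjugations --- is not the obstruction that actually kills orderability here. Any identity among the $u_{ij}^{\pm1}$ that lives inside a single unipotent (hence torsion-free nilpotent) subgroup cannot force $e \in P$, because such subgroups \emph{are} left-orderable \fullcsee{LOExers}{nilpotent}; in particular the Heisenberg group $\langle x,y,z\rangle$ admits orders with $x,y,z$ all positive \fullcsee{HeisExers}{orderable}. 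So knowing only the signs of the six elementary generators is not enough data to reach a contradiction, and no bounded-length word determined in advance by those signs can do the job.

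What the paper actually uses is an \emph{Archimedean} phenomenon, not a relation. The key lemma (Ault--Rhemtulla, \cref{LOHeis}) says that in any left order on the Heisenberg group, the central element $z=[x,y]$ is \emph{infinitely} smaller than $x$ or than $y$ (written $z \ll x$ or $z \ll y$). Its proof is where the real work happens: if $z \not\ll x$ and $z \not\ll y$, one extracts integers $p,q$ (depending on the order) with $z^p \succ x$ and $z^q \succ y$, and then the identity $y^n x^n y^{-n} x^{-n} = z^{-n^2}$ shows that the product of the positive elements $y^n$, $x^n$, $(y^{-1}z^q)^n$, $(x^{-1}z^p)^n$ equals $z^{(p+q)n - n^2}$, which is $\prec e$ once $n$ is large. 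Note that the word length here is unbounded and chosen \emph{after} seeing the order --- the contradiction comes from quadratic growth of the commutator exponent beating linear growth of the corrections, not from a single algebraic identity. The higher-rank structure then enters exactly once: the six root subgroups of $\SL(3,\integer)$ sit in six overlapping Heisenberg copies arranged in a cycle, and applying the lemma to each in turn forces a cyclic chain of $\ll$ relations, hence $\heis2 \ll \heis2$, which is absurd. If you want to salvage your positive-cone framing, you would need to enlarge your witnessing set beyond the six generators (to include elements such as $u_{ij}^{-1}u_{ik}^{p}$ for the relevant $p$) and allow the word to depend on more than the signs --- at which point you are reconstructing \cref{LOHeis}.
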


The proof is based on understanding a certain famous subgroup~$H$ of $\SL(3,\integer)$:

\begin{notation} \label{HeisNotn}
Let $H$ be the \emph{discrete Heisenberg group}, which means
	$$H = \begin{bmatrix} 1 & \integer & \integer \cr
 		&1 & \integer \\
		 & & 1 \end{bmatrix} 
		 \subset \SL(3,\integer) .$$
For convenience, let us also fix names for some particular elements of~$H$:
	$$ x =  \begin{bmatrix} 
			1 & 1 & 0 \\
 			&1 & 0 \cr
			& & 1\end{bmatrix}
		, \qquad
		y = \begin{bmatrix} 
			1 & 0 & 0 \\
			&1 & 1 \\
 			& & 1\end{bmatrix}
		, \qquad
		z = \begin{bmatrix} 
			1 & 0 & 1 \\
			&1 & 0 \\
			& & 1\end{bmatrix}
		. $$
(Note that $\{ x, y \}$ is a generating set for~$H$.)
\end{notation}

 \begin{exers} \label{HeisExers} \ 
 \noprelistbreak
\begin{enumerate}
	\item \label{HeisExers-z=[x,y]}
	Show $z = [x,y] \in Z(H)$, where 
		\begin{itemize}
		\item $[x,y] = x^{-1} y^{-1} x y$ is the \emph{commutator} of~$x$ and~$y$, 
		and 
		\item$Z(H) = \{\, a \in H \mid ah = ha, \forall h \in H\,\}$ is the \emph{center} of~$H$. 
		\end{itemize}
	\item \label{HeisExers-[xyyk]}
	Show $x^k y^\ell = y^\ell x^k z^{k \ell}$ for $k,\ell \in \integer$.
	\item \label{HeisExers-orderable}
	{\rm(optional)} 
	Show $H$ has a left-invariant total order. 
		\\ \hint{If $N$ is a normal subgroup of~$\Gamma$, such that $N$ and $\Gamma/N$ each have a left-invariant total order, then $\Gamma$ has a left-invariant total order \fullcsee{LOExers}{extension}.}
		\end{enumerate}
 \end{exers}

\begin{notation}
Suppose $\prec$ is a left-invariant total order on a group~$\Gamma$. For $a,b \in \Gamma$, we write $a \ll b$ if $a$ is \emph{infinitely smaller} than~$b$. I.e.,
	$$a \ll b \iff a^n \prec |b|, \ \forall n \in \integer ,
	\quad
	\text{where $|b| = \begin{cases}
	b & \text{if $b \succeq e$} , \\
	b^{-1} & \text{if $b \prec e$}
	. \end{cases}$}
	$$
\end{notation}

Here is the key fact that will be used in the proof:

\begin{lem}[Ault \cite{Ault-RONilpGrps}, Rhemtulla \cite{Rhemtulla-ROGrps}] \label{LOHeis}
If $\prec$ is any left-invariant total order on~$H$, then either $z \ll x$ or $z \ll y$.
 \end{lem}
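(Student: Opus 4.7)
The plan is to argue by contradiction: suppose that neither $z \ll x$ nor $z \ll y$. After replacing $x$, $y$, $z$ by their inverses if needed --- which is harmless, since the defining condition of $\ll$ is invariant under inverting either side, and flipping $x$ or $y$ only changes the commutator to $z^{-1}$ --- I may assume that $x, y, z \succ e$, with $[x,y] = z^{\epsilon}$ for some $\epsilon \in \{\pm 1\}$. The negations of $z \ll x$ and $z \ll y$ then give positive integers $M$ and $N$ with $x \preceq z^M$ and $y \preceq z^N$.

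The first key step is a bootstrap: I claim $x^k \preceq z^{Mk}$ for every $k \ge 0$. Inductively, left-multiplying $x^k \preceq z^{Mk}$ by $x$ gives $x^{k+1} \preceq x \, z^{Mk} = z^{Mk} x$ (using centrality of $z$), and then left-multiplying $x \preceq z^M$ by $z^{Mk}$ gives $z^{Mk} x \preceq z^{M(k+1)}$. The same argument yields $y^\ell \preceq z^{N\ell}$. Combining these via centrality and left-invariance, both $x^k y^\ell$ and $y^\ell x^k$ are bounded above by $z^{Mk + N\ell}$.

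The second ingredient is the Heisenberg commutation identity from \cref{HeisExers} (with the appropriate sign): $x^k y^\ell = y^\ell x^k z^{\epsilon k\ell}$. Since $y^\ell x^k \succ e$ (a product of two positive elements), left-multiplying by $z^{\epsilon k\ell}$ shows that whichever of $x^k y^\ell$ and $y^\ell x^k$ is the larger must strictly exceed $z^{k\ell}$. Together with the upper bound just established, this forces $z^{k\ell} \prec z^{Mk + N\ell}$, hence $k\ell < Mk + N\ell$ for all $k, \ell \ge 1$ (using that $z$ has infinite order with $z \succ e$). Taking $k = \ell$ large enough produces the contradiction, since the left side grows quadratically while the right side grows only linearly.

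The step I expect to need the most care is the bootstrap $x \preceq z^M \implies x^k \preceq z^{Mk}$. In a merely left-invariantly ordered group, right-multiplication is not order-preserving, so the naive ``multiply termwise'' move is not available; the whole argument works only because $z$ is \emph{central}, which silently converts the relevant right-multiplications by powers of $z$ into left-multiplications. Handling the two sign cases $\epsilon = \pm 1$ is also a minor bookkeeping annoyance, but they are resolved symmetrically by swapping the roles of $x^k y^\ell$ and $y^\ell x^k$ in the final step.
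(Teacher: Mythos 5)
Your proof is correct and follows essentially the same strategy as the paper: both derive a contradiction by pitting the quadratic exponent of $z$ in the commutator identity $x^k y^\ell = y^\ell x^k z^{k\ell}$ against the linear bounds that follow from negating $z \ll x$ and $z \ll y$. The paper packages the comparison as a single product of positive elements, $e \prec y^n x^n (y^{-1}z^q)^n (x^{-1}z^p)^n = z^{(p+q)n - n^2}$, whereas you run an explicit bootstrap induction and a two-sided squeeze, but the centrality of $z$ is doing the same work in both arguments.
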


\begin{proof} 
Assume, for simplicity, that $x,y,z  \succ e$. 
(This actually causes no loss of generality, since there is no harm in replacing some or all of $x$, $y$, and~$z$ by their inverses. This is because we can retain the relation $[x,y] = z$ by interchanging $x$ and~$y$ if necessary, since $[y,x] = z^{-1}$.) From \fullcref{HeisExers}{[xyyk]}, we have
	\begin{align} \label{LOHeisPfQuadratic}
	 y^n x^n y^{-n} x^{-n}   = z^{-n^2} 
	 .\end{align}
Note that the exponent of~$z$ is quadratic in~$n$ (and negative). 

Now suppose $z \not\ll x$ and $z \not\ll y$. Then there exist $p,q \in \integer$, such that $z^p  \succ x$  and  $z^q  \succ y$. 
Therefore  
	$$e \  \prec \  x^{-1}z^p,  \ y^{-1} z^q,  \ x,  \ y ,$$
so, for all $n \in \integer^+$, we have
	\begin{align*}
	e &\prec  y  ^n   \, x^n  \, (y^{-1}z^q)^n  \, (x^{-1} z^p)^n
	&& \text{(\cref{ProdPos})}
	\\& =  y^n  \, x^n  \, y^{-n}  \, x^{-n}  \, z^{qn+pn}
	&& \text{(since $z \in Z(H)$)}
	\\&=  z^{-n^2}  z^{(p+q)n}
	&& \text{\pref{LOHeisPfQuadratic}}
	\\&= z^{\text{(linear)} - \text{(quadratic)}}
	\\&=  \ z^{\text{negative}}
	&& \text{(if $n$ is sufficiently large)}
	\\&\prec e
	&& \text{(since $z \succ e$)}
	. \end{align*}
This is a contradiction.
\end{proof} 

\begin{proof}[Proof of \cref{SLnZNotLO}]
Suppose there is a left-invariant total order on $\Gamma = \SL(3,\integer)$. (For simplicity, we are writing the proof as if $\Gamma$ is the entire group $\SL(3,\integer)$, and leave it as an exercise for the reader to modify the proof to work for finite-index subgroups.)

In \cref{HeisNotn}, we gave names to three particular elements of~$\Gamma$ that have a single off-diagonal~$1$.  For this proof, we actually want to name all six such elements: we call them $\heis1$, $\heis2$, $\heis3$, $\heis4$, $\heis5$, $\heis6$, where
	$$\SL(3,\integer) = \begin{bmatrix}* 
	& \heis 1 & \heis2 \\[2pt]
 	\heis4 & * & \heis3 \\[2pt]
 	\heis5 & \heis6 & *
 	\end{bmatrix} .$$
Thus, for example, $x = \heis1$, $y = \heis3$, and $z= \heis2$, so
$\left\langle \heis1, \heis2, \heis3 \right\rangle$ is the Heisenberg group. 
Actually, there are six copies of the Heisenberg group  in $\Gamma$ (see \fullcref{SL3ZPfExers}{Heis}): 
	\begin{align} \label{SixHeisInSL3}
	\begin{matrix}
	\bigl\langle \heis 1, \heis2, \heis3 \bigl\rangle, & 
	\bigl\langle \heis2, \heis3, \heis4 \bigl\rangle, & 
	\bigl\langle \heis 3, \heis4, \heis5 \bigl\rangle \\[\medskipamount]
	\bigl\langle \heis4, \heis5, \heis6 \bigl\rangle, & 
	\bigl\langle \heis5, \heis6, \heis1 \bigl\rangle, & 
	\bigl\langle \heis 6,\heis 1, \heis2 \bigl\rangle
	. \end{matrix} 
	\end{align}

Since $\bigl\langle \heis1, \heis2, \heis3 \bigl\rangle$ is a Heisenberg group, \cref{LOHeis} tells us that either $\heis2  \ll   \heis1$  or  $\heis2 \ll  \heis3$.
Assume, without loss of generality, that 
	$$\heis2 \ll  \heis3 .$$
Also, since $\bigl\langle \heis2, \heis3, \heis4 \bigl\rangle$ is also a Heisenberg group, \cref{LOHeis} tells us that either $\heis3  \ll  \heis2$  or  $\heis3 \ll  \heis4$. However, we know  $\heis2 \ll  \heis3$, which implies $\heis3  \not\ll  \heis2$. So we must have
	$$\heis3 \ll  \heis4 .$$
Continuing in this way, using the other Heisenberg groups in succession, we have
	$$ \heis 2 \ll \heis3   \ll  \heis4   \ll  \heis5  \ll  \heis6  \ll  \heis1  \ll  \heis2 .$$
By transitivity, this implies $\heis2  \ll \heis2$, which is a contradiction.
\end{proof}

\begin{exers} \label{SL3ZPfExers} \ 
\noprelistbreak
\begin{enumerate}
\item \label{SL3ZPfExers-Heis}
Verify that each of the subgroups listed in \pref{SixHeisInSL3} is isomorphic to the Heisenberg group. More precisely, for $1 \le k \le 6$, verify that there is an isomorphism 
	$\varphi \colon \left\langle \, \ovalbox{$k-1$} \, , \ovalbox{$k$} \, , \ovalbox{$k+1$} \,\right\rangle \to H$, 
such that
	$$ \varphi \left( \ovalbox{$k-1$} \right) = x
	, \quad
	 \varphi \left( \ovalbox{$k$} \right) = z
	 , \text{\quad and}\quad
	 \varphi \left( \ovalbox{$k+1$} \right) = y, 
	  . $$

\item \label{SL3ZPfExers-FinInd}
The given proof of \cref{SLnZNotLO} is incomplete, because it assumes that $\Gamma$ is all of $\SL(3,\integer)$. (And we already knew from \cref{SLnZNoActBcsTorsion} that $\SL(3,\integer)$ has no faithful action on~$\real$.) Modify the proof so it is valid when $\Gamma$ is a finite-index subgroup of $\SL(3,\integer)$.

\end{enumerate}
\end{exers}

\section{Comments on other arithmetic groups}

\begin{rems} \ 
\noprelistbreak
	\begin{enumerate}
	\item The proof of \cref{SLnZNotLO} can be adapted to show that finite-index subgroups of the group $\Sp(4,\integer)$ do not have left-invariant total orders \cite{Witte-QrankAct1mfld}.
	\item From \fullcref{LOExers}{subgrp}, we see that if $\Gamma$ contains a finite-index subgroup of either $\SL(3,\integer)$ or $\Sp(4,\integer)$, then $\Gamma$ does not have a left-invariant total order. In the terminology of arithmetic groups, this exactly means \cite{Witte-QrankAct1mfld}:
		$$ \qquad \text{if $\Qrank \Gamma \ge 2$, then $\Gamma$ does not have a left-invariant total order} .$$
	\item The argument of \cref{SLnZNotLO} relies on the existence of Heisenberg groups in~$\Gamma$, so it does not apply to  $\SL \bigl( 2, \integer[\alpha] \bigr)$. (Heisenberg groups are nonabelian nilpotent groups, but every nilpotent subgroup of $\SL(2,\complex)$ is abelian.) We will use a quite different argument to discuss these groups in \cref{BddGenLect}.
	\item Another important case in which the argument of \cref{SLnZNotLO} cannot be applied is when $G/\Gamma$ is compact. This is because every nilpotent subgroup of $\Gamma$ is virtually abelian. 
	\end{enumerate}
\end{rems}

\begin{open} \label{CocpctOpen}
Find an arithmetic group\/~$\Gamma$,  such that $G/\Gamma$ is compact,  and 
no finite-index subgroup of\/~$\Gamma$ has a faithful action on\/~$\real$.
\end{open}

Most large arithmetic groups have Kazhdan's Property~$(T)$. (We refer the reader to E.\,Breuillard's lectures in this volume for further discussion of property~$(T)$.)
Therefore, a negative answer to the following well-known question would be a major advance toward settling \cref{CocpctOpen} (and many other interesting cases of \cref{NoActConj}):

\begin{open} 
Does there exist an infinite group with Kazhdan's Property\/~$(T)$ that has a faithful action on\/~$\real$ or~$S^1$? 
\end{open}

The answer is negative for actions on the circle if we require our actions to act by homeomorphisms that have continuous second derivatives:

\begin{thm}[Navas \cite{Navas-ActKazhdan}] 
Infinite groups with Kazhdan's Property\/~$(T)$ do not have faithful, $C^2$\!~actions on  $S^1$. 
\end{thm}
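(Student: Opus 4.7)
The strategy is to associate to any faithful $C^2$ action $\rho\colon\Gamma\to\Diff^2_+(S^1)$ a natural affine isometric action of $\Gamma$ on a Hilbert space, apply Property~$(T)$ to extract a fixed point, and convert that fixed point into a $\rho$-invariant absolutely continuous probability measure on~$S^1$. Such a measure forces $\rho(\Gamma)$ to be topologically conjugate (inside $\Homeo_+(S^1)$) to a subgroup of rotations, after which the standard fact that Property~$(T)$ groups have finite abelianization yields the contradiction.

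For the construction, I would take the Hilbert space $\mathcal{H} := H^{1/2}(S^1)/\real$ equipped with the $\Diff_+(S^1)$-invariant inner product associated with the singular kernel $1/\sin^2(\pi(x-y))$; with respect to the resulting unitary representation $\pi(f)h := h\circ f^{-1}$, the logarithmic derivative
$$ b(\gamma) \ :=\ \log\rho(\gamma^{-1})' $$
is a $1$-cocycle by the chain rule, i.e.\ $b(\gamma_1\gamma_2) = b(\gamma_1) + \pi(\gamma_1) b(\gamma_2)$. The $C^2$ hypothesis is what guarantees $b(\gamma)\in H^{1/2}(S^1)$ for every $\gamma$, so that $(\pi,b)$ really does define an affine isometric action of~$\Gamma$ on~$\mathcal{H}$.

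By Property~$(T)$, any such affine action admits a fixed point, which means $b$ is a coboundary: there is $v\in\mathcal{H}$ with $b(\gamma) = v - \pi(\gamma) v$ for every $\gamma\in\Gamma$. Exponentiating and normalizing, the absolutely continuous probability measure
$$ d\mu \ :=\ \tfrac{1}{Z}\,e^{v(x)}\,dx $$
is then $\rho$-invariant, and the coboundary equation forces its density to admit a strictly positive continuous representative (here the $H^{1/2}$ information is upgraded using the chain rule applied to smooth individual elements of $\rho(\Gamma)$).

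To finish, I invoke the classical observation that any subgroup of $\Homeo_+(S^1)$ preserving a probability measure of full support is topologically conjugate (via the distribution function of $\mu$) to a subgroup of $\SO(2)$. Thus $\rho(\Gamma)$ is abelian, so $\rho$ factors through the abelianization $\Gamma/[\Gamma,\Gamma]$; but Property~$(T)$ forces that abelianization to be finite, so $\rho$ has finite image, contradicting both faithfulness and $|\Gamma|=\infty$. The main technical obstacle is the construction in the second paragraph — verifying that the kernel-based inner product is genuinely $\Diff_+(S^1)$-invariant (so $\pi$ is unitary) and that $C^2$ regularity is enough to land $\log\rho(\gamma)'$ in $H^{1/2}$. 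This is precisely where the $C^2$ hypothesis is essential, and indeed the theorem is known to fail in the merely $C^1$ category.
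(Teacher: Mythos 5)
First, a point of reference: the paper does not prove this theorem --- it is quoted as a black box from Navas \cite{Navas-ActKazhdan} --- so your proposal can only be measured against Navas's actual argument. Your architecture is indeed his (and Reznikov's): manufacture an affine isometric action on a Hilbert space from the derivative cocycle of the action, use Property~$(T)$ to make the cocycle a coboundary, convert the coboundary into an invariant probability measure with a.e.\ positive density, conjugate to rotations via the distribution function, and finish with the finiteness of the abelianization. Those last steps are sound. The problem is the construction you yourself flag as the ``main technical obstacle\zz'': it is not a verification you postponed, it fails. The seminorm $\iint |h(x)-h(y)|^2\sin^{-2}(\pi(x-y))\,dx\,dy$ on $H^{1/2}(S^1)/\real$ is \emph{not} invariant under composition with a general diffeomorphism: substituting $x=f(u)$, $y=f(v)$, invariance for all $h$ forces the identity $f'(u)f'(v)\sin^2(\pi(u-v))=\sin^2\bigl(\pi(f(u)-f(v))\bigr)$, which characterizes the M\"obius subgroup $\PSL(2,\real)$ inside $\Diff_+(S^1)$. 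So $\pi(f)h=h\circ f^{-1}$ is not unitary on your $\mathcal{H}$, and there is no affine isometric action to which Property~$(T)$ can be applied; and if you repair unitarity by inserting the Radon--Nikodym factor, $\log\rho(\gamma^{-1})'$ stops satisfying the cocycle identity.

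Navas's construction fixes exactly this: he works in $L^2$ of $S^1\times S^1$ minus the diagonal with respect to the Liouville measure $\sin^{-2}(\pi(x-y))\,dx\,dy$, takes the representation to be composition by $f^{-1}\times f^{-1}$ twisted by the square root of the Jacobian (unitary by construction), and takes the cocycle to be, formally, $\pi(f)\mathbf{1}-\mathbf{1}$ --- that is, precisely the deviation of $f$ from the M\"obius identity above. The regularity hypothesis enters at that point: for $f$ of class $C^2$ (in fact $C^{3/2+\epsilon}$ suffices, as Navas later showed), this deviation is small enough near the diagonal to be square-integrable against the singular Liouville measure. It does not enter where you place it (membership of $\log f'$ in $H^{1/2}$, which already holds for much less regularity). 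The endgame is correspondingly different: the fixed point first produces an invariant geodesic current on $S^1\times S^1$, and a further argument of Navas converts this into an invariant probability measure on $S^1$ or a finite orbit. Finally, your closing assertion that ``the theorem is known to fail in the merely $C^1$ category'' is false: no infinite Kazhdan group is known to act faithfully on $S^1$ even by homeomorphisms --- this is exactly the Open Problem the paper states immediately before this theorem.
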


\begin{exers} \label{LOExers}
A group that has a left-invariant total order is said to be \emph{left-orderable}.
\noprelistbreak
\begin{enumerate}
\itemsep=\smallskipamount

\item \label{LOExers-subgrp}
Show that every subgroup of a left-orderable group is left-orderable.

\item \label{LOExers-abelian}
Show torsion-free, \emph{abelian} groups are left-orderable.

\item \label{LOExers-extension}
Show that if $N$ is a normal subgroup of~$\Gamma$, such that $N$ and~$\Gamma/N$ are left-orderable, then $\Gamma$ is left-orderable.
\hint{Compare $a$ with~$b$ in $\Gamma/N$, and use the order on~$N$ to break ties.}

\item \label{LOExers-nilpotent}
Show torsion-free, \emph{nilpotent} groups are left-orderable.

\item \label{LOExers-solvable}
{(harder)} Show that some torsion-free, \emph{solvable} group is \emph{not} left-orderable.

\item \label{LOExers-Exps}
Show that $\Gamma$ is left-orderable if and only if, for every finite sequence $g_1,\ldots,g_n$ of nontrivial elements of~$\Gamma$, there exists $\epsilon_1,\ldots,\epsilon_n \in \{\pm1\}$, such that the semigroup generated by $\{g_1^{\epsilon_1},\ldots,g_n^{\epsilon_n}\}$ does not contain~$e$.

\item \label{LOExers-locally}
Show \emph{locally} left-orderable $\implies$ left-orderable.
\\ \hintit{A group is said to \emph{locally} have a certain property if all of its \emph{finitely generated} subgroups have the property. So the exercise asks you to show that if every finitely generated subgroup of~$\Gamma$ is left-orderable, then $\Gamma$ is left-orderable.}

\item \label{LOExers-residually}
Show \emph{residually} left-orderable $\implies$ left-orderable.
\\ \hintit{$\Gamma$ is said to \emph{residually} have a certain property if, for every $g \in \Gamma$, there exists a group~$H$ with the property, and a homomorphism $\varphi \colon \Gamma \to H$, such that $\varphi(g) \neq e$.}

\item \label{LOExers-BurnsHale}
(Burns-Hale \cite[Cor.~2]{BurnsHale}) Show that if $H^1(\Lambda; \real) \neq 0$ for every nontrivial, finitely generated subgroup~$\Lambda$ of~$\Gamma$, then $\Gamma$ is left-orderable.


\end{enumerate}
\end{exers}


\lecture{Bounded generation and a proof for $\SL \bigl( 2, \integer[\alpha] \bigr)$}
\label{BddGenLect}

In \cref{LOLect}, we showed that finite-index subgroups of $\SL(3, \integer)$ do not have faithful actions on~$\real$. In this lecture, we prove the same conclusion for appropriate groups of $2 \times 2$ matrices.

\begin{notation}
Throughout this lecture, $\alpha$ is a algebraic integer that is real and irrational. (Actually, we do not need to require $\alpha$ to be real unless it satisfies a quadratic equation with rational coefficients.)
\end{notation}

\begin{thm}[Lifschitz-Morris \cite{LMActOnLine}] \label{SL2ANoAct}
If\/ $\Gamma$ is a finite-index subgroup of\/ $\SL \bigl( 2, \integer[\alpha] \bigr)$, then\/ $\Gamma$ does not have a faithful action on\/~$\real$.
\end{thm}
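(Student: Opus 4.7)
The strategy is to exploit the two ingredients highlighted in the lecture title---the algebraic translation of ``action on $\real$'' into left-orderability, and the bounded generation of $\SL\bigl(2, \integer[\alpha]\bigr)$ by unipotents. The Heisenberg-subgroup argument of \cref{SLnZNotLO} is unavailable, since every nilpotent subgroup of $\SL(2, \complex)$ is abelian, so the contradiction must come from a different source.

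I would begin by applying \cref{ActIffLO} to reduce the problem to a purely algebraic one: it suffices to show that $\Gamma$ admits no left-invariant total order. Suppose for contradiction that $\prec$ is such an order. Next I would invoke a bounded generation theorem of Carter--Keller--Paige (extended to rings of the form $\integer[\alpha]$ by Morgan--Rapinchuk--Sury): a finite-index subgroup of $\SL\bigl(2, \integer[\alpha]\bigr)$ is boundedly generated by elementary unipotent subgroups. Concretely, there are finitely many abelian subgroups $U_1, \ldots, U_r \le \Gamma$ --- each a conjugate of a finite-index subgroup of the upper- or lower-unitriangular subgroup, and so each isomorphic as a group to $(\integer[\alpha], +) \cong \integer^n$ (where $n = \deg \alpha$) --- and a constant $N$ such that every $g \in \Gamma$ can be written as a product of at most $N$ elements drawn from $U_1 \cup \cdots \cup U_r$.

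The contradiction will come from combining this bounded-length factorization with the Steinberg-type commutator relations among the $U_i$, which are nontrivial precisely because $\alpha$ satisfies a polynomial relation over $\integer$. The plan is to use these relations to produce an element whose sign under $\prec$, when computed two different ways via bounded generation, yields incompatible conclusions; the irrationality of $\alpha$ is what forces the two computations to disagree. Concretely, a diagonal (or normalizer) element $d \in \Gamma$ conjugates $U^+ \cong \integer[\alpha]$ by an $\integer$-linear automorphism that factors multiplication by $\alpha^2$ on $\integer[\alpha]$. The restriction of $\prec$ to the abelian group $U^+$ gives a positive cone; iterating the $d$-conjugation and applying \cref{ProdPos} plus the additive closure of that cone should yield a long product of positive elements whose total---computed via bounded generation and the minimal polynomial of $\alpha$---is forced to lie on the wrong side of $e$.

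The main obstacle will be this endgame: finding the explicit element (or family of elements) in $\Gamma$ whose decomposition as a bounded product of unipotents is rigid enough to be detected by \emph{any} left-invariant order, and translating the algebraic relation $p(\alpha) = 0$ into an order-theoretic inequality that contradicts the semigroup property of the positive cone. That this approach really does use the hypothesis on $\alpha$ is reassuring: $\SL(2, \integer)$ is not boundedly generated by unipotents, and its finite-index free subgroups are left-orderable by \cref{FreeGrpActsOnR}, so any valid proof must break down precisely when $\alpha \in \rational$.
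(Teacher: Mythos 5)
You have correctly identified the first ingredient (bounded generation of $\SL\bigl(2,\integer[\alpha]\bigr)$ by the unipotent subgroups $\U$ and~$\V$, \cref{SL2OBddGen}), but the second ingredient is missing, and the substitute you sketch does not work. The paper's second ingredient is a dynamical statement (\cref{BddOrbs}): in any action of~$\Gamma$ on~$\real$, every $\U$-orbit and every $\V$-orbit is a \emph{bounded} subset of~$\real$. Granting this, the endgame is short and entirely different from yours: after restricting to a connected component of the complement of the fixed-point set (so that the action may be assumed fixed-point free), bounded generation converts boundedness of the $\U$- and $\V$-orbits into boundedness of every $\Gamma$-orbit (apply $\U$ and $\V$ alternately $n$ times, tracking a compact interval), and the supremum of a bounded $\Gamma$-invariant set is a global fixed point --- contradiction. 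No Steinberg relations or minimal-polynomial computation appears anywhere.

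The gap in your proposed endgame is that no contradiction can be extracted from the positive cone of a single unipotent subgroup $U^+\cong\integer[\alpha]$ together with the conjugation action of a diagonal element: the abelian group $\integer[\alpha]$ carries left-invariant orders (e.g.\ the archimedean order from an embedding into~$\real$) that \emph{are} preserved by multiplication by $u^2$ for a suitable unit~$u$, so no order-theoretic inequality is violated there. (Note also that conjugation by $\mathrm{diag}(u,u^{-1})$ acts as multiplication by $u^2$ only for $u$ a \emph{unit}; $\alpha$ itself need not be one.) Moreover, in $\SL(2)$ the groups $\U$ and~$\V$ are opposite root groups, so there are no Steinberg commutator relations of the kind that would let you ``compute the sign two ways.'' The actual mechanism in Lifschitz--Morris is the interaction of $\U$ and~$\V$ at a common point: a diagonal element $\pp$ of infinite order expands $\U$ and contracts $\V$ under conjugation \pref{CommRelns}, and comparing $\pp^n\bigl(\overline u(x)\bigr)$ with $\pp^n\bigl(\underline v(x)\bigr)$ at a point~$x$ fixed by~$\pp$ whose $\U$- and $\V$-orbits are both unbounded above forces an unbounded sequence to stay below a convergent one. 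The irrationality of~$\alpha$ enters exactly here --- it guarantees that the ring has a unit of infinite order, hence an infinite diagonal subgroup --- not through the relation $p(\alpha)=0$. Your closing sanity check is right that the argument must fail over~$\integer$, but the failure points are the finiteness of the unit group and the failure of bounded generation, not the absence of a minimal polynomial.
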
 

The proof has two ingredients: 
\textit{bounded generation} 
and 
\textit{bounded orbits}. Both are with respect to \emph{unipotent subgroups}.

\begin{notation}
Let $\U =$  {\smaller$ \begin{bmatrix}1 & * \\ 0 & 1\end{bmatrix}$}
and
$\V =$   {\smaller $ \begin{bmatrix} 1 & 0 \\ * & 1 \end{bmatrix}$}.  
These are ``unipotent'' subgroups of $\SL \bigl( 2, \integer[\alpha] \bigr)$.
\end{notation}

\begin{rem}
Any subgroup of $\SL \bigl( 2, {\ast} \bigr)$ that is conjugate to a subgroup of~$\U$ is said to be \emph{unipotent}, but we do not need any unipotent subgroups other than $\U$ and~$\V$.
\end{rem}

\section{What is bounded generation?}  

Now that we know what unipotent subgroups are, let us see what ``bounded generation'' means.

\begin{recall}
A basic theorem of undergraduate linear algebra says that every invertible matrix can be reduced to the identity matrix by row operations (or by column operations, if you prefer those). Also, since performing a row operation (or column operation) is the same as multiplying by an ``elementary matrix\zz,'' this implies the important fact that every invertible matrix is a product of elementary matrices. In other words, 
	$$ \text{\it the elementary matrices generate the group of all invertible matrices.} $$
\end{recall}

However, in your undergraduate course, the scalars were assumed to be in a \emph{field} (probably either $\real$ or~$\complex$), but our matrices have their entries in a ring of integers (namely, either $\integer$ or $\integer[\alpha]$), which is not a field. Fortunately, this is not a problem:

\begin{eg} \label{RowReduceEg}
The matrix 
	{\smaller$\begin{bmatrix} 13 & 31 \\ 5 & 12 \\ \end{bmatrix}$}
is a fairly typical element of $\SL(2,\integer)$. Let us see that it can be reduced to the identity matrix, by using only \emph{integer} row operations. More precisely, the only allowable operation is adding an integer ($\integer$) multiple of one row to another row. (In linear algebra, a few additional operations are usually allowed, such as multiplying a row by a scalar, but we will not permit those operations.) Using $\leadsto$ to denote applying a row operation, we see that the matrix can indeed be reduced to the identity:
$$\begin{bmatrix}
13 & 31 \\
5 & 12 \\
\end{bmatrix}
\leadsto
\begin{bmatrix}
3 & 7 \\
5 & 12 \\
\end{bmatrix}
\leadsto
\begin{bmatrix}
3 & 7 \\
2 & 5 \\
\end{bmatrix}
\leadsto
\begin{bmatrix}
1 & 2 \\
2 & 5 \\
\end{bmatrix}
\leadsto
\begin{bmatrix}
1 & 2 \\
0 & 1 \\
\end{bmatrix}
\leadsto
\begin{bmatrix}
1 & 0 \\
0 & 1 \\
\end{bmatrix} . $$
\end{eg}

Here is the general case:

\begin{prop} \label{RowReduceSL2Z}
Every matrix in $\SL\bigl(2,\integer)$ can be reduced to the identity matrix by integer row operations.
\end{prop}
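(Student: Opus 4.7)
My plan is to imitate \cref{RowReduceEg}: run the Euclidean algorithm on the first column using only integer row operations, and then clean up. The critical observation is that if $M = \begin{bmatrix} a & b \\ c & d \end{bmatrix}$ has determinant~$1$, then $\gcd(a,c) = 1$, because any common divisor of~$a$ and~$c$ must divide $ad - bc = 1$. So I can reduce the first column to $\begin{bmatrix} \pm 1 \\ 0 \end{bmatrix}$ by the standard Euclidean procedure: assuming $|a| \ge |c| > 0$, write $a = qc + r$ with $|r| < |c|$ and replace row~$1$ by row~$1$ minus $q$ times row~$2$; then swap the roles of the two rows and repeat. The nonzero entries strictly decrease in absolute value, so the process terminates with one entry equal to~$0$ and the other equal to~$\pm 1$. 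If the~$0$ ends up on top, a single extra operation---adding the other row---moves the $\pm 1$ into the top-left position.

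At this point the matrix is upper triangular, and the determinant condition forces it to have the form $\begin{bmatrix} \epsilon & t \\ 0 & \epsilon \end{bmatrix}$ with $\epsilon \in \{\pm 1\}$. When $\epsilon = +1$, subtracting $t$ times row~$2$ from row~$1$ yields the identity, and I am done.

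The one genuine obstacle is the case $\epsilon = -1$, because integer row operations do not include negating a row, so it is not immediately clear how to pass from~$-I$ to~$I$. I will handle this by first clearing the top-right entry (which turns the matrix into~$-I$ exactly), and then applying an explicit four-step sequence that I can simply exhibit and verify by direct calculation: row~$2 \leftarrow$ row~$2 + 2\cdot{}$row~$1$, then row~$1 \leftarrow$ row~$1 -{}$row~$2$, then row~$2 \leftarrow$ row~$2 + 2\cdot{}$row~$1$, then row~$1 \leftarrow$ row~$1 -{}$row~$2$. A quick check shows this takes~$-I$ to~$I$, which completes the reduction of~$M$ to the identity. (Conceptually, any such sequence corresponds to writing $-I$ as a product of the elementary matrices $\begin{bmatrix} 1 & 1 \\ 0 & 1 \end{bmatrix}$, $\begin{bmatrix} 1 & 0 \\ 1 & 1 \end{bmatrix}$ and their inverses; the four-step sequence above is just one such expression.)
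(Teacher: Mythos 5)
Your proof is correct and follows essentially the same Euclidean-algorithm strategy as the paper's; in fact you supply the one detail the paper glosses over with ``a few more row operations,'' namely how to pass from $-I$ to $I$ using only integer row operations, and your explicit four-step sequence does check out. The only micro-slip is that when the $0$ lands on top you need \emph{two} extra operations, not one, before the matrix is upper triangular (one to copy the $\pm1$ into the top-left entry and one to clear the bottom-left entry), but since no bound on the number of operations is claimed this is immaterial.
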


\begin{proof}[Idea of proof]
Essentially, we apply the Euclidean Algorithm: 
	\begin{itemize}
	\item Choose the smallest nonzero entry in the first column, and use a row operation to subtract an appropriate integer multiple of it from the other entry in the first column. Now it is the other entry that is the smallest in the first column.
	\item By repeating this process, we will eventually reach a situation with only one nonzero entry in the first column. Since the determinant is~$1$, this entry must be a unit in the ring~$\integer$, which means that the entry is either $1$ or~$-1$. By performing just a few more row operations, we can assume it is~$1$, and that it is in the top-left corner.
	\item Now the matrix is upper triangular, with a $1$ in the top-left corner. Since the determinant is~$1$, there must also be a~$1$ in the bottom right corner. So one additional row operation yields the identity matrix.
	\qedhere \end{itemize}
\end{proof}

Here is another way of saying the same thing:

\begin{cor} \label{UVGenSL2Z}
$\U$ and~$\V$ generate $\SL(2,\integer)$. 
\end{cor}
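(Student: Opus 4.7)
The plan is to translate the row-reduction statement of Proposition into a statement about left-multiplication by elements of $\U$ and $\V$. The key observation is that the two types of permissible integer row operations correspond exactly to left-multiplication by elements of $\U$ and $\V$: adding $k$ times the second row to the first row is the same as left-multiplying by $\bigl[\begin{smallmatrix} 1 & k \\ 0 & 1 \end{smallmatrix}\bigr] \in \U$, and adding $k$ times the first row to the second row is the same as left-multiplying by $\bigl[\begin{smallmatrix} 1 & 0 \\ k & 1 \end{smallmatrix}\bigr] \in \V$.

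Given this dictionary, the argument is straightforward. Let $g \in \SL(2,\integer)$. By the Proposition, there is a finite sequence of integer row operations that reduces $g$ to the identity matrix. Under the translation above, this sequence corresponds to elements $w_1, w_2, \ldots, w_m \in \U \cup \V$ with the property that $w_m \cdots w_2 w_1 g = I$. Rearranging, $g = w_1^{-1} w_2^{-1} \cdots w_m^{-1}$. Since $\U$ and $\V$ are subgroups, each $w_i^{-1}$ lies in $\U \cup \V$, so $g$ is a word in the elements of $\U \cup \V$. Hence $\U$ and $\V$ together generate $\SL(2,\integer)$.

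There is really no substantive obstacle here; the corollary is essentially a rephrasing of the Proposition once one recognizes the correspondence between row operations and left-multiplication by elementary matrices. The only thing to verify (which Example~\ref{RowReduceEg} already illustrates implicitly) is that each individual row operation is realized by a single element of $\U$ or $\V$, so that the full reduction assembles into a product in $\U \cup \V$ rather than requiring some other type of matrix. Once this is noted, the proof is a one-line computation.
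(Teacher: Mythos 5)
Your proof is correct and is essentially the paper's own argument: the paper's entire proof of the corollary is the observation that adding an integer multiple of one row to another is left-multiplication by an element of $\U$ or~$\V$, which is exactly your dictionary. You merely spell out the inversion step $g = w_1^{-1}\cdots w_m^{-1}$ that the paper leaves implicit.
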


\begin{proof}
Adding an integer multiple of one row to another row is the same as multiplying on the left by a matrix in either $\U$ or~$\V$.
\end{proof}

In \cref{RowReduceEg}, we used only a few (namely, $5$) row operations to reduce the matrix to the identity by using the procedure outlined in the proof of \cref{RowReduceSL2Z}. However, it is easy to construct examples of matrices for which this procedure will use an arbitrarily large number of steps. It would be much better to have a more clever algorithm that can reduce every matrix to the identity in no more than, say, $1000$ row operations.
Unfortunately, this is \emph{impossible:}

\begin{unfortunate}[see \fullcref{BddGenExers}{SL2Z}] \label{ReduceSL2ZUnbdd}
For every $c$, there is a matrix in $\SL(2,\integer)$ that cannot be reduced to the identity with less than~$c$ integer row operations.
\end{unfortunate}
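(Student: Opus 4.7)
The plan is to construct, for each $c$, a matrix in $\SL(2,\integer)$ built from consecutive Fibonacci numbers that cannot be reduced to $I$ in fewer than $c$ integer row operations.

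First, I would observe the correspondence between row reductions and elementary-matrix factorizations: reducing $M$ to the identity in $c$ row operations amounts to $E_c^{-1}\cdots E_1^{-1}M = I$, that is, $M = E_1 E_2 \cdots E_c$ with each $E_i \in \U \cup \V$. I would then track only the first column: each row operation modifies the first column $(p,r)$ of $M$ by $(p,r) \mapsto (p+ar, r)$ or $(p,r) \mapsto (p, r+bp)$ for some $a,b \in \integer$. Since reducing $M$ to $I$ in particular reduces $(p,r)$ to $(1,0)$, the number of row operations is bounded below by the number of such column operations required.

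Next, taking $(p,r) = (F_{n+1}, F_n)$ to be consecutive Fibonacci numbers, I would invoke a Fibonacci-extremality argument in the spirit of Lam\'e's theorem to show that at least $\Omega(n)$ such column operations are required to reach $(1,0)$. The intuition: each operation reduces $\log\max(|p|,|r|)$ by at most a bounded multiple of $\log\varphi$ (where $\varphi = (1+\sqrt 5)/2$), because the Fibonacci trajectory cannot be shortcut in a single step with an arbitrary integer multiple. Since $\log F_{n+1} = n\log\varphi + O(1)$ and we must reach $\log\max = 0$, at least $\Omega(n)$ steps are needed.

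Finally, since $\gcd(F_{n+1},F_n) = 1$, Bezout's identity yields $q,s \in \integer$ with $F_{n+1}s - F_n q = 1$, giving the matrix $M_n = \bigl(\begin{smallmatrix} F_{n+1} & q \\ F_n & s \end{smallmatrix}\bigr) \in \SL(2,\integer)$. By the preceding step this $M_n$ requires $\Omega(n)$ row operations, and for $n$ chosen large enough the count exceeds any prescribed $c$. The main obstacle is establishing the $\Omega(n)$ lower bound in the presence of arbitrary integer multiples: a priori, a clever choice of multiple might reduce $\log\max$ much faster than the standard minimum-remainder Euclidean step. The point to verify is that any trajectory from a Fibonacci pair to $(1,0)$ must pass through pairs whose entry ratio stays comparable to a small power of $\varphi$, thereby limiting the per-step decrease in $\log\max$ to a fixed constant and forcing the total number of steps to grow linearly in $n$.
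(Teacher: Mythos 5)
Your overall strategy --- exhibit explicit Fibonacci matrices and bound from below the number of column operations needed to carry $(F_{n+1},F_n)$ to $(1,0)$ --- is a genuinely different (more elementary and more quantitative) route than the one the paper takes, and the reduction to the first column is correct. But the heart of the matter is the $\Omega(n)$ lower bound, and there the proposal has a real gap: the principle you invoke, that a single operation decreases $\log\max(|p|,|r|)$ by at most a bounded amount, is false. The operation $(p,r)\mapsto(p+ar,r)$ with $a$ an arbitrary integer can replace $p$ by any representative of $p \bmod r$, so from a pair such as $(N,3)$ with $N$ astronomically large and $N\equiv 1\pmod 3$ one reaches $(1,3)$ in a single step; $\log\max$ collapses by an unbounded amount. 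Hence no naive potential argument on $\log\max$ can work, and the statement you leave ``to verify'' --- that every trajectory from a Fibonacci pair to $(1,0)$ keeps the entry ratio under control --- is the entire content of the fact, not a detail. A workable way to close the gap is via continued fractions: after merging consecutive operations of the same type, a length-$k$ reduction exhibits $F_{n+1}/F_n$ as a continued fraction $[a_1;a_2,\ldots,a_k]$ with arbitrary integer partial quotients; a partial quotient of large modulus forces some intermediate convergent $p/q$ with $q<F_n$ to approximate $F_{n+1}/F_n$ too well, contradicting the bad approximability of $\varphi$ (namely $|\varphi-p/q|\ge c q^{-2}$); hence all $|a_i|$ are bounded by a universal constant, and then $F_n\le\prod_i(|a_i|+1)\le C^k$ forces $k\ge c'n$. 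That argument can be made rigorous, but none of it is in your proposal, and the convergent estimates require care once negative and cancelling partial quotients are allowed.

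For comparison, the paper disposes of the fact by soft methods: $\SL(2,\integer)$ contains a finite-index free subgroup; bounded generation by cyclic subgroups passes to and from finite-index subgroups (\fullcref{BddGenExers}{FinInd}); and $F_2$ is not boundedly generated by cyclic subgroups (\fullcref{QuasiMExers}{F2NotBddGen}), because $\Hb^2(F_2;\real)$ is infinite-dimensional by the Brooks quasimorphisms (\cref{H2b(free)}) while bounded generation forces the kernel of the comparison map $\Hb^2\to H^2$ to be finite-dimensional (\fullcref{QuasiMExers}{KernelFD}). That route is nonconstructive but proves more --- no finite family of cyclic subgroups whatsoever boundedly generates $\SL(2,\integer)$ --- whereas yours, once the continued-fraction lemma is actually supplied, produces explicit matrices and an explicit linear lower bound for the specific subgroups $\U$ and~$\V$.
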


Here is another way of saying this:  \cref{UVGenSL2Z} tells us that every element of $\SL(2,\integer)$ can be written as a word in the elements of~$\U$ and~$\V$. What \cref{ReduceSL2ZUnbdd} tells us is that there is no uniform bound on the length of the word: some elements of $\SL(n,\integer)$ require a word of length more than a hundred, others require length more than a million, others require length more than a trillion, and so on. 

In other words, if $g \in \SL(2,\integer)$, then \cref{UVGenSL2Z} tells us, for some~$n$, there are sequences $\{u_i\}_{i=1}^n \subset \U$ and $\{v_i\}_{i=1}^n \subset \V$, such that 
	$$ g = u_1 v_1 u_2 v_2 \cdots u_n v_n .$$
However, \cref{ReduceSL2ZUnbdd} tells us that there is no uniform bound on~$n$ that is independent of~$g$. That is, although $\U$ and~$\V$ \emph{generate} $\SL(2,\integer)$, they do not \emph{boundedly} generate $\SL(2,\integer)$.

Here is the official definition:

\begin{defn}
Suppose $X_1,\ldots,X_k$ are subgroups of~$\Gamma$. We say $X_1,\ldots,X_k$ \emph{boundedly generate}~$\Gamma$ if there is some~$n$, such that
	$$ \Gamma = (X_1 X_2\cdots,X_k)^n .$$
In other words, for every $g \in \Gamma$, there exist sequences $\{x_{i,j}\}_{j=1}^n \subseteq X_i$, such that
	$$ g = x_{1,1} x_{2,1} \cdots x_{k,1} \, x_{1,2} x_{2,2} \cdots x_{k,2} \, \cdots \, x_{1,n} x_{2,n} \cdots x_{k,n} .$$ 
The key point (which is what distinguishes this from just saying the subgroups \emph{generate}~$\Gamma$) is that there is an upper bound on~$n$ that is independent of~$g$. (Then, since $x_{i,j}$ is allowed to be the identity element, we can take the same value of~$n$ for all~$g$.)
\end{defn}

\begin{exers} \label{BddGenExers}
When $\Gamma$ is boundedly generated by cyclic subgroups (i.e., $\Gamma = H_1 H_2 \cdots H_n$, with each $H_i$ cyclic), we usually just say $\Gamma$ is \emph{boundedly generated}.
\noprelistbreak
\begin{enumerate} 

\item \label{BddGenExers-quotient}
Assume $\Gamma$ is boundedly generated (by cyclic subgroups), and $N$ is a normal subgroup of~$\Gamma$. Show that $\Gamma/N$ is boundedly generated (by cyclic subgroups).

\item \label{BddGenExers-modnth}
Assume $\Gamma$ is boundedly generated (by cyclic subgroups), and $n \in \integer^+$.
 Show $\bigl\langle\, g^n \mid g \in \Gamma \,\bigr\rangle$ has finite index in~$\Gamma$.

\item \label{BddGenExers-FinInd}
Let $\dot\Gamma$ be a finite-index subgroup of~$\Gamma$. Show that $\Gamma$ is boundedly generated (by cyclic subgroups) if and only if $\dot\Gamma$~is boundedly generated (by cyclic subgroups)

\item \label{BddGenExers-SL2Z}
Prove \cref{ReduceSL2ZUnbdd}.
\hint{The free group~$F_2$ is not boundedly generated by cyclic subgroups \fullcsee{QuasiMExers}{F2NotBddGen}. You may assume this fact (without proof).}

\item \label{BddGenExers-VariablePowers}
(harder) Assume $\Gamma$ is boundedly generated (by cyclic subgroups), and $n$ is any function from~$\Gamma$ to~$\integer^+$.
 Show $\bigl\langle\, g^{n(g)} \mid g \in \Gamma \,\bigr\rangle$ has finite index in~$\Gamma$.

\end{enumerate}
\end{exers}

\section{Bounded generation of $\SL \bigl( 2, \integer[\alpha] \bigr)$} \label{BddGenSL2ASect}

Although there is no bound on the number of $\integer$ operations needed to reduce a $2 \times 2$ matrix to the identity, we can find a bound if we allow slightly more scalars in our operations:

\begin{thm}[Carter-Keller-Paige \cite{CKP,Morris-CKP}] \label{SL2OBddGen}
The subgroups $\U$ and~$\V$ boundedly generate\/ $\SL\bigl(2,\integer  {[\alpha]} \bigr)$. 
\end{thm}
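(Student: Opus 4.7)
The plan is to mimic the row-reduction argument of Proposition \ref{RowReduceSL2Z}, replacing the integer Euclidean algorithm---unavailable in $\integer[\alpha]$ when this ring fails to be Euclidean---by an arithmetic construction that exploits units. The hypothesis on $\alpha$ forces $K = \rational(\alpha)$ to have at least two archimedean places, so by Dirichlet's unit theorem $\integer[\alpha]^\times$ is infinite; this supply of units is what compensates for the loss of Euclideanness.

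First I would reduce the problem to a single assertion about columns: there is a constant $N$, depending only on $\alpha$, such that for every unimodular pair $(a,c) \in \integer[\alpha]^2$ (meaning $a \integer[\alpha] + c \integer[\alpha] = \integer[\alpha]$) the column $(a,c)^T$ can be sent to $(1,0)^T$ by at most $N$ operations of the form $a \mapsto a + tc$ or $c \mapsto c + ta$ with $t \in \integer[\alpha]$. Each such operation is left multiplication of the original matrix by an element of $\U$ or $\V$, and once the first column equals $(1,0)^T$ the full matrix lies in $\U$, adding just one final factor.

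The crux is to produce, with a bounded number of such operations, a pair $(a',c')$ in which $a'$ is a unit of $\integer[\alpha]$. Once $a' = u$ is a unit, one application of an element of $\V$ clears the bottom entry to leave $(u,0)^T$; the Whitehead identity expresses $\mathrm{diag}(u, u^{-1})$ as a fixed-length product of elementary matrices over any commutative ring, so a further bounded sequence of operations converts $(u,0)^T$ to $(1,0)^T$. To produce $t$ with $a + tc$ a unit in bounded work, I would invoke a Dirichlet-type theorem for the number field $K$: since $a$ is coprime to $c$, the residues $\{\,a + tc : t \in \integer[\alpha]\,\}$ fill the coset $a + c\integer[\alpha]$, and by a generalized Dirichlet theorem on primes in arithmetic progressions in $K$ one can find $t$ so that $(a + tc)$ is a prime ideal. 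Combining this with an effective form of $SK_1(\integer[\alpha]) = 1$ (Bass--Milnor--Serre) allows one to move, in a bounded number of further elementary steps, from a prime generator to an actual unit by multiplicative translations using the fundamental units of $\integer[\alpha]$.

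The main obstacle is the \emph{uniform} bound on $N$. A naive iteration of the Euclidean step gives a count that grows with the ``complexity'' of $(a,c)$, since the standard norm in $\integer[\alpha]$ need not decrease monotonically under such moves. This uniformity is precisely the technical heart of Carter--Keller--Paige: it demands an effective Dirichlet theorem whose constants depend only on $K$ (not on the entries of the matrix), together with the infinitude of the unit group to realize a prime generator as a unit in a fixed number of moves. Once that is in place, summing the bounded length of each phase---reduce to a unit top entry, clear the bottom, normalize the diagonal---gives the overall uniform bound and hence the asserted bounded generation.
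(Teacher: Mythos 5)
Your skeleton is the right one, and it matches the spirit of what the paper does: reduce to unimodular columns, manufacture a unit in one entry using boundedly many elementary operations, clear the column, and dispose of $\mathrm{diag}(u,u^{-1})$ by the Whitehead identity. Your observation that the hypothesis on $\alpha$ (real, or non-quadratic) forces $\integer[\alpha]^\times$ to be infinite via Dirichlet's unit theorem is also exactly the relevant arithmetic input. But there is a genuine gap at the crux, and you have in effect labelled it yourself: the passage from ``$(a+tc)$ generates a prime ideal'' to ``some boundedly-obtained entry is a \emph{unit}'' is not an argument. Multiplicative translation by fundamental units changes a matrix by factors of $\mathrm{diag}(u,u^{-1})$; it does not turn a prime element into a unit, and invoking ``an effective form of $SK_1(\integer[\alpha])=1$'' at this point is circular --- bounded elementary generation of $\SL(2,\integer[\alpha])$ \emph{is} the effective statement you are trying to prove (for $n=2$ this is not Bass--Milnor--Serre territory at all; even plain elementary generation of $\SL_2$ over such rings is a theorem of Vaserstein, and the bounded version is precisely Carter--Keller--Paige).

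For comparison: the paper does not prove \cref{SL2OBddGen} either. It explicitly defers the known (long) proofs and instead gives a five-operation reduction for the simplified ring $\integer[1/p]$, \emph{conditional} on a strengthened Artin conjecture (\cref{ArtinArithProgConj}). There the missing step is supplied as follows: one first makes the top-left entry a prime $q$ lying in the progression $a+kb$ with $p$ a \emph{primitive root} mod~$q$; the primitive-root property then yields $\ell$ with $p^\ell\equiv b\pmod q$, so a second operation makes the bottom-left entry the unit $p^\ell$. In other words, what you need is not that some $a+tc$ generates a prime ideal, but that the image of the unit group $\integer[\alpha]^\times$ in $(\integer[\alpha]/q)^\times$ is large enough to hit a prescribed residue class --- an Artin-type condition. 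The unconditional proof of Carter--Keller--Paige replaces this conjectural input by a delicate computation with Mennicke symbols, and that computation (not an ``effective Dirichlet theorem'') is where the uniform bound actually comes from. So your outline correctly locates the difficulty but does not close it.
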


In other words, there is some~$n$, such that 
	$$ \SL\bigl(2,\integer  {[\alpha]} \bigr)
	= (\U \V)^n
	=  \U \, \V \, \U \, \V \, \cdots\, \U \, \V .$$

\begin{rem}
The proof of the general case of \cref{SL2OBddGen} is nonconstructive, so it does not provide an explicit bound on~$n$. In cases where a bound is known, it depends on~$\alpha$
and can be arbitrarily large if the algebraic integer $\alpha$ is very complicated. However, it is believed that there should be a uniform bound that is independent of~$\alpha$. In fact, we will see below that if certain number-theoretic conjectures are true, then less than 10 row operations should always suffice.
\end{rem}

The known proofs of \cref{SL2OBddGen} are long and complicated, so we will not try to explain them. Instead, we will give a very short and simple proof that relies on an unproved conjecture in Number Theory.

\begin{defn}
Let $r$ and~$q$ be nonzero integers. We say $r$ is a \emph{primitive root} modulo~$q$ if 
	$$\{\,  r,   r^2,  r^3,   \ldots \,\} \mod q   \ = \  \{1,2,3, \ldots, q-1\} .$$
\end{defn}

\begin{eg}
$3$ is a primitive root modulo~$7$, because
	$$ 3, \ 3^2 \equiv 2, \ 3^3 \equiv 6, \ 3^4 \equiv 4, \ 3^5 \equiv 5, \ 3^6 \equiv 1 $$
is a list of all the nonzero residues modulo~$7$.
\end{eg}

Although it is still an open problem, we will assume:

\begin{conj}[Artin's Conjecture]
Let $r \in \integer$, such that $r \neq -1$ and $r$ is not a perfect square.
Then there exist infinitely many primes~$q$, such that $r$ is a primitive root modulo~$q$.
\end{conj}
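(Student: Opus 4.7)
The plan is to reproduce Hooley's 1967 conditional proof, since a truly unconditional derivation would resolve one of the outstanding open problems in analytic number theory. The starting point is to translate the primitive-root condition into a splitting condition in cyclotomic-Kummer extensions: $r$ fails to be a primitive root modulo a prime $q$ precisely when $r$ is a $p$-th power residue mod $q$ for some prime $p \mid q - 1$, and by Kummer theory this happens exactly when $q$ splits completely in the number field $K_p := \rational(\zeta_p, r^{1/p})$. The hypotheses $r \neq -1$ and $r$ not a perfect square are exactly what is needed to ensure the $K_p$ are ``as large as possible'' for every $p$; otherwise there are visible algebraic obstructions (e.g.\ if $r = s^2$ then $r$ is a square mod every odd $q$ and hence never primitive).

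Next, I would set up an inclusion-exclusion. Letting $N_r(x)$ count primes $q \le x$ for which $r$ is a primitive root, and $K_d$ the compositum of the $K_p$ for $p \mid d$, Chebotarev's density theorem applied in each $K_d$ yields the heuristic
\[
N_r(x) \sim A(r) \cdot \frac{x}{\log x},
\]
where $A(r)$ is a convergent Euler product generalizing Artin's constant $\prod_p \bigl( 1 - \tfrac{1}{p(p-1)} \bigr)$. The hypotheses on $r$ force $A(r) > 0$, so once the error terms are controlled the desired infinitude is immediate.

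The hard part is the uniformity in $p$ required to truncate the inclusion-exclusion. Unconditional effective Chebotarev is far too weak: it only handles primes $p$ up to about $\log x$, leaving a large-$p$ tail that one cannot estimate directly. Hooley's key input is GRH for the Dedekind zeta functions $\zeta_{K_d}$, which upgrades the Chebotarev error term to something of size $O\bigl( x^{1/2} \log(d \cdot x) \bigr)$. This is strong enough to push the inclusion-exclusion out to $p \le x^{1/2}/\log^2 x$, and an elementary Brun--Titchmarsh-style bound handles the remaining ``large $p$'' tail.

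The principal obstacle is therefore GRH itself, which is not available, so an unconditional proof along these lines is not yet within reach. For context, Gupta--Murty and Heath-Brown have obtained unconditional partial results showing that at least one member of any three multiplicatively independent, nonexceptional integers is a primitive root modulo infinitely many primes; this falls short of the full conjecture as stated but already suffices for many arithmetic applications, including, presumably, the sort of bounded-generation reduction the next section intends to run.
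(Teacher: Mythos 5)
There is nothing here for you to have proved: the paper states this as a \emph{conjecture}, explicitly notes that it is still an open problem, and merely records (citing Hooley) that it follows from a suitable generalization of the Riemann Hypothesis. It is then \emph{assumed}, together with the stronger \cref{ArtinArithProgConj}, in order to give an illustrative five-row-operation proof of bounded generation of $\SL\bigl(2,\integer[1/p]\bigr)$; the actual theorem of Carter--Keller--Paige is proved elsewhere by entirely different (and unconditional) means. So your decision not to claim an unconditional proof is exactly right, and your account of the situation matches the paper's.

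Your sketch of Hooley's conditional argument is essentially accurate: the translation of ``$r$ is not a primitive root mod $q$'' into complete splitting of $q$ in $\rational(\zeta_p, r^{1/p})$ for some $p \mid q-1$, the inclusion--exclusion over squarefree $d$ with Chebotarev in the composita, the GRH-strength error term needed to truncate at $p \le x^{1/2}/\log^2 x$ with a Brun--Titchmarsh treatment of the tail, and the positivity of the density constant under the hypotheses $r \neq -1$ and $r$ not a perfect square. One small caution: it is not quite true that these hypotheses force every $K_p$ to have maximal degree (e.g.\ $r$ a perfect cube shrinks $K_3$, and $r \equiv 1 \pmod 4$ creates an entanglement between $K_2$ and the cyclotomic layers); rather, the Euler product acquires a correction factor, and the hypotheses are what guarantee the corrected constant $A(r)$ is still positive. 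Your citation of the unconditional Gupta--Murty/Heath-Brown results is a reasonable aside, though the paper does not rely on them.
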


\begin{rem}
Although this conjecture is still an open problem, it is implied by a certain generalization of the Riemann Hypothesis \cite{Hooley-ArtinConj}.
\end{rem}

Dirichlet proved there are infinitely many primes in any (appropriate) arithmetic progression \cite[p.~61]{Serre-CourseArith}, and we will assume a stronger form of Artin's Conjecture that says $q$ can be chosen to be in any such arithmetic progression:

\begin{conj} \label{ArtinArithProgConj}
Let $a,b,r \in \integer$, such that 
	\begin{itemize}
	\item $\gcd(a,b) = 1$, 
	and
	\item $r \neq -1$ and $r$~is not a perfect $m$th power for any $m > 1$.
	\end{itemize}
Then there exist infinitely many primes~$q$ in the arithmetic progression $\{a + k b\}_{k \in \integer}$, such that $r$ is a primitive root modulo~$q$.
\end{conj}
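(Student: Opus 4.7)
Since this statement is a strong form of Artin's primitive root conjecture --- an assertion that is itself still open unconditionally --- my plan cannot be to give an unconditional proof, but rather to reduce it to the Generalized Riemann Hypothesis, following the strategy that Hooley pioneered for the bare Artin conjecture.

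The first step is to translate the primitive-root condition into a splitting condition in a tower of number fields. For a prime $q$ coprime to $rb$, the element $r$ fails to be a primitive root modulo $q$ if and only if there is some prime $\ell \mid q-1$ for which $r$ is an $\ell$-th power in $(\integer/q\integer)^*$; a standard Kummer-theory calculation identifies the latter condition with the condition that $q$ split completely in $K_\ell := \rational(\zeta_\ell,\, r^{1/\ell})$. Likewise, "$q \equiv a \pmod b$" is the condition that the Frobenius of $q$ in $\rational(\zeta_b)/\rational$ equal $\sigma_a$. So, writing $L_n := K_n \cdot \rational(\zeta_b)$ with $K_n := \rational(\zeta_n, r^{1/n})$ for squarefree $n$, both conditions are packaged into a single splitting condition in $L_n$.

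By Möbius inversion,
$$ \pi_r(x; a, b) \;=\; \sum_{n \text{ squarefree}} \mu(n)\, N(x; n, a, b), $$
where $\pi_r(x;a,b)$ counts the primes I am interested in and $N(x;n,a,b)$ counts primes $q \le x$ splitting completely in $L_n$ with $q \equiv a \pmod b$. Under GRH for the Dedekind zeta function of $L_n$, the effective Chebotarev density theorem gives
$$ N(x;n,a,b) \;=\; \delta_n\, \mathrm{Li}(x) \;+\; O\bigl(\sqrt{x}\,\log(nbx)\bigr), $$
with $\delta_n$ an explicit density expressed in terms of $[L_n:\rational]$. Truncating the sum at $n \le \sqrt{x}/(\log x)^2$ and bounding the tail by a sieve (or, in Hooley's original approach, by elementary bounds on large prime divisors of $q-1$) separates a genuine main term from a controllable error.

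The crux is that the main term $\sum_n \mu(n)\,\delta_n$ be strictly positive, and this is where the hypotheses enter: the assumption that $r$ is not a perfect $m$-th power for any $m>1$ forces $[K_\ell:\rational] = \ell(\ell-1)$ for all but finitely many $\ell$, so the Kummer extensions are suitably independent and the resulting Euler product converges to something nonzero; the condition $r \ne -1$ excludes a degenerate case where $r$ behaves like an $\ell$-th power for too many primes; and $\gcd(a,b)=1$ is needed merely so that the progression $\{a+kb\}$ contains primes at all. The main obstacle --- and the reason the conjecture remains open --- is GRH itself: every square-root-size error estimate above depends on it, and I know of no route to the necessary bounds without it.
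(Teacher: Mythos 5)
The paper does not prove this statement: it is explicitly labelled a conjecture (a strengthening of Artin's conjecture, which is itself open) and is simply \emph{assumed} in the subsequent argument, with only a citation to Hooley for the fact that the basic Artin conjecture follows from a generalized Riemann Hypothesis. So there is no proof in the paper to compare against, and your decision to aim for a GRH-conditional reduction in the style of Hooley is the only reasonable one. Your outline of that reduction (Kummer/Chebotarev translation, M\"obius inversion over squarefree $n$, effective Chebotarev under GRH for $L_n = \rational(\zeta_n, r^{1/n})\cdot\rational(\zeta_b)$, truncation plus sieve for the tail) is the standard and correct skeleton.

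There is, however, a genuine gap at exactly the step you wave away: the positivity of $\sum_n \mu(n)\delta_n$. You assert that $\gcd(a,b)=1$ is needed ``merely so that the progression contains primes at all'' and that the only obstacle is GRH. In fact the congruence condition and the Kummer condition are \emph{entangled}: the quadratic subfield $\rational(\sqrt{r_0})$ of $K_2$ (where $r_0$ is the squarefree part of~$r$) sits inside a cyclotomic field, so when its discriminant divides~$b$ the class of $a$ modulo~$b$ can already force $r$ to be a quadratic residue modulo every prime $q \equiv a \pmod b$, and hence never a primitive root. Concretely, take $r=5$, $a=1$, $b=5$: these satisfy all the stated hypotheses, yet for every prime $q\equiv 1 \pmod 5$ quadratic reciprocity gives $\bigl(\tfrac{5}{q}\bigr)=\bigl(\tfrac{q}{5}\bigr)=1$, so $5$ is a square modulo~$q$ and cannot be a primitive root. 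Thus the density in your Euler product is $0$, the statement as written is false for some admissible triples $(a,b,r)$, and no argument --- conditional or not --- can close this gap without adding a hypothesis excluding such entanglement (this is precisely the phenomenon analyzed by Lenstra and Moree for primitive roots in progressions, and it is a stronger obstruction than the one the paper's own remark records).
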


\begin{rem}
We assume $r$ is not a perfect $m$th power in order to avoid the following obstruction:
if $m > 1$, $a \equiv 1 \pmod{m}$, $m \mid b$, and $r$~is a perfect $m$th power, then $r$ is not a primitive root modulo any prime in the arithmetic progression $\{a + k b\}_{k = 0}^\infty$.
\end{rem}

To avoid the need for any Algebraic Number Theory, we will prove a slight variation of \cref{SL2OBddGen} that replaces the algebraic number~$\alpha$ with a rational number, namely, $1/p$:

\begin{thm}[Carter-Keller-Paige \cite{CKP,Morris-CKP}]
If $p$ is any prime, then every matrix in $\SL \bigl( 2, \integer[1/p] \bigr)$ can be reduced to the identity matrix with a bounded number of $\integer[1/p]$ row operations.
\end{thm}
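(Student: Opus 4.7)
The strategy is to show that any
$M = \begin{bmatrix} a & b \\ c & d \end{bmatrix} \in \SL(2, \integer[1/p])$
can be reduced to the identity using a bounded number of row operations --- equivalently, that $\U$ and~$\V$ boundedly generate $\SL(2, \integer[1/p])$. After one or two preliminary operations to ensure $a \neq 0$ and $c \neq 0$, I would write $a = a_0\,p^{-n}$ and $c = c_0\,p^{-m}$ with $a_0, c_0 \in \integer$ coprime to~$p$. The determinant condition $ad - bc = 1$ forces $\gcd(a_0, c_0) = 1$ in~$\integer$, because any common factor must divide a power of~$p$, yet neither $a_0$ nor $c_0$ does.

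The crucial step is to transform the bottom-left entry into a well-chosen prime. Using the isomorphism $\integer[1/p]/(a_0) \iso \integer/(a_0)$ (valid because $p$ is a unit modulo~$a_0$), let $\bar c \in \integer/(a_0)$ denote the reduction of~$c$; then $\gcd(\bar c, a_0) = \gcd(c_0, a_0) = 1$. I would apply \cref{ArtinArithProgConj} to the arithmetic progression $\{\bar c + k a_0\}_{k \in \integer}$ with $r = p$ (a prime, hence neither $-1$ nor a perfect $m$th power for $m > 1$) to produce a rational prime $q \neq p$ with $q \equiv \bar c \pmod{a_0}$ and $p$ a primitive root modulo~$q$. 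The congruence guarantees $q - c \in a_0\,\integer[1/p] = a\,\integer[1/p]$, so $y := (q - c)/a \in \integer[1/p]$, and a single $\V$-operation replaces the first column $(a, c)$ by $(a, q)$.

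Next, the determinant identity for the updated matrix forces $a$ to be a unit modulo~$q$, so the primitivity of~$p$ mod~$q$ furnishes an integer~$k$ with $p^k \equiv a \pmod{q}$. Setting $x := (p^k - a)/q \in \integer[1/p]$, one $\U$-operation yields first column $(p^k, q)$; since $p^k$ is a unit in $\integer[1/p]$, one further $\V$-operation with shift $-q\,p^{-k}$ clears the bottom-left entry. The resulting upper-triangular matrix has diagonal entries $p^k$ and $p^{-k}$, both units of $\integer[1/p]$, and a standard Whitehead-type identity expresses such a matrix as a product of at most a bounded (in fact absolute) number of elementary matrices in $\U \cup \V$, which completes the reduction.

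The main obstacle is the second step: producing a prime~$q$ that simultaneously lies in the prescribed arithmetic progression and has $p$ as a primitive root. This is precisely the content of \cref{ArtinArithProgConj}, and it is the only place where genuine number theory enters --- which explains why the theorem here rests on an unproven conjecture. Every other step involves only routine row-reduction identities in $\integer[1/p]$ together with the determinant constraint.
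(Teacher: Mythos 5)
Your proposal is correct and follows essentially the same route as the paper's: apply the strengthened Artin conjecture (\cref{ArtinArithProgConj}) with $r=p$ to turn one entry of the first column into a prime~$q$ having $p$ as a primitive root, use that primitivity to turn the other entry into a power of~$p$, and finish with a bounded number of further operations. The only differences are that you carry out the argument for general $\integer[1/p]$ entries (the paper assumes integer entries ``for simplicity'') and that your endgame passes through a Whitehead-type identity for $\mathrm{diag}(p^k,p^{-k})$, whereas the paper uses the unit power of~$p$ to reset the remaining first-column entry to~$1$ before clearing, which saves a couple of row operations but is otherwise the same idea.
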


\begin{proof} 
Let $\begin{bmatrix} a &c \\ b & d \end{bmatrix} \in \SL \bigl( 2, \integer[1/p] \bigr)$. Assume, for simplicity, that $a,b,c,d \in \integer$. 
We explain how to reduce this matrix to the identity with only five row operations:
	$$ \begin{bmatrix} a & c \\ b & d \end{bmatrix}
	\stackrel{\textstyle 1}{\leadsto}
	\begin{bmatrix} q & * \\ b & d \end{bmatrix}
	\stackrel{\textstyle 2}{\leadsto}
	\begin{bmatrix} q & * \\ p^\ell & * \end{bmatrix}
	\stackrel{\textstyle 3}{\leadsto}
	\begin{bmatrix} 1 & * \\ p^\ell & * \end{bmatrix}
	\stackrel{\textstyle 4}{\leadsto}
	\begin{bmatrix} 1 & * \\ 0 & * \end{bmatrix}
	\stackrel{\textstyle 5}{\leadsto}
	\begin{bmatrix} 1 & 0 \\ 0 & 1 \end{bmatrix}
	.$$

1) Letting $r = p$ in \cref{ArtinArithProgConj}, we know there is some $k \in \integer$, such that $q = a + kb$ is prime, and $p$~is a primitive root modulo~$q$. Our first row operation adds $k$~times the second row to the first row.

2) Now, since $p$ is a primitive root modulo~$q$, we know there exists $\ell \in \integer^+$, such that $p^\ell \equiv  b \pmod{q}$. This means there exists $k' \in \integer$, such that $p^\ell = b  + k' q$. Our second row operation adds $k'$~times the first row to the second row.
 
3) Every element of $\integer[1/p]$ is a multiple of $p^\ell$ (since $p$ is a unit in this ring). Therefore, our third row operation can add anything at all to the top-left entry, so we can change this matrix entry to anything we want. We change it to a~$1$ (by subtracting $(q-1)p^{-\ell}$ times the second row from the first row).

4) Now, the fourth row operation can use the~$1$ in the top-left corner to kill the bottom-left entry by subtracting $p^\ell$ times the first row from the second row.

5) Since the original matrix {\smaller[2]$\begin{bmatrix} a &c \\ b & d \end{bmatrix}$} is in $\SL \bigl( 2, \integer[1/p] \bigr)$, and the row operations we applied do not affect the determinant, we know that the determinant of this matrix is~$1$. Therefore, the bottom-right entry must be~$1$. So the fifth row operation can kill the top-right entry. 
\end{proof}

\begin{rem}[Carter-Keller \cite{CarterKeller-BddElemGen,CarterKeller-ElemExp}] \label{SL3ZBddGen}
If $n \ge 3$, then every matrix in $\SL(n,\integer)$ can be reduced to the identity by using no more than $\frac{1}{2}(3n^2 - n) + 36$ integer row operations.
Thus, although the Euclidean Algorithm will use an unbounded number of row operations, Carter and Keller showed that the extra freedom provided by larger matrices can be exploited to find a different algorithm that uses only a bounded number.
\end{rem}


\section{Bounded orbits and a proof for $\SL \bigl( 2 , \integer[\alpha] \bigr)$}

We have seen that the subgroups $\U$ and~$\V$ boundedly generate $\SL \bigl( 2 , \integer[\alpha] \bigr)$. The other ingredient in our proof of \cref{SL2ANoAct} is that these subgroups have bounded orbits:

\begin{thm}[Lifschitz-Morris \cite{LMActOnLine}] \label{BddOrbs}
Let\/ $\Gamma$ be a finite-index subgroup of\/ $\SL \bigl( 2 , \integer[\alpha] \bigr)$. If\/ $\Gamma$ acts on\/~$\real$, then every $\U$-orbit is a bounded set, and every $\V$-orbit is a bounded set.
\end{thm}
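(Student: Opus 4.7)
The plan is to argue by contradiction: assume some $\U$-orbit in $\real$ is unbounded, and use the normalizer of $\U$ inside $\Gamma$ together with the Galois structure of $\integer[\alpha]$ to derive a contradiction. First I would produce a useful normalizer. By Dirichlet's Unit Theorem (using that $\alpha$ is a real algebraic integer of degree at least~$2$), the group $\integer[\alpha]^\times$ is infinite, so it contains a unit $\lambda$ of infinite order. Replacing $\lambda$ by a suitable power, the diagonal element $d := \mathrm{diag}(\lambda, \lambda^{-1})$ lies in~$\Gamma$ and normalizes $\U \cap \Gamma$ via $d\, u_t\, d^{-1} = u_{\lambda^2 t}$; note also that $\U \cap \Gamma$ has finite index in $\U \iso \integer[\alpha]$.

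Second, I would extract a translation homomorphism from an unbounded orbit. Since $\U \cap \Gamma$ is torsion-free abelian, its common fixed-point set $F \subseteq \real$ is closed, and $\real \setminus F$ is a disjoint union of open intervals on which $\U \cap \Gamma$ acts without common fixed points. On each such interval, H\"older's theorem conjugates the restricted action to an honest action by translations. An unbounded $\U$-orbit forces at least one component $I$ on which the resulting ``translation amount'' gives a nonzero homomorphism $\phi \colon \U \cap \Gamma \to (\real, +)$; extending $\integer$-linearly then produces a nonzero homomorphism $\phi \colon \integer[\alpha] \to \real$.

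Third, I would combine these to derive a contradiction. Since $d$ normalizes $\U \cap \Gamma$, it permutes the components of $\real \setminus F$; after replacing $d$ by a power we may assume $d(I) = I$. Via the H\"older conjugacy on $I$, $d|_I$ becomes a self-conjugator of a dense translation group of an interval, hence acts as an affine map $x \mapsto c\, x + a$ for some $c \in \real^\times$. The relation $d\, u_t\, d^{-1} = u_{\lambda^2 t}$ on $I$ then reads $\phi(\lambda^2 t) = c\, \phi(t)$ for all $t \in \integer[\alpha]$. Writing $\phi = \sum_\sigma c_\sigma \sigma$ in the basis of Galois embeddings $\sigma \colon \rational[\alpha] \to \complex$, Dedekind's linear independence of characters yields $c_\sigma \,(\sigma(\lambda)^2 - c) = 0$ for every~$\sigma$, so $\phi$ is supported on a single embedding $\sigma_0$ with $\sigma_0(\lambda)^2 = c$. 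When $\integer[\alpha]^\times$ has rank at least~$2$, repeating with a second independent unit yields incompatible constraints and forces $\phi = 0$. In the remaining (quadratic) case, the joint affine action of $\U \cap \Gamma$ and $d$ on $I$, combined with bounded generation (\cref{SL2OBddGen}) and the (almost) triviality of the abelianization of the arithmetic group $\Gamma$, upgrades to a nontrivial homomorphism from a finite-index subgroup of $\Gamma$ into $\real$, a contradiction. An analogous argument using the opposite Borel subgroup handles the $\V$-orbits.

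The main obstacle is ensuring that some power of $d$ preserves a fixed-point-free interval~$I$; this is automatic when $\real \setminus F$ has only finitely many $d$-orbits of components, but in general one must analyze the $d$-dynamics on the family of components. Once that is in hand, the Galois-theoretic and bounded-generation inputs conclude the argument.
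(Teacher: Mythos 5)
There is a structural gap that sinks the argument as proposed: except for the vague final appeal to bounded generation, your entire proof lives inside the upper-triangular (Borel) subgroup $B$ of $\SL\bigl(2,\integer[\alpha]\bigr)$ --- you only use $\U$, its normalizing diagonal unit~$d$, and the resulting equivariance constraints on~$\phi$. No such argument can succeed, because $B\cap\Gamma$ genuinely does act on~$\real$ (orientation-preservingly) with unbounded $\U$-orbits: fix a real embedding $\sigma_0\colon\rational(\alpha)\to\real$ and let the matrix with diagonal entries $\lambda,\lambda^{-1}$ and upper-right entry~$t$ act affinely by $x\mapsto\sigma_0(\lambda)^2x+\sigma_0(\lambda t)$. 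Here the $\U$-orbit of~$0$ is the dense set $\sigma_0\bigl(\integer[\alpha]\bigr)$, and the homomorphism $\phi=\sigma_0|_{\integer[\alpha]}$ is supported on a single embedding and satisfies $\phi(\lambda^2t)=\sigma_0(\lambda)^2\phi(t)$ \emph{simultaneously for every unit}~$\lambda$ (with $c=c_\lambda=\sigma_0(\lambda)^2$). So your ``incompatible constraints'' in the rank~$\ge2$ case are not incompatible, and the quadratic case ends with an unsubstantiated ``upgrade'' to a homomorphism from a finite-index subgroup of~$\Gamma$ into~$\real$ --- the group you have constrained is solvable and of infinite index, and bounded generation by itself manufactures no such homomorphism. (There are also smaller, repairable issues: H\"older's theorem applies to \emph{free} actions and yields only a \emph{semi}-conjugacy, so $d|_I$ need not be affine and the translation-number homomorphism should instead come from an invariant Radon measure for the abelian group $\U\cap\Gamma$; and the eigenvalues $\sigma(\lambda)^2$ need not be distinct, so $\phi$ need not be supported on a single embedding. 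But the structural problem above is the fatal one.)

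The proof has to play $\U$ against~$\V$, and that is exactly what the paper does: the same diagonal unit $d=\mathrm{diag}(\lambda,\lambda^{-1})$ that expands $\U$ under conjugation simultaneously contracts~$\V$. One shows that if some $\U$-orbit is unbounded above then there is a single point~$x$, fixed by~$d$, whose $\U$-orbit and $\V$-orbit are both unbounded above; choosing $\overline u\in\U$ and $\underline v\in\V$ with $x<\overline u(x)<\underline v(x)$ and applying~$d^n$, order-preservation gives $\bigl(d^n\overline u\,d^{-n}\bigr)(x)<\bigl(d^n\underline v\,d^{-n}\bigr)(x)$ for all~$n$, while the left side tends to $+\infty$ along the unbounded $\U$-orbit and the right side converges. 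Your Dirichlet-unit normalizer and translation-number setup are reasonable ingredients, but the contradiction must come from this expansion/contraction interplay between the two opposite unipotent subgroups, not from Galois constraints on a homomorphism defined using $\U$ alone.
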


Before discussing the proof of this theorem, let us explain how it is used: 

\begin{proof}[Proof of \cref{SL2ANoAct}]
Suppose $\Gamma$ has a nontrivial action on~$\real$. (This will lead to a contradiction.) Pretend, for simplicity, that $\Gamma$ is all of $\SL \bigl( 2 , \integer[\alpha] \bigr)$, instead of being a finite-index subgroup.

\setcounter{step}{0}

\begin{step} \label{AssumeNoFP}
We may assume the action has no fixed points.
\end{step}
Let $F$ be the set of fixed points of the $\Gamma$-action on~$\real$. Then $F$ is obviously a closed set, so its complement is open. (Also, the complement is nonempty, because the $\Gamma$-action is nontrivial.) Thus, if we let $I$ be a connected component of the complement, then $I$ is an open interval in~$\real$. It is easy to see that $I$ is $\Gamma$-invariant (because, by definition, the endpoints of~$I$ are fixed points for~$\Gamma$). So $\Gamma$ acts on~$I$. 

By definition, $I$ is contained in the \emph{complement} of the set of fixed points, so the $\Gamma$-action on~$I$ has no fixed points. Since $I$ is homeomorphic to~$\real$, this provides an action of~$\Gamma$ on~$\real$ with no fixed points.

\begin{step} \label{OrbitsBdd}
We show that every $\Gamma$-orbit on~$\real$ is bounded.
\end{step}
Fix some $p \in \real$. 
	\begin{itemize}
	\item From \cref{BddOrbs}, we know that the $\V$-orbit of~$p$ is bounded, so it has a finite infimum~$a_1$ and a finite supremum~$b_1$. Thus, $\V p$ is contained in the compact interval $[a_1,b_1]$.
	\item From \cref{BddOrbs}, we know that $\U$-orbit of~$a_1$ has a finite infimum~$a_2$, and the $\U$-orbit of~$b_1$ has a finite supremum~$b_2$. Since every element of~$\Gamma$ acts via an order-preserving homeomorphism of~$\real$, we know that $\U\V p \subseteq [a_2,b_2]$.
	\item Continuing in this way, we see that $(\U\V)^n p$ is contained in a compact interval $[a_{2n}, b_{2n}]$ for every $n \in \integer^+$.
	\end{itemize}
However, since $\U$ and~$\V$ boundedly generate~$\Gamma$ (see \cref{SL2OBddGen}), we know there is some~$n$, such that $(\U\V)^n = \Gamma$. Therefore, the $\Gamma$-orbit of~$p$ is a bounded set.

\begin{step}
We obtain a contradiction.
\end{step}
Fix some $p \in \real$. From \cref{OrbitsBdd}, we know $\Gamma p$ is a bounded set, so it has a finite supremum~$b$. Since $\Gamma p$ is a $\Gamma$-invariant set, and $b$~is a point that is defined from this set, we know that the point~$b$ must be fixed by~$\Gamma$. This contradicts \cref{AssumeNoFP}, which tells us that there are no fixed points.
\end{proof}

Instead of actually proving \cref{BddOrbs}, we will prove a simpler version that replaces $\alpha$ with the rational number $1/p$ (much as in \cref{BddGenSL2ASect}):

\begin{thm}[Lifschitz-Morris \cite{LMActOnLine}] 
Let\/ $\Gamma = \SL \bigl( 2 , \integer[1/p] \bigr)$ {\upshape(}or a finite-index subgroup{\upshape)}, where $p$~is prime. If\/ $\Gamma$ acts on\/~$\real$, then every $\U$-orbit is a bounded set, and every $\V$-orbit is a bounded set.
\end{thm}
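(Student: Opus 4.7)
Since the Weyl element $w = \begin{bmatrix} 0 & -1 \\ 1 & 0 \end{bmatrix}$ conjugates $\U$ to $\V$ (up to inversion), the $\V$-orbit statement will follow from the $\U$-orbit statement by a symmetric argument, so I focus on $\U$. My plan is to prove the much stronger statement that $\U$ acts \emph{trivially} on $\real$, which immediately makes every $\U$-orbit a singleton (hence bounded).

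Two structural relations in $G := \SL \bigl( 2, \integer[1/p] \bigr)$ drive the argument: the Borel relation $T u T^{-1} = u^{p^2}$ for every $u \in \U$, where $T = \begin{bmatrix} p & 0 \\ 0 & 1/p \end{bmatrix}$; and the Weyl relation $w T w^{-1} = T^{-1}$, where $w$ has order $4$ in $G$ (since $w^2 = -\Id$). For the case $\Gamma = G$, writing $\varphi \colon G \to \Homeo_+(\real)$ for the action, the proof is then a three-line cascade, each step invoking \cref{TorsionNoAct} (which says that $\Homeo_+(\real)$ is torsion-free):
\begin{itemize}
\item $\varphi(w)^4 = \Id$ forces $\varphi(w) = \Id$;
\item consequently $\varphi(T) = \varphi(w T w^{-1}) = \varphi(T)^{-1}$, so $\varphi(T)^2 = \Id$ and then $\varphi(T) = \Id$;
\item consequently, for each $u \in \U$, $\varphi(u) = \varphi(T u T^{-1}) = \varphi(u)^{p^2}$, so $\varphi(u)^{p^2-1} = \Id$ and then $\varphi(u) = \Id$.
\end{itemize}

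For a finite-index subgroup $\Gamma \le G$, neither $w$ nor $T$ need lie in $\Gamma$ (indeed $\Gamma$ may be torsion-free, excluding $w$ altogether), so the cascade cannot be applied directly. I would handle this by passing to the induced action: from the action of $\Gamma$ on $\real$ construct an action of $G$ on the disjoint-union $1$-manifold $M := G \times_\Gamma \real$, arranged to be orientation-preserving on each component, and re-run the torsion chain inside $\Homeo_+(M)$. The subtlety is that $\varphi(w)$ and $\varphi(T)$ may permute components of $M$, so the torsion-freeness step must be combined with a careful analysis of the permutation representation of $G$ on $G/\Gamma$ (which is a finite set on which $w$ still has order dividing $4$, etc.); the ultimate conclusion is that $\U$ acts trivially on $M$, whence $\U \cap \Gamma$ acts trivially on the distinguished component $\real$, and a final use of torsion-freeness together with finite index of $\U \cap \Gamma$ in $\U$ extends this to the full $\U$-orbit statement in $\Gamma$.

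The main obstacle will therefore be this finite-index reduction — the bookkeeping for the induced action on $M$ and for the component-permuting behaviour of $\varphi(w)$ and $\varphi(T)$. By contrast, the algebraic core of the argument for $\Gamma = G$ is very short and entirely transparent: every nontrivial fact used is already in the paper.
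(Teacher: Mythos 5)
Your cascade is correct for the full group $\Gamma = \SL\bigl(2,\integer[1/p]\bigr)$: the relations $w^4=\Id$, $wTw^{-1}=T^{-1}$ and $TuT^{-1}=u^{p^2}$, fed through \cref{TorsionNoAct}, do force $\varphi(w)=\varphi(T)=\varphi(u)=\Id$, and this is a nice observation. But the full group contains $-\Id$ and much other torsion, so this case carries almost none of the theorem's content; the substance is entirely in the finite-index case, and there your plan has a genuine gap rather than mere bookkeeping. The engine of the cascade is torsion-freeness of the target, and for the induced action the target is $\Homeo_+(M)$ with $M$ a disjoint union of $[G:\Gamma]$ lines. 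That group is very far from torsion-free: any orientation-preserving permutation of the components is a torsion element. So the very first step, $\varphi(w)^4=\Id \Rightarrow \varphi(w)=\Id$, fails whenever the image of $w$ in the permutation action on $G/\Gamma$ is nontrivial --- which it typically is, since the finite quotients of $G$ are essentially the groups $\SL(2,\integer/N)$ and $w$ survives in them. Without $\varphi(w)=\Id$ you learn only that $\varphi(T)$ restricted to a component $\real_i$ is conjugate, via a homeomorphism $\real_i\to\real_{j}$ onto a possibly different component, to $\varphi(T)^{-1}|_{\real_j}$; that does not make $\varphi(T)$ an involution, and the cascade stalls before reaching step~3.

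The obstruction is not cosmetic. The only relation that reliably survives inside a finite-index subgroup is $T^m u T^{-m}=u^{p^{2m}}$ for suitable $m$ with $T^m,u\in\Gamma$, and this is a Baumslag--Solitar relation: $\langle a,t\mid tat^{-1}=a^N\rangle\iso\integer[1/N]\rtimes\integer$ acts faithfully on $\real$ by the affine maps $x\mapsto N^kx+b$, with the ``unipotent'' $a$ acting as $x \mapsto x+1$, whose orbits are unbounded. Hence no torsion argument built from that relation alone can force $u$ to act trivially, or even to have bounded orbits; some genuinely dynamical input is unavoidable. That is exactly what the paper's proof supplies: assuming a $\U$-orbit is unbounded above, it produces a single point $x$ fixed by $\pp$ whose $\U$- and $\V$-orbits are both unbounded above, and then plays the expansion $\pp^{n}\overline{u}\pp^{-n}\to\overline{\infty}$ against the contraction $\pp^{n}\underline{v}\pp^{-n}\to\underline{0}$ to contradict the fact that $\pp^n$ preserves the order of $\overline{u}(x)$ and $\underline{v}(x)$. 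I would abandon both the triviality claim and the induced action, and instead work out that dynamical argument --- in particular, how one finds the common point $x$.
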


\begin{proof}[Idea of proof]
Pretend, for simplicity, that $\Gamma$ is all of $\SL \bigl( 2 , \integer[1/p] \bigr)$, instead of being a finite-index subgroup.
For $u,v \in \integer[1/p]$, let
	$$\overline u = \begin{bmatrix}1 & u \\ 0 & 1 \end{bmatrix} \in \U
	\text{\quad and\quad}
	\underline v =  \begin{bmatrix}1 & 0 \\ v & 1\end{bmatrix} \in \V
	. $$
Also let
	$$ \pp = \begin{bmatrix}p & 0 \\ 0 & 1/p\end{bmatrix} \in \Gamma .$$
A simple calculation shows
	\begin{align} \label{CommRelns}
	\text{$\pp^{n} \overline{u} \pp^{-n} \to \overline{\infty}$
	\quad and\quad
	$\pp^{n} \underline{v} \pp^{-n} \to \underline{0}$
	\qquad
	as $n \to \infty$}
	. \end{align}

Suppose some $\U$-orbit is not a bounded set. Then either it is not bounded above, or it is not bounded below. Assume, without loss of generality, that it is not bounded above. Since $\V$ is conjugate to~$\U$, this implies that some $\V$-orbit is also not bounded above. 
In fact, it can be shown that there is a single point $x \in \real$, such that 
	\begin{itemize}
	\item both the $\U$-orbit and the $\V$-orbit of~$x$ are not bounded above,
	and
	\item $\pp$ fixes~$x$.
	\end{itemize}

Fix $\overline u \in \U$ with $\overline u (x) > x$. Since the $\V$-orbit of~$x$ is not bounded above, we can choose $\underline v \in \V$ with $\overline u (x) < \underline v(x)$. Then, since $\pp$ is order-preserving, we have
	\begin{align*}
	\pp^n \bigl( \overline{u} (x) \bigr) < \pp^n \bigl( \underline{v}(x) \bigr)
	. \end{align*}
However, as $n \to \infty$, we have
	\begin{align*}
	\pp^n \bigl( \overline{u} (x) \bigr) 
	&=  (\pp^n \overline{u} \pp^{-n})(x)
	&& \text{($\pp$ fixes $x$)}
	\\&\to \overline{\infty}(x) 
	&& \text{(\ref{CommRelns})}
	\\&\to \infty  
	&& \text{($\U$-orbit is not bounded above)}
	\\
\intertext{and}
	\pp^n \bigl( \underline{v} (x) \bigr) 
	&=  (\pp^n \underline{v} \pp^{-n})(x)
	\to   \underline{0}(x) 
	. \end{align*}
Therefore $\infty$ is less than the finite number~$\underline{0}(x)$. This is a contradiction.
\end{proof}

\begin{exer} \label{BddGenIsom}
 Assume $\Gamma$ is boundedly generated (by cyclic subgroups).
Show that if $\Gamma$ acts by \emph{isometries} on a metric space~$X$,
and every cyclic subgroup has a bounded orbit on~$X$,
 then every $\Gamma$-orbit on~$X$ is bounded.
\end{exer}

\section{Implications for other arithmetic groups of higher rank} 

\Cref{SLnZNotLO,SL2ANoAct} each provide examples of arithmetic groups that cannot act on the line. In both cases, the proof was based on unipotent elements, which means that they only apply to arithmetic groups that are \emph{not} cocompact (cf. \cref{CocpctOpen}). To complete the treatment of such groups, it will suffice to consider only a few more examples:

\begin{thm}[Chernousov-Lifschitz-Morris \cite{ChernousovLifschitzMorris-AlmMin}] 
If\/ $\Gamma$ is any noncocompact, irreducible arithmetic group, and\/ $\Rrank \Gamma > 1$, then\/ $\Gamma$ contains a finite-index subgroup of either\/ $\SL \bigr(2, \integer[\alpha] \bigr)$ {\upshape(}for some~$\alpha${\upshape)} or a noncocompact arithmetic subgroup of either\/ $\SL(3,\real)$ or\/ $\SL(3,\complex)$. 
\end{thm}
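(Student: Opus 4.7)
The plan is to apply the classification of semisimple algebraic groups over number fields via the Tits index. Since $\Gamma$ is irreducible arithmetic, up to commensurability I may assume $\Gamma$ is the group of integer points of a $\rational$-almost simple algebraic $\rational$-group $\aG$. Restriction of scalars lets me write $\aG = \mathrm{Res}_{k/\rational} \mathbf{H}$ for some absolutely almost simple $\mathbf{H}$ defined over a number field $k$, with $\aG(\integer)$ commensurable with $\mathbf{H}(\orders_k)$. Under this identification $\Qrank \Gamma = \rank_k \mathbf{H}$, while $\Rrank \Gamma = \sum_{v \mid \infty} \rank_{k_v} \mathbf{H}$, so the hypotheses become $\rank_k \mathbf{H} \geq 1$ together with $\sum_{v \mid \infty} \rank_{k_v} \mathbf{H} \geq 2$.

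I would then split into two cases based on the value of $\rank_k \mathbf{H}$. In the case $\rank_k \mathbf{H} \geq 2$, the relative root system of $\mathbf{H}$ over $k$ has rank at least $2$, and a pass through the Tits classification of $k$-isotropic almost simple groups produces, for every admissible type, a $k$-subgroup $\mathbf{L} \subseteq \mathbf{H}$ that is $k$-isogenous to $\SL_3$: typically $\mathbf{L}$ arises as the Levi of a minimal $k$-parabolic in the cases where the relative root system already contains a type-$A_2$ component, and otherwise from the subgroup generated by root subgroups attached to a closed rank-$2$ subsystem of type $A_2$. Taking $\orders_k$-points and projecting to a chosen archimedean place of $k$ then exhibits a noncocompact arithmetic subgroup of $\SL(3,\real)$ or $\SL(3,\complex)$ inside $\Gamma$.

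In the case $\rank_k \mathbf{H} = 1$, the condition $\Rrank \Gamma > 1$ forces either that $k$ is a proper extension of $\rational$, or that some archimedean completion $k_v$ satisfies $\rank_{k_v} \mathbf{H} \geq 2$. In the first sub-case, a $k$-root group in $\mathbf{H}$ generates a $k$-subgroup isogenous to $\SL_2$, whose $\orders_k$-points contain a finite-index subgroup of $\SL(2, \integer[\alpha])$ for any real algebraic integer $\alpha$ with $\rational(\alpha) \subseteq k$. In the second sub-case, the desired $\SL(2, \integer[\alpha])$-subgroup is not visible from the $\rational$-relative root system and must be dug out of the anisotropic kernel: one uses a hyperbolic-plane or isotropic-line construction applied to the Hermitian, quadratic, or skew-Hermitian form underlying the anisotropic kernel to produce an $\SL_2$ defined over a suitable auxiliary number field, whose integer points land inside $\Gamma$.

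The principal obstacle will be precisely this second sub-case. It forces an exhaustive, type-by-type inspection of the $\rational$-rank-$1$ forms whose real rank at some archimedean place exceeds $1$, and, for the exceptional types, it requires ruling out those anisotropic kernels too small to yield an irrational $\alpha$. This classification-based check is essentially the substantive content of the Chernousov-Lifschitz-Morris theorem; the reduction steps and the high-$\Qrank$ case are comparatively formal consequences of the Tits classification.
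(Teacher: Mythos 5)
First, a caveat about the comparison itself: the paper does not prove this theorem. It is imported verbatim from Chernousov--Lifschitz--Morris \cite{ChernousovLifschitzMorris-AlmMin}, so your proposal can only be measured against that source. Your outline does capture its top-level architecture (restriction of scalars, Godement's criterion converting noncocompactness into $\rank_k\mathbf{H}\ge 1$, and a case analysis organized by the Tits classification), and you correctly locate the hard work in the anisotropic kernel.

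There are, however, two places where the outline as written is wrong, not merely incomplete. In your first case you claim that $\rank_k\mathbf{H}\ge 2$ always produces a $k$-subgroup isogenous to $\SL_3$ from an $A_2$ subsystem of the relative root system. This fails when the relative root system has type $B_2$, $C_2$, or $BC_2$, since these contain no closed subsystem of type $A_2$; the basic counterexample is $\Gamma=\Sp(4,\integer)$, and $\Sp(4,\real)$ contains no copy of $\SL(3,\real)$ at all (the Lie algebra $\mathfrak{sl}(3,\complex)$ has no faithful $4$-dimensional symplectic representation). For such $\Gamma$ the theorem's conclusion is reached through the other branch --- $\Sp(4,\integer)$ contains a finite-index subgroup of some $\SL\bigl(2,\integer[\sqrt{d}\,]\bigr)$ via the Hilbert-modular embedding $\mathrm{Res}_{k/\rational}\SL_2\hookrightarrow\Sp_4$ --- and that subgroup is not generated by root groups of the $\rational$-split torus, so it is invisible to the construction you describe. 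Second, in your $\rank_k\mathbf{H}=1$ case, the sub-case ``$k$ a proper extension of $\rational$'' is not automatically finished: if $k$ is imaginary quadratic, the copy of $\SL_2$ over $k$ furnished by a root group or by Jacobson--Morozov yields only a Bianchi group $\SL(2,\mathcal{O}_k)$, which is an arithmetic subgroup of $\SL(2,\complex)$ and therefore an inadmissible conclusion (by \cref{ArithSL2CActs} such groups virtually act faithfully on~$\real$; this is exactly why the paper's standing Notation requires $\alpha$ to be real when it is quadratic over $\rational$). In that situation one must again mine the anisotropic kernel, e.g.\ for a maximal subfield of the underlying division algebra, just as in your second sub-case. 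Since you explicitly defer the entire anisotropic-kernel analysis --- which is the substantive content of \cite{ChernousovLifschitzMorris-AlmMin} --- what you have is a correct road map with a mislabelled ``easy'' region, not a proof.
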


Combining this with \cref{SL2ANoAct} establishes the following observation:

\begin{cor}
Proving the following very special case would establish \cref{NoActConj} under the additional assumption that\/ $\Gamma$ is not cocompact.
\end{cor}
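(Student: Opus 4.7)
The corollary is essentially a bookkeeping consequence of the immediately preceding theorem combined with \cref{SL2ANoAct}, so my plan is to make the implication explicit. The ``very special case'' referred to must be the statement that no finite-index subgroup of a noncocompact arithmetic subgroup of $\SL(3,\real)$ or $\SL(3,\complex)$ admits a faithful action on $\real$. Granting that, I want to deduce the noncocompact instance of \cref{NoActConj}: no finite-index subgroup of a noncocompact, irreducible arithmetic group $\Gamma$ with $\Rrank \Gamma > 1$ acts faithfully on $\real$.

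First I would argue by contradiction. Suppose some finite-index subgroup $\dot\Gamma$ of $\Gamma$ admits a faithful action on $\real$; then, restricting the action, so does every subgroup of $\dot\Gamma$ (equivalently, by \fullcref{LOExers}{subgrp}, any subgroup of a left-orderable group is left-orderable, and \cref{ActIffLO} converts between the two pictures). Next I would invoke the displayed Chernousov-Lifschitz-Morris theorem applied to $\Gamma$, which produces a subgroup $H \le \Gamma$ that is a finite-index subgroup of either $\SL(2,\integer[\alpha])$, for some real irrational algebraic integer $\alpha$, or of a noncocompact arithmetic subgroup of $\SL(3,\real)$ or $\SL(3,\complex)$. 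Replacing $H$ by $H \cap \dot\Gamma$, which is still finite-index in $H$ and hence still of the required form, I may assume $H \le \dot\Gamma$, so $H$ also inherits a faithful action on $\real$.

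Two cases now remain, and both are immediate. In the first, \cref{SL2ANoAct} directly contradicts the faithful action of $H$, since $H$ is a finite-index subgroup of some $\SL(2,\integer[\alpha])$. In the second, the assumed ``very special case'' supplies the same contradiction. Either way, $\dot\Gamma$ cannot act faithfully on $\real$, which is the desired conclusion. The only substantive step is invoking the reduction theorem; the real obstacle is of course not in the corollary itself but in the deferred special case of $\SL(3,\real)$ and $\SL(3,\complex)$, which the argument templates of \cref{SLnZNotLO} and \cref{SL2ANoAct} do not directly handle---the former needs a Heisenberg subgroup of the very specific form exploited in \cref{LOHeis}, while the latter depends on bounded generation by exactly two unipotent subgroups, a phenomenon tied to the rank-one structure of $\SL(2,\cdot)$.
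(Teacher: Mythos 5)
Your proposal is correct and follows exactly the route the paper intends: the paper's ``proof'' of this corollary is just the one-line remark that the Chernousov--Lifschitz--Morris reduction theorem combined with \cref{SL2ANoAct} yields the claim, and you have simply made that combination explicit (including the harmless passage to $H \cap \dot\Gamma$ and the observation that finite-index subgroups of the relevant groups are again of the relevant form). No gaps.
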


\begin{conj} \label{LattSL3NoAct}
Noncocompact arithmetic subgroups of\/ $\SL(3,\real)$ and\/ $\SL(3,\complex)$ 
have no faithful action on\/~$\real$.
\end{conj}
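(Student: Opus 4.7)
My plan is to attack \cref{LattSL3NoAct} by imitating, as closely as possible, the structure of the proof of \cref{SL2ANoAct}: namely, exhibit two unipotent subgroups $\U$ and $\V$ of $\Gamma$ that (i) boundedly generate $\Gamma$, and (ii) have bounded orbits on $\real$ under any action of $\Gamma$, and then conclude by the three-step argument on page preceding \cref{BddOrbs}. In a noncocompact arithmetic subgroup $\Gamma$ of $\SL(3,\real)$ or $\SL(3,\complex)$, a natural choice for $\U$ and $\V$ is the intersection of $\Gamma$ with two opposite maximal $\rational$-unipotent subgroups of the ambient algebraic group. When the $\rational$-rank of $\Gamma$ is~$2$ (e.g.\ $\Gamma$ commensurable with $\SL(3,\integer)$) these are the standard upper- and lower-unitriangular subgroups, while in the $\rational$-rank~$1$ case they are Heisenberg-type unipotent radicals of opposite minimal $\rational$-parabolic subgroups.

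For the bounded generation ingredient, I would invoke (and, in the missing cases, attempt to establish) the Carter-Keller-Paige-style theorem for $\SL(3)$ over rings of algebraic integers. Carter and Keller have shown that $\SL(3,\integer)$ is boundedly generated by its standard elementary unipotents (\cref{SL3ZBddGen}), and the analogous statement for noncocompact $\rational$-rank~$1$ arithmetic lattices of $\SL(3,\real)$ or $\SL(3,\complex)$ should follow by a Kneser-Tits-type reduction — in the spirit of Tavgen's theorem — combined with the fact that bounded generation passes to finite-index subgroups \fullcsee{BddGenExers}{FinInd}. This reduction is the technical but arithmetic-in-spirit part of the argument, and does not directly interact with the action on $\real$.

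For the bounded orbits ingredient, I would choose a hyperbolic diagonalisable element $p \in \Gamma$ in a $\rational$-split torus that normalises both $\U$ and $\V$ and whose adjoint action contracts $\U$ (and expands $\V$) — the exact analogue of the element $\pp$ in the proof of \cref{BddOrbs}, so that $p^n \overline u p^{-n} \to e$ and $p^{-n} \underline v p^n \to e$ modulo the centres. Assuming for contradiction that some $\U$-orbit in $\real$ is unbounded, the goal is to find a single point $x \in \real$ fixed by $p$ such that the $\U$- and $\V$-orbits of $x$ are both unbounded above; then conjugating an inequality $\overline u(x) < \underline v(x)$ by $p^n$ and letting $n \to \infty$ produces the impossible inequality $+\infty \le \underline 0(x) < \infty$, exactly as in the $\SL(2,\integer[1/p])$ proof.

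The main obstacle, which I expect to absorb almost all of the genuine difficulty, is that in $\SL(3)$ the subgroups $\U$ and $\V$ are non-abelian Heisenberg groups. Such groups \emph{do} admit faithful, unbounded actions on $\real$ \fullcsee{HeisExers}{orderable}, so one cannot simply contract all of $\U$ to the identity; instead one must peel off the centre $Z(\U)$ and argue that $Z(\U)$-orbits and $\U/Z(\U)$-orbits are separately bounded, using \cref{LOHeis} to compare growth rates of generators. Equally delicate is the construction of the common $p$-fixed point $x$ at which both $\U$- and $\V$-orbits are simultaneously unbounded above: this requires a fixed-point analysis for the action of $\langle p \rangle$ on the closure of an unbounded $\U$-orbit, and it is here that the interplay between nilpotence of the unipotents and the dynamics of the Cartan element seems to meet the open frontier — making this step the one I expect to be the real obstruction to turning the sketch into a theorem.
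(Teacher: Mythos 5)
The statement you are attacking is \cref{LattSL3NoAct}, which the paper presents as an open \emph{conjecture}: there is no proof in the paper, only a conditional reduction, and your proposal in fact reproduces that reduction rather than closing the gap. Your overall strategy --- bounded generation by unipotent subgroups, bounded unipotent orbits, and then the three-step supremum argument --- is exactly the route the paper endorses. But you have located the difficulty in the wrong place. The half you flag as ``the real obstruction'' (taming the non-abelian Heisenberg unipotents, and producing a single point fixed by the Cartan element at which both the $\U$- and $\V$-orbits are unbounded above) is not the frontier: it is precisely the content of the Lifschitz--Morris theorem \cite{LMActOnLine} quoted in the paper, which says that \emph{if} some finite-index subgroup of~$\Gamma$ is boundedly generated by unipotent subgroups, \emph{then} $\Gamma$ has no faithful action on~$\real$. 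That dynamical work is done.

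The genuine gap is the bounded generation itself. When $\Qrank \Gamma = 2$, bounded generation is indeed Carter--Keller, but those groups contain a finite-index subgroup of $\SL(3,\integer)$ and are already handled by \cref{SLnZNotLO}; the entire content of \cref{LattSL3NoAct} lies in the $\rational$-rank one case. There, your assertion that bounded generation ``should follow by a Kneser--Tits-type reduction in the spirit of Tavgen's theorem'' does not go through: Tavgen-style inductions on parabolic subgroups require $\rational$-rank at least two to get started, and the Carter--Keller--Paige Mennicke-symbol machinery is tied to $\SL(2)$ and $\SL(n)$ over rings of $S$-integers, not to the rank-one forms (unitary groups of Hermitian forms over division algebras) that produce these lattices. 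Bounded generation by unipotents for noncocompact arithmetic groups of $\rational$-rank one is exactly the unresolved special case of Rapinchuk's 1989 conjecture (\cref{RapinchukConj}), and the paper explicitly notes that proving it just for arithmetic subgroups of $\SL(3,\real)$ and $\SL(3,\complex)$ would settle \cref{LattSL3NoAct} and hence the whole noncocompact case of \cref{NoActConj}. Until that is done, your argument is a (correct) restatement of the known conditional implication, not a proof.
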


One possible approach is to use bounded generation:

\begin{thm}[Lifschitz-Morris \cite{LMActOnLine}]
Let\/ $\Gamma$ be a noncocompact arithmetic subgroup of\/ $\SL(3,\real)$ or\/ $\SL(3,\complex)$. If some finite-index subgroup of\/~$\Gamma$ is boundedly generated by unipotent subgroups, then\/ $\Gamma$ does not have a faithful action on\/~$\real$.
\end{thm}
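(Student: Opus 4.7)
The plan is to follow the proof of \cref{SL2ANoAct} essentially line for line, replacing the two unipotent subgroups $\U, \V$ of $\SL \bigl( 2, \integer[\alpha] \bigr)$ by the finite collection $U_1, \ldots, U_k$ of unipotent subgroups of a finite-index subgroup $\Gamma' \le \Gamma$ whose bounded product is $\Gamma'$. Since a group has a faithful action on $\real$ if and only if each of its finite-index subgroups does (ignoring the difference between a group and its finite-index subgroups, as usual), and since bounded generation is inherited by and from finite-index subgroups \fullcsee{BddGenExers}{FinInd}, there is no loss of generality in assuming $\Gamma = (U_1 U_2 \cdots U_k)^n$ for some fixed $n$.

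Granting two ingredients — bounded generation by unipotents (the standing hypothesis) and the fact that each unipotent subgroup has only bounded orbits on any $\Gamma$-action on $\real$ (the main new input, analogous to \cref{BddOrbs}) — the proof transplants verbatim from \cref{SL2ANoAct}. Namely: supposing $\Gamma$ acts faithfully on $\real$, I would first restrict to a connected component of the complement of the $\Gamma$-fixed set in order to assume the action is fixed-point-free; then fix a base point $p$ and apply $U_1, U_2, \ldots, U_k, U_1, \ldots$ in turn, noting that at each stage the current image of $p$ lies inside a bounded interval whose endpoints are moved to new bounded endpoints by the next $U_i$ (since orientation-preserving homeomorphisms carry sup to sup and inf to inf, and each $U_i$-orbit of these endpoints is bounded). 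After at most $kn$ stages, bounded generation yields the full orbit $\Gamma p$ inside a bounded interval, and the supremum of this $\Gamma$-invariant bounded set is then a $\Gamma$-fixed point, contradicting fixed-point-freeness.

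The substantive new work — and the main obstacle — is the bounded-orbits lemma for a unipotent subgroup $U$ of a noncocompact arithmetic subgroup $\Gamma$ of $\SL(3,\real)$ or $\SL(3,\complex)$. The strategy is the one sketched for \cref{BddOrbs}: locate a semisimple element $a \in \Gamma$ normalizing $U$ whose conjugation dynamics satisfy $a^n u a^{-n} \to e$ as $n \to +\infty$ for $u \in U$, together with an analogous contracting statement for an ``opposite'' unipotent $V$ conjugate to $U$. Such an $a$ should exist because $\Gamma$ is noncocompact and therefore $\Qrank \Gamma \ge 1$, so $\Gamma$ meets the normalizer of each root unipotent subgroup in a nontrivial $\rational$-split torus; the rank-two Cartan of $\SL(3)$ provides enough flexibility to choose a single $a$ that contracts $U$ while expanding $V$. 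The argument then closes as in the sketch of \cref{BddOrbs}: assuming a $U$-orbit is unbounded above, one produces a common $a$-fixed point $x \in \real$ on which both $U$ and $V$ act unboundedly above, picks $\overline u \in U$ and $\underline v \in V$ with $\overline u(x) < \underline v(x)$, and derives a contradiction by comparing the limits of $a^n \bigl( \overline u (x) \bigr)$ and $a^n \bigl( \underline v (x) \bigr)$ as $n \to \infty$.

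The hard part will be pinning down this semisimple element $a$ inside the arithmetic lattice $\Gamma$ itself (not merely inside the ambient Lie group) and verifying the common-fixed-point claim for $U$ and $V$; this is precisely where the hypotheses of noncocompactness and the small rank of $\SL(3)$ are essential, paralleling the role of the diagonal element $\pp$ in the proof of \cref{BddOrbs}.
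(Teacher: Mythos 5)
The paper does not actually prove this theorem --- it is quoted from Lifschitz--Morris \cite{LMActOnLine} --- but your outline is exactly the strategy of that reference and of the paper's own proof of the analogous \cref{SL2ANoAct}: reduce to a fixed-point-free action, combine bounded generation with a bounded-orbits lemma for the unipotent subgroups to show every $\Gamma$-orbit is bounded, and take the supremum of an orbit to manufacture a fixed point. Your deduction from the two ingredients is complete and correct (one small caveat: the biconditional ``a group acts faithfully on $\real$ if and only if each of its finite-index subgroups does'' fails in the ``if'' direction, but you only use the true direction), and the bounded-orbits lemma that you rightly flag as the substantive obstacle is precisely the technical heart of \cite{LMActOnLine}, which you sketch at the same level of detail as the paper's own discussion of \cref{BddOrbs}.
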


This implies that \cref{LattSL3NoAct} is a consequence of the following fundamental conjecture in the theory of arithmetic groups:

\begin{conj}[Rapinchuk, 1989] \label{RapinchukConj}
If\/ $\Gamma$ is a noncocompact, irreducible arithmetic group, and\/ $\Rrank \Gamma > 1$, then\/ $\Gamma$ contains a finite-index subgroup that is boundedly generated by unipotent subgroups.
\end{conj}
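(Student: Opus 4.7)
The plan is to combine Margulis arithmeticity with a Tavgen-style induction on the $k$-rank, using \cref{SL2OBddGen} (Carter-Keller-Paige) as the building block. Throughout, we are free to pass to a finite-index subgroup, which allows us to assume torsion-freeness and a standard ``simply connected'' normal form. First I would reduce: after replacing $\Gamma$ by a commensurable group, Margulis arithmeticity writes $\Gamma = \aG(\mathcal{O}_S)$, where $\aG$ is a connected, simply connected, absolutely almost simple, $k$-isotropic algebraic group over a number field $k$, and $S$ is a finite set of places of $k$ containing every archimedean place. The hypothesis $\Rrank \Gamma > 1$ translates as $\sum_{v} \rank_{k_v} \aG > 1$ (summed over archimedean~$v$), and noncocompactness is equivalent to $\rank_k \aG \geq 1$. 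The argument then splits into the cases $\rank_k \aG \geq 2$ and $\rank_k \aG = 1$.

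In the case $\rank_k \aG \geq 2$, I would argue by induction on the $k$-rank, in the spirit of Tavgen. Fix a maximal $k$-split torus, a system $\Delta$ of simple $k$-roots, and for each $\alpha \in \Delta$ the corresponding $k$-root group $U_\alpha$. The inductive step uses a Bruhat-type decomposition relative to a fixed proper $k$-parabolic $P$: every element of $\aG(\mathcal{O}_S)$ can be written as a bounded product of elements of $P(\mathcal{O}_S)$ together with representatives of a finite set of Weyl elements, and the Levi factor of $P$ has strictly smaller $k$-rank, so the inductive hypothesis applies to it. The base case is a rank-one Levi subgroup, where bounded generation by $U_\alpha(\mathcal{O}_S)$ and $U_{-\alpha}(\mathcal{O}_S)$ must be established by an argument in the style of \cref{SL2OBddGen}; crucially, the ambient rank $\geq 2$ supplies enough $\mathcal{O}_S$-units to drive the Carter-Keller-Paige row reduction through, even when the rank-one Levi by itself would be problematic.

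In the complementary case $\rank_k \aG = 1$, the hypothesis $\Rrank \Gamma > 1$ forces $\mathcal{O}_S^\times$ to be infinite (from extra archimedean places contributing to the real rank, or from finite places inside $S$). Restriction of scalars then rewrites $\aG(\mathcal{O}_S)$ as the $\integer$-points of a $\rational$-isotropic rank-one group whose coefficient ring carries the required units, and a direct application of \cref{SL2OBddGen}---together with its known analogues for twisted forms of $\SL(2,\cdot)$ and for the $S$-arithmetic points of rank-one unitary or orthogonal groups---supplies the required bounded generation.

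The hard part will be the rank-one base case, and it is why the conjecture remains open. For non-quasi-split rank-one groups, for instance the norm-one subgroup of a quaternion or higher-dimensional central division algebra, no analogue of Carter-Keller-Paige is currently available; producing one seems to require extending the row-reduction and Euclidean-algorithm arguments of \cref{BddGenLect} to non-Euclidean, and sometimes noncommutative, coefficient rings. Conditional on such a uniform rank-one bounded generation theorem---perhaps again proved using Artin-type inputs on primitive roots as in \cref{ArtinArithProgConj}---the Bruhat induction of the higher-rank case together with the restriction-of-scalars reduction above should combine to yield bounded generation of $\Gamma$ by unipotent subgroups.
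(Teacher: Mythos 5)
You have not proved anything here, and neither does the paper: \cref{RapinchukConj} is stated as a \emph{conjecture} (Rapinchuk, 1989) precisely because it is open, and the paper offers no argument for it --- it only records that proving it (even just for arithmetic subgroups of $\SL(3,\real)$ and $\SL(3,\complex)$) would settle \cref{LattSL3NoAct} and hence the noncocompact case of \cref{NoActConj}. Your own text concedes the decisive point: the entire Tavgen-style induction bottoms out at a rank-one base case for which no bounded generation theorem exists, and you explicitly say that such a theorem ``is not currently available'' and that your conclusion is ``conditional'' on it. A conditional sketch whose missing hypothesis is exactly the reason the conjecture is open is not a proof; it is a restatement of the difficulty.

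To be concrete about where the argument fails: (i) for non-quasi-split $k$-rank-one groups --- norm-one groups of division algebras, anisotropic kernels of unitary and orthogonal groups --- there is no analogue of \cref{SL2OBddGen}, and the row-reduction/Euclidean-algorithm machinery of \cref{BddGenLect} does not transfer to noncommutative or non-Euclidean coefficient rings; (ii) your treatment of the case $\rank_k \aG = 1$ simply asserts that restriction of scalars plus ``known analogues for twisted forms'' covers everything, which is assuming the conclusion for the hardest class of groups; and (iii) even the unconditional input you rely on, \cref{SL2OBddGen}, is itself a deep nonconstructive theorem of Carter--Keller--Paige, while the short five-row-operation proof given in the text is conditional on the strengthened Artin conjecture \cref{ArtinArithProgConj}, which is also open. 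So the proposal is a reasonable survey of the known strategy (Tavgen induction reducing to rank one, with Carter--Keller--Paige as the model base case), but it establishes nothing beyond what was already known, and the statement remains a conjecture.
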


In fact, to establish \cref{LattSL3NoAct} (and therefore also the entire noncocompact case of \cref{NoActConj}), it would suffice to prove the special case of \cref{RapinchukConj} in which $\Gamma$ is an arithmetic subgroup of either $\SL(3,\real)$ or $\SL(3,\complex)$.


\lecture{What is an amenable group?} \label{AmenLect}

Amenability is a very fundamental notion  in group theory --- there are literally dozens of different definitions that single out exactly the same class of groups. We will discuss just a few of these many viewpoints, and, for simplicity, we will restrict our attention to \emph{discrete} groups that are countable, ignoring the important applications of this notion in the theory of topological groups. Much more information can be found in the monographs \cite{PatersonBook} and~\cite{PierBook}.

\section{Ponzi schemes}

Let us begin with an amusing example that illustrates one of the many definitions.
 
\begin{eg} \label{PonziFreeGrp}
Consider the free group  $F_2  = \langle a,b \rangle$, and let us assume that every element of the group starts with \$1. Thus, if $f_0(g)$ denotes the amount of money possessed by element~$g$ at time $t = 0$, then
	 $$ \text{$f_0(g)  = \$1$,   \quad for all~$g \in F_2$.} $$
Now, everyone will pass their dollar to the person next to them who is 
closer to the identity. (That is, if $g = x_1x_2\cdots x_n$ is a reduced word, with $x_i \in \{a^{\pm1}, b^{\pm1} \}$ for each~$i$, then $g$~passes its dollar to $g' = x_1x_2\cdots x_{n-1}$. The identity element has nowhere to pass its dollar, so it keeps the money it started with.) Then, letting $f_1$ denote the amount of money possessed now (at time $t = 1$), we have
	$$ \text{$f_1(g) =  \$3$  \quad for all~$g$ \  (except that $f_1(e) = \$5$)}. $$
Thus, everyone has more than doubled their money. Furthermore, this result was achieved by moving the money only a bounded distance. 
\end{eg} 

Such an arrangement is called a \emph{Ponzi scheme} on the group~$F_2$:

\begin{defn}
A \emph{Ponzi scheme} on a group~$\Gamma$ is a function $M \colon \Gamma \to \Gamma$, such that:
	\begin{enumerate}
	\item $M^{-1}(g) \ge 2$ for all $g \in \Gamma$
		$$ \text{(\emph{everyone doubles their money if each $g$ passes its dollar to $M(g)$}),} $$
		and
	\item there is a finite subset~$S$ of~$\Gamma$, such that $M(g) \in gS$ for all $g \in \Gamma$
		$$ \text{(\emph{money moves only a bounded distance}).} $$
	\end{enumerate}
\end{defn}

From \cref{PonziFreeGrp}, we know there is a Ponzi scheme on the free group~$F_2$. However, not all groups have a Ponzi scheme:

\begin{exer} \label{AbelNoPonzi}
There does not exist a Ponzi scheme on the abelian group~$\integer^n$.
\\ \hint{Any group with a Ponzi scheme must have exponential growth, because $f_t(g)$ is exponentially large, but the money moves only a linear distance.}
\end{exer}

More generally, we will see later that no solvable group has a Ponzi scheme (even though solvable groups can have exponential growth). This is because solvable groups are ``amenable\zz,'' and the nonexistence of a Ponzi scheme can be taken as the definition of amenability:

\begin{thm}[{}{Gromov \cite[p.~328]{Gromov-MetricStructures}}] 
\label{PonziIffNotAmenThm}
There exists a Ponzi scheme on\/~$\Gamma$ if and only if\/ $\Gamma$~is not amenable.
\end{thm}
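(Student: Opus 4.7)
Plan: The equivalence is essentially Tarski's theorem on paradoxical decompositions, rephrased in the graph-theoretic language of Hall's marriage theorem. I would interpret the existence of a Ponzi scheme as a combinatorial condition equivalent to failure of the Følner criterion for amenability (which I assume as the definition).

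Forward direction (Ponzi $\Rightarrow$ non-amenable): Suppose $M\colon \Gamma \to \Gamma$ is a Ponzi scheme with displacement set $S$. For any finite $A \subseteq \Gamma$, since $M(h) \in A$ forces $h \in AS^{-1}$, a double-count gives $2|A| \le |M^{-1}(A)| \le |AS^{-1}|$. Hence every finite $A$ satisfies $|AS^{-1} \setminus A| \ge |A|$, violating the right Følner criterion (take $F = S^{-1}$ and $\epsilon = 1$). So $\Gamma$ is not amenable.

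Backward direction (non-amenable $\Rightarrow$ Ponzi): Using the Følner characterization, non-amenability produces a finite $F \subseteq \Gamma$ with $e \in F$ and some $\epsilon > 0$ satisfying $|AF| \ge (1+\epsilon)|A|$ for every finite $A$. Iterating yields $|AF^n| \ge (1+\epsilon)^n |A|$, so for $n$ large enough and $S = (F^{-1})^n \cup \{e\}$, every finite $A$ obeys $|AS^{-1}| \ge 2|A|$. Now I would construct $M$ using Hall's marriage theorem in its \emph{harem} (polygamous) form. Set up the bipartite graph on $\Gamma_{\mathrm{don}} \sqcup \Gamma_{\mathrm{ben}}$ (two copies of $\Gamma$) with an edge $h \sim g$ precisely when $g \in hS$. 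The neighbourhood of a finite set $A$ of beneficiaries is exactly $AS^{-1}$, so the estimate $|AS^{-1}| \ge 2|A|$ is the deficiency condition guaranteeing that each beneficiary can be matched to (at least) two distinct donors. Define $M(h)$ to be the beneficiary assigned to donor $h$ by this matching, setting $M(h) = h$ on any unmatched donors (permissible because $e \in S$). Then $M(h) \in hS$ and $|M^{-1}(g)| \ge 2$ for every $g$, as required.

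The main obstacle is making the harem matching go through in the countably infinite setting, since the classical Hall statement is for finite bipartite graphs. Our graph is locally finite (every vertex has degree $|S|$), and I would handle the extension by a compactness/König's lemma argument: apply the finite harem theorem to an exhausting sequence of finite subgraphs of $\Gamma$ and extract a limiting matching by a diagonal procedure. Modulo this standard technicality (and the Følner characterization of amenability, which properly belongs to the background theory being introduced), the rest of the argument is routine bookkeeping.
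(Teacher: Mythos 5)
Your proposal is correct and takes essentially the same route as the paper, which delegates the proof to \fullcref{FolnerEx}{noPonzi} and \fullcref{FolnerEx}{Ponzi}: the forward direction is the same double-count of $M^{-1}(A)\subseteq AS^{-1}$ against the F\o lner condition, and the backward direction is the same iteration of the non-F\o lner inequality to get $\#(AS^{-1})\ge 2\,\#A$ followed by the defect (harem) form of Hall's marriage theorem, extended to the infinite graph by a compactness/diagonalization argument.
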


This theorem provides a nice description of what it means for a group to \emph{not} be amenable, but it does not directly provide any positive information about a group that \emph{is} amenable. Most of the other definitions we discuss are better for that.

\section{Almost-invariant subsets}

Instead of using Ponzi schemes, we will adopt the following definition:
	$$ \text{$\Gamma$ is amenable  $\iff$   $\Gamma$ has \emph{almost-invariant} finite subsets.} $$
To see what this means, let us consider an example:

\begin{eg}  
Let $\Gamma = \integer^2 = \langle a, b \rangle$, where $a = (1,0)$ and $b = (0,1)$.

If we let $F$ be a large ball in~$\Gamma$, then $F$ is very close to being invariant under the left-translation by~$a$ and~$b$:
	$$ \text{
	$\#(F \cap aF)  >  (1-\epsilon) \, \#F$
	\ and \ 
	$\#(F \cap bF)  >  (1-\epsilon) \, \#F$
	} , $$
where $\epsilon$ can be as small as we like, if we take~$F$ to be sufficiently large.

We say that $F$ is ``almost invariant:''
\end{eg}

\begin{defn}
Let $\Gamma$ be a group, and fix a finite subset~$S$ of~$\Gamma$ and some $\epsilon > 0$. A finite, nonempty subset~$F$ of~$\Gamma$ is \emph{almost invariant} if
	$$ \text{$\#(F \cap aF)  > (1-\epsilon) \, \#F$,   \quad $\forall a  \in S$} .$$
\end{defn}

\begin{defn} \label{AmenDefn}
$\Gamma$ is \emph{amenable} if and only if $\Gamma$ has  almost-invariant  finite subsets (for all finite~$S$ and all $\epsilon > 0$).
\end{defn}

\begin{exers} \label{FolnerEx}
Use \cref{AmenDefn} to show:
	\begin{enumerate}

	\item \label{FolnerEx-Free}
The free group  $F_2$  is \emph{not} amenable. 
 \hint{If $F$ is almost invariant, then the first letter of most of the words in~$F$ must be \emph{both~$a$ and~$b$}.}
 
	\item \label{FolnerEx-union}
If $\Gamma$ is amenable, $S$ is a finite subset of~$\Gamma$, and $\epsilon > 0$, then there exists a finite subset~$F$ of~$\Gamma$, such that $\#(SF) < (1 + \epsilon) \, \#F$, where 
		$$SF = \{\, sf \mid s \in S, f \in F\,\} .$$

	\item \label{FolnerEx-QI}
Amenability is invariant under quasi-isometry. (This means that you should assume $\Gamma_1$ is quasi-isometric to~$\Gamma_2$, and prove that $\Gamma_1$ is amenable if and only if $\Gamma_2$ is amenable.)
 \hint{Fix $c > 1$. Show $\Gamma$ is not amenable iff it has a finite subset~$S$, such that $\#(SF) \ge c \cdot \#F$ for every finite subset~$F$ of~$\Gamma$.}
 
	\item \label{FolnerEx-noPonzi}
Amenable groups do not have Ponzi schemes.
	
	\item \label{FolnerEx-Ponzi}
Nonamenable groups have Ponzi schemes.
	\hint{Suppose $A_1,\ldots,A_n$ are finite sets, and $a_1,\ldots,a_n \in \natural$. Show (by induction on $a_1 + \cdots + a_n$) that if $\# \bigcup_{i \in I} A_i \ge \sum_{i \in I} a_i$ for every $I \subseteq \{1,\ldots,n\}$, then there exists $A_i' \subseteq A_i$, such that $\#A_i' = a_i$ and $A_1',\ldots,A_n'$ are pairwise disjoint.}

	\end{enumerate}
\end{exers}

\begin{term} \ 
\noprelistbreak
\begin{enumerate}
\item Since the notion of ``almost invariant" depends on the choice of $S$ and~$\epsilon$, many authors say that $F$ is ``$(S,\epsilon)$-invariant\zz.''

\item An almost-invariant set can also be called a ``F\o lner set\zz.'' More precisely,
a sequence $\{F_n\}$ of nonempty, finite subsets of~$\Gamma$ is said to be a \emph{F\o lner sequence} if, for every finite subset~$S$ of~$\Gamma$, every $\epsilon > 0$, and every sufficiently large~$n$, the set~$F_n$ is $(S,\epsilon)$-invariant. (The set~$F_n$ is often called a ``F\o lner set\zz.'') Thus, \cref{AmenDefn} can be restated as saying that $\Gamma$ is amenable if and only if it has a F\o lner sequence.
\end{enumerate}
\end{term}

\section{Average values and invariant measures} \label{AvgValSect}

In many situations, it is difficult to directly employ the almost-invariant sets provided by \cref{AmenDefn}. This \lcnamecref{AvgValSect} provides some consequences that are often easier to apply. For example, every bounded function on~$\Gamma$ has an average value:

\begin{defn}
A \emph{mean} on $\ell^\infty(\Gamma)$ is a linear functional $A \colon \ell^\infty(\Gamma) \to \complex$, such that $A(\varphi)$ satisfies two axioms that would be expected of the average value of~$\varphi$:
	\begin{itemize}
	\item \emph{the average value of a constant function is that constant}
		$$\text{$A(c) =  c$ \quad if $c$ is a constant} ,$$
	and
	\item \emph{the average value of a positive-valued function cannot be negative}
		$$ \text{$A(\varphi) \ge 0$ \quad if $\varphi \ge 0$} .$$
	\end{itemize}
The mean is \emph{left-invariant} if $A \bigl( \varphi^g \bigr) =  A \bigl( \varphi \bigr)$ for all $\varphi \in \ell^\infty(\Gamma)$ and all $g \in \Gamma$, where $\varphi^g(x) = \varphi(gx)$.
\end{defn}

\begin{prop} \label{AmenAvgVal}
$\Gamma$ is amenable if and only if there exists a left-invariant mean on $\ell^\infty(\Gamma)$.
\end{prop}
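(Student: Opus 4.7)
For the forward direction ($\Rightarrow$), the plan is to use a F{\o}lner sequence to produce an approximately invariant mean and then extract an invariant one by a weak-$*$ compactness argument. Concretely, given a F{\o}lner sequence $\{F_n\}$ (provided by \cref{AmenDefn} applied to an exhausting sequence of finite sets $S_n \uparrow \Gamma$ with $\epsilon_n \downarrow 0$), I would define
\[
A_n(\varphi) \;=\; \frac{1}{\#F_n}\sum_{g\in F_n}\varphi(g).
\]
Each $A_n$ is manifestly a mean (linear, positive, sends $1$ to $1$), so it lies in the unit ball of $\ell^\infty(\Gamma)^*$. By Banach--Alaoglu, that unit ball is weak-$*$ compact, so $\{A_n\}$ has a weak-$*$ cluster point $A$; being a weak-$*$ limit of means, $A$ is itself a mean. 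For left-invariance, observe that for any $g\in\Gamma$,
\[
|A_n(\varphi^g)-A_n(\varphi)|
\;\le\; \frac{\#(F_n \triangle g^{-1}F_n)}{\#F_n}\,\|\varphi\|_\infty,
\]
and the F{\o}lner property forces the right-hand side to tend to $0$ as $n\to\infty$. Passing to the cluster point gives $A(\varphi^g)=A(\varphi)$ for every $g$ and every $\varphi$.

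For the reverse direction ($\Leftarrow$), the idea is the standard Day--Namioka argument, which I would break into two steps. Let $P(\Gamma)\subset \ell^1(\Gamma)$ denote the set of finitely supported probability measures; via $\mu\mapsto (\varphi\mapsto \sum \mu(g)\varphi(g))$, $P(\Gamma)$ sits inside the space of means, and in fact every mean is a weak-$*$ limit of elements of $P(\Gamma)$ (this is just the density of the convex hull of point evaluations in the weak-$*$ topology, a Hahn--Banach / bipolar computation). Fixing a finite $S\subseteq\Gamma$ and $\epsilon>0$, consider the linear map
\[
T\colon \ell^1(\Gamma)\to \bigoplus_{s\in S}\ell^1(\Gamma),\qquad
\mu\mapsto (s\mu-\mu)_{s\in S},
\]
and look at the convex set $T(P(\Gamma))$. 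The hypothesis that $A$ is a left-invariant mean forces $0$ to lie in the \emph{weak} closure of $T(P(\Gamma))$, so by Mazur's theorem (convex sets have the same weak and norm closures) there exists $\mu\in P(\Gamma)$ with $\|s\mu-\mu\|_1<\epsilon/\#S$ for all $s\in S$.

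The final step is to convert this $\ell^1$-almost-invariant measure into an almost-invariant finite set by the level-set averaging trick. For $t\ge 0$, let $F_t=\{g\in\Gamma:\mu(g)>t\}$; these are finite sets, and the coarea-style identities
\[
\|\mu\|_1=\int_0^\infty \#F_t\,dt,\qquad
\|s\mu-\mu\|_1=\int_0^\infty \#(F_t\triangle sF_t)\,dt
\]
show that
\[
\sum_{s\in S}\|s\mu-\mu\|_1
=\int_0^\infty \sum_{s\in S}\#(F_t\triangle sF_t)\,dt
<\epsilon\int_0^\infty \#F_t\,dt,
\]
so for some $t$ we must have $\sum_{s\in S}\#(F_t\triangle sF_t)<\epsilon\,\#F_t$; this $F_t$ is then an $(S,\epsilon)$-invariant F{\o}lner set, establishing amenability.

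\textbf{Main obstacle.} The ($\Rightarrow$) direction is routine functional analysis. The delicate step is the passage from an abstract left-invariant mean to \emph{concrete} F{\o}lner sets in ($\Leftarrow$); the cleanest way I know is the Day trick above, whose crucial input is Mazur's theorem converting weak approximation of $0$ by $T(P(\Gamma))$ into norm approximation, followed by the level-set identity that transfers $\ell^1$-almost-invariance to set-theoretic almost-invariance.
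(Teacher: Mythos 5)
Your proof is correct and follows essentially the same route as the paper: for ($\Rightarrow$) you average over F{\o}lner sets and extract a limit functional by weak-$*$ compactness (the paper offers ultrafilters, Hahn--Banach, or Zorn for the same step), and for ($\Leftarrow$) you approximate the mean by finitely supported probability measures and pass to level sets, which is exactly the paper's sketched argument for its converse proposition. Your version of the converse is actually more complete than the paper's ``idea of proof'': you correctly isolate Mazur's theorem as the step that upgrades weak-$*$ approximation of the invariant mean to \emph{norm}-almost-invariance in $\ell^1$, a point the paper glosses over when it says one can ``choose $n$ large so that $f_n$ is close to being invariant.''
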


\begin{proof}[Proof ($\Rightarrow$)]
Choose a sequence~$\{F_n\}_{n=1}^\infty$ of almost-invariant sets with $\epsilon \to 0$ as $n \to \infty$, and let
	$$A_n(\varphi) =  \frac{1}{\#F_n}\sum_{x \in F_n} \varphi(x) .$$
That is, $A_n(\varphi)$ is the average value of~$\varphi$ on the finite set~$F_n$, so $A_n$ is obviously a mean on $\ell^\infty(\Gamma)$.

Since the set $F_n$ is almost invariant, the mean~$A_n$ is close to being left-invariant. To obtain perfect left-invariance, we take a limit:
	$$ A(\varphi) = \lim_{k \to \infty} A_{n_k}(\varphi) ,$$
where $\{n_k\}$ is a subsequence chosen so that the limit exists.

However, if we choose different subsequences for different functions~$\varphi$, then the limit may not be linear or left-invariant --- we need to be consistent in our choice of $A(\varphi)$  for all~$\varphi$. 
This can be accomplished in various ways:
	\begin{itemize}
	\item (logician's approach) An \emph{ultrafilter} on~$\natural$ tells us which subsequences are ``good'' and which are ``bad\zz.'' So the choice of an ultrafilter easily leads to a consistent value  for $A(\varphi)$.
	\item (analyst's approach) Define a linear functional~$A_0$ that 
		\begin{itemize}
		\item takes the value~$1$ on the constant function~$1$, 
		and
		\item is $0$ on every function of the form $\varphi^g - \varphi$.
		\end{itemize}
		Then the \emph{Hahn-Banach Theorem} tells us that $A_0$ extends to a linear functional defined on all of~$\ell^\infty(\Gamma)$. 
	\item (other viewpoints) Use \emph{Zorn's Lemma}, \emph{Tychonoff's Theorem}, or some other version of the \emph{Axiom of Choice}.
	\qedhere
	\end{itemize}
\end{proof} 

The following consequence is very important in the theory of group actions:

\begin{cor} \label{ActHasInvtMeas}
Suppose
\noprelistbreak
	\begin{itemize}
	\item $\Gamma$ is amenable, 
	and
	\item $\Gamma$ acts on a compact metric space~$X$  {\upshape(}by homeomorphisms\/{\upshape)}.
	\end{itemize}
Then there exists a $\Gamma$-invariant Borel probability measure~$\mu$ on~$X$.
\end{cor}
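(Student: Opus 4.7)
The plan is to transport the left-invariant mean on $\ell^\infty(\Gamma)$ (provided by \cref{AmenAvgVal}) to a positive linear functional on $C(X)$, and then apply the Riesz Representation Theorem to obtain the desired measure.

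\textbf{Setup.} Fix a base point $x_0 \in X$. For each continuous function $f \in C(X)$, define an orbit function $\varphi_f \in \ell^\infty(\Gamma)$ by
\[
\varphi_f(g) = f(g \cdot x_0).
\]
This is a bounded function on $\Gamma$ because $X$ is compact, so $f$ is bounded. Let $A$ be a left-invariant mean on $\ell^\infty(\Gamma)$, and define
\[
\Lambda(f) = A(\varphi_f) \quad \text{for } f \in C(X).
\]
First I would verify that $\Lambda$ is a positive linear functional on $C(X)$ with $\Lambda(1) = 1$: linearity of $\Lambda$ follows from linearity of $A$ together with the obvious identity $\varphi_{\alpha f + \beta g} = \alpha \varphi_f + \beta \varphi_g$; positivity follows because $f \ge 0$ implies $\varphi_f \ge 0$, so $A(\varphi_f) \ge 0$; normalization $\Lambda(1) = 1$ holds since $\varphi_1$ is the constant function $1$ on $\Gamma$.

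\textbf{Producing the measure.} Since $X$ is a compact metric space, the Riesz Representation Theorem yields a unique regular Borel probability measure~$\mu$ on~$X$ such that
\[
\Lambda(f) = \int_X f \, d\mu \qquad \text{for all } f \in C(X).
\]

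\textbf{Invariance.} For $h \in \Gamma$ and $f \in C(X)$, let $f_h(x) = f(h \cdot x)$. Then
\[
\varphi_{f_h}(g) = f(h g \cdot x_0) = \varphi_f(hg) = \varphi_f^{\,h}(g),
\]
so $\varphi_{f_h} = \varphi_f^{\,h}$. By the left-invariance of the mean~$A$,
\[
\Lambda(f_h) = A(\varphi_f^{\,h}) = A(\varphi_f) = \Lambda(f),
\]
which means $\int_X f(h \cdot x) \, d\mu(x) = \int_X f(x) \, d\mu(x)$ for every $f \in C(X)$. By uniqueness in the Riesz Representation Theorem (applied to the pushforward measure $h_*\mu$, which represents the same functional), we conclude $h_*\mu = \mu$ for every $h \in \Gamma$, so $\mu$ is $\Gamma$-invariant.

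\textbf{Main obstacle.} There are no serious obstacles: the only subtlety is keeping the direction of the group action straight so that left-invariance of $A$ translates into genuine $\Gamma$-invariance (rather than some twisted version) of~$\mu$. The calculation $\varphi_{f_h} = \varphi_f^{\,h}$ above is exactly the compatibility needed. The deep content has already been absorbed into \cref{AmenAvgVal} (which itself rests on the Hahn-Banach Theorem or some other form of the Axiom of Choice) and the Riesz Representation Theorem.
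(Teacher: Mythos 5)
Your proposal is correct and follows essentially the same route as the paper's proof: restrict $f$ to the orbit of a basepoint, apply the left-invariant mean from \cref{AmenAvgVal} to get a positive normalized linear functional on $C(X)$, and invoke the Riesz Representation Theorem. The only difference is that you spell out the invariance computation $\varphi_{f_h}=\varphi_f^{\,h}$ and the appeal to uniqueness in Riesz, which the paper leaves implicit.
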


\begin{proof}
Fix a basepoint $x_0 \in X$. For any $f \in C(X)$, we can define a function $\varphi_f \colon \Gamma \to \complex$ by restricting~$f$ to the $\Gamma$-orbit of~$x_0$. More precisely,
	$$ \varphi_f(g) = \varphi( gx_0) .$$
Since $f$ is continuous and~$X$ is compact, we know that $f$ is bounded. So $\varphi_f$ is also bounded. Therefore, \cref{AmenAvgVal} tells us that it has an average value $A(\varphi_f)$, which we call $\mu(f)$.

Since $A$ is a mean, it is easy to see that $\mu$ is a positive linear functional of finite norm (in fact, $\|\mu\| = 1$). So the Riesz Representation Theorem tells us that $\mu$ is a Borel measure on~$X$. Since $A$ is translation-invariant and $A(1) = 1$, we see that $\mu$ is translation-invariant and $\mu(X) = 1$, so $\mu$ is a translation-invariant probability measure.
\end{proof}

\begin{rem}
The converse is true: if every $\Gamma$-action on every compact metric space has a $\Gamma$-invariant probability measure, then $\Gamma$ is amenable. So this is another possible choice for the definition of amenability.
\end{rem}

\Cref{BddCohoLect} will discuss the ``bounded cohomology group'' $\Hb^n(\Gamma;V)$, which is defined exactly like the usual group cohomology, except that  all cochains are required to be  bounded functions. This notion provides another definition of amenability:

\begin{thm}[B.\,E.\,Johnson \cite{Johnson-CohoBanach}] 
$\Gamma$ is amenable if and only if $\Hb^n(\Gamma;V) =  0$ for every\/ $\Gamma$-module~$V$ that is the dual of a Banach space.
\end{thm}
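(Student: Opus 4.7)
The plan is the usual two-directional argument: for $(\Rightarrow)$ carry out an averaging construction on cochains against an invariant mean, and for $(\Leftarrow)$ extract an invariant mean by applying $\Hb^1$-vanishing to one carefully chosen dual module.

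For $(\Rightarrow)$, suppose $\Gamma$ is amenable and let $m$ be a left-invariant mean on $\ell^\infty(\Gamma)$, as provided by \cref{AmenAvgVal}. Given a bounded $n$-cocycle $c \colon \Gamma^n \to V$, I would define an $(n-1)$-cochain $b \colon \Gamma^{n-1} \to V$ by averaging in the last argument:
\[
b(g_1, \dots, g_{n-1}) \ = \ \int_\Gamma c(g_1, \dots, g_{n-1}, h) \, dm(h).
\]
The integral is interpreted in the weak-$*$ sense: writing $V = W^*$ for some Banach space~$W$, for every $w \in W$ the scalar function $h \mapsto \langle c(g_1, \dots, g_{n-1}, h), w \rangle$ lies in $\ell^\infty(\Gamma)$, and applying~$m$ to it defines the pairing $\langle b(g_1, \dots, g_{n-1}), w \rangle$. (This is exactly where the dual-module hypothesis is needed: the $\Gamma$-action on~$V$ must be weak-$*$ continuous for things to match up.) A direct calculation using the cocycle identity for $c$ together with the left-invariance of~$m$ shows that the coboundary of~$b$ equals $\pm c$, so $[c] = 0$ in $\Hb^n(\Gamma; V)$.

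For $(\Leftarrow)$, I would apply the vanishing hypothesis to one specific module to produce an invariant mean. Let $\mathcal{M} \subset \ell^\infty(\Gamma)^*$ denote the nonempty, weak-$*$ compact, convex set of means, which $\Gamma$ permutes by translation. Fix any $m_0 \in \mathcal{M}$ (say, evaluation at the identity), and let $V$ be the closed linear span of $\{\, g m_0 - m_0 : g \in \Gamma \,\}$ inside $\ell^\infty(\Gamma)^*$; as a weak-$*$ closed subspace of a dual Banach space it is itself a dual Banach $\Gamma$-module. The map $c \colon \Gamma \to V$ defined by $c(g) = g m_0 - m_0$ is a bounded $1$-cocycle, so by hypothesis $c(g) = g \mu - \mu$ for some $\mu \in V$. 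Then $m := m_0 - \mu$ is $\Gamma$-invariant, and one checks that $\mu$ lies in the weak-$*$ closed affine span of $\{\, g m_0 : g \in \Gamma \,\} \subset \mathcal{M}$, which is a set on which $f \mapsto f(1)$ is constantly~$1$ and on which positivity is preserved by convex averages; hence $m$ is an invariant mean.

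The principal obstacle will be the last sentence of $(\Leftarrow)$: certifying that the~$\mu$ extracted from $\Hb^1$-vanishing can be chosen so that $m_0 - \mu$ is not merely an invariant functional but in fact a probability mean. This is essentially Day's fixed-point theorem for amenable actions on weak-$*$ compact convex sets, and the cleanest execution is to set up the module~$V$ so that its affine structure already encodes ``mean minus mean,'' which forces the translate of~$m_0$ by~$\mu$ to land back in~$\mathcal{M}$. The $(\Rightarrow)$ direction carries only the minor technical point of making weak-$*$ integration against a finitely additive probability rigorous, and this is precisely the reason the theorem is restricted to dual $\Gamma$-modules rather than arbitrary Banach modules.
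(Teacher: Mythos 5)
Your $(\Rightarrow)$ direction is exactly the paper's argument (the paper only sketches it, deferring the details to \cref{Hbb(amen;R)=0}): average the cocycle in its last variable against a left-invariant mean from \cref{AmenAvgVal}, interpreting the integral weak-$*$ against the predual, and use left-invariance of the mean plus weak-$*$ continuity of the module action to see that the coboundary of the average recovers $\pm c$. That part is correct and complete. The paper does not prove the converse at all, so your $(\Leftarrow)$ goes beyond it; it is in the spirit of Johnson's original argument, but as written it has a genuine gap, which you yourself flag.

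The gap is the positivity of the invariant functional. From $c(g)=g\mu-\mu$ with $\mu\in V$ you correctly get that $m:=m_0-\mu$ is $\Gamma$-invariant and lies in $m_0+V$, i.e.\ in the weak-$*$ closed \emph{affine} span of $\{gm_0\}$ (note it is $m$, not $\mu$, that lies there). But the affine span of the means is not contained in the set $\mathcal M$ of means: affine combinations allow negative coefficients, so membership in $m_0+V$ only gives $m(1)=1$, not $m\ge 0$. No choice of the module $V$ fixes this, because $\Hb^1$-vanishing only ever hands you an element of a linear subspace. The standard repair is a separate lattice argument: $\ell^\infty(\Gamma)^*$ is the space of bounded finitely additive measures, the $\Gamma$-action is by lattice isometries, so the Jordan decomposition $m=m^+-m^-$ has both parts invariant; since $m^+(1)-m^-(1)=1$ we have $m^+(1)>0$, and $m^+/m^+(1)$ is the desired invariant mean. (Equivalently, $|m|/\|m\|$ works.) Two smaller points: the module is more cleanly taken to be $\bigl(\ell^\infty(\Gamma)/\real\bigr)^*=\{\phi:\phi(1)=0\}$, which visibly contains all $gm_0-m_0$ and is visibly a dual module, whereas your ``closed linear span'' must explicitly be the weak-$*$ closed span for the dual-space claim to hold; and invoking Day's fixed-point theorem would be circular here unless you derive it from the $\Hb^1$ hypothesis rather than from amenability.
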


\begin{proof}[Proof  of $(\Rightarrow)$]  Recall that if $\Gamma$ is a finite group, and $V$~is a $\Gamma$-module (such that multiplication by the scalar $|\Gamma|$ is invertible), then one can prove $H^n(\Gamma;V) =  0$  by averaging: for an $n$-cocycle $\alpha \colon \Gamma^n \to V$, define
 	$$\overline{\alpha}(g_1,\ldots,g_{n-1})  = \frac{1}{|\Gamma|} \sum_{g \in \Gamma}  \alpha(g_1,\ldots,g_{n-1}, g) .$$
Then $\overline{\alpha}$ is an $(n-1)$-cochain, and $\delta \overline{\alpha} = \pm\alpha$. So $\alpha$ is a coboundary, and is therefore trivial in cohomology.

Since $\Gamma$ is  amenable,  we can do exactly this kind of averaging  for any  \emph{bounded} cocycle. See \cref{Hbb(amen;R)=0} for more details.
\end{proof}

When $\Gamma$ is amenable, \cref{AmenAvgVal} allows us to take the average value of the characteristic function of any subset of~$\Gamma$. This leads to von\,Neumann's original definition of amenability \cite{vonNeumann-AllgemeineMasses}:

\begin{cor} \label{VonNeumann}
$\Gamma$ is amenable if and only if there exists a finitely additive, translation-invariant probability measure that is defined on all of the subsets of\/~$\Gamma$.
\end{cor}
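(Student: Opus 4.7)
\medskip
\noindent\textbf{Proof proposal.} The plan is to deduce this from \cref{AmenAvgVal}, which already gives amenability $\iff$ existence of a left-invariant mean on $\ell^\infty(\Gamma)$. So the task reduces to showing that left-invariant means on $\ell^\infty(\Gamma)$ correspond bijectively to finitely additive, translation-invariant probability measures on the full power set $2^\Gamma$. Both directions of the correspondence use the obvious recipe: a mean~$A$ gives a set-measure by $\mu(E) = A(\chi_E)$, and a set-measure~$\mu$ gives a mean by ``integrating'' bounded functions against~$\mu$.

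For the direction ($\Rightarrow$), suppose $A$ is a left-invariant mean on $\ell^\infty(\Gamma)$. For $E \subseteq \Gamma$, the characteristic function $\chi_E$ lies in $\ell^\infty(\Gamma)$, so set $\mu(E) = A(\chi_E)$. Three properties need checking. First, $\mu(\Gamma) = A(1) = 1$ and $\mu(E) \ge 0$ because $\chi_E \ge 0$, so $\mu$ is a probability measure. Second, if $E$ and~$F$ are disjoint, then $\chi_{E \cup F} = \chi_E + \chi_F$, and linearity of~$A$ gives finite additivity. Third, for $g \in \Gamma$, one computes $\chi_{gE}(x) = \chi_E(g^{-1} x) = (\chi_E)^{g^{-1}}(x)$, so left-invariance of~$A$ yields $\mu(gE) = \mu(E)$.

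For the direction ($\Leftarrow$), suppose $\mu$ is a finitely additive, translation-invariant probability measure on $2^\Gamma$. I need to build a left-invariant mean $A$ on $\ell^\infty(\Gamma)$ by integrating against~$\mu$. For a real-valued $\varphi \in \ell^\infty(\Gamma)$ with $|\varphi| \le M$, pick a partition $-M = t_0 < t_1 < \cdots < t_N = M$, set $E_i = \varphi^{-1}\bigl( [t_{i-1}, t_i) \bigr)$, and form the Riemann-like sum $\sum_{i=1}^N t_i \, \mu(E_i)$. A standard refinement argument (identical to the one for Riemann sums, using only finite additivity) shows that these sums converge as $\max_i (t_i - t_{i-1}) \to 0$; call the limit $A(\varphi)$, and extend to complex-valued $\varphi$ by decomposing into real and imaginary parts. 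Linearity, positivity, and $A(c) = c$ follow directly; left-invariance of~$A$ follows from translation-invariance of~$\mu$ because $(\varphi^g)^{-1}\bigl( [t_{i-1}, t_i) \bigr) = g^{-1} \varphi^{-1}\bigl( [t_{i-1}, t_i) \bigr)$, so the corresponding Riemann sums for $\varphi$ and~$\varphi^g$ are identical. Then \cref{AmenAvgVal} produces amenability.

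The main obstacle is the integration construction in the harder direction: one must verify that the Riemann-type sums are Cauchy as the partition is refined, using nothing but finite additivity (there is no dominated convergence theorem available). The key estimate is that two sums corresponding to a common refinement differ by at most $(\max_i |t_i - t_{i-1}|) \cdot \mu(\Gamma)$, which is the completely standard argument and presents no serious difficulty. Everything else is a routine verification of the axioms.
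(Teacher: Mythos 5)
Your proposal is correct and follows the same route the paper intends: reduce to \cref{AmenAvgVal} and pass between means and finitely additive measures via $\mu(E)=A(\chi_E)$ in one direction and Riemann-type integration against~$\mu$ in the other. The paper only sketches the easy direction (mean $\to$ measure on characteristic functions); your converse via simple-function approximation is the standard completion and is carried out correctly.
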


More precisely, if we let $\power{\Gamma}$ be the collection of all subsets of~$\Gamma$, then the conclusion means there is a function $\mu \colon \power{\Gamma} \to [0,1]$, such that:
	\begin{itemize}
	\item $\mu(X_1 \cup X_2) = \mu(X_1) + \mu(X_2)$ if $X_1$ and~$X_2$ are disjoint,
	\item $\mu(\Gamma) = 1$,
	and
	\item $\mu(gX) = \mu(X)$ for all $g \in \Gamma$ and $X \subseteq \Gamma$.
	\end{itemize}

This definition was motivated by von\,Neumann's interest in the famous \emph{Banach-Tarski paradox}. The subjects are connected via the following notion:

\begin{defn}
A \emph{paradoxical decomposition} of~$\Gamma$ is a representation
	$$ \Gamma = \left( \coprod_{i=1}^m A_i \right) \coprod \left( \coprod_{j=1}^n B_j \right) \qquad \text{(disjoint unions)} ,$$
such that, for some $g_1,\ldots,g_m,h_1,\ldots,h_n \in \Gamma$, we have
	$$\Gamma = \bigcup_{i=1}^m g_i A_i =  \bigcup_{j=1}^n h_j B_j .$$
\end{defn}

\begin{exers} \label{VonNeumannEx} \ 
\noprelistbreak
\begin{enumerate}
\item \label{VonNeumannEx-noParadox}
Show that if $\Gamma$ is amenable, then $\Gamma$ does not have a paradoxical decomposition.

\item \label{VonNeumannEx-free}
Find an explicit paradoxical decomposition of a free group.

\item \label{VonNeumannEx-Paradox}
Show that if $\Gamma$ is not amenable, then $\Gamma$ has a paradoxical decomposition.
\\ \hint{There exists a Ponzi scheme.} 

\end{enumerate}
\end{exers}

\begin{rems} \ 
\noprelistbreak
	\begin{enumerate}
	\item von\,Neumann used the German word ``messbar'' (which can be translated as ``measurable''), not the currently accepted term ``amenable\zz,'' and his condition was not proved to be equivalent to \cref{AmenDefn} until much later (by F\o lner \cite{Folner-GrpsBanachMean}).
	\item See \cite{Wagon-BanachTarski} for much more about the Banach-Tarski paradox, paradoxical decompositions, and the relevance of amenability.
	\end{enumerate}
\end{rems}

We have now seen several proofs that the existence of almost-invariant sets implies some other notion that is equivalent to amenability. Here a proof that goes the other way.

\begin{prop}
If there is an invariant mean~$A$ on $\ell^\infty(\Gamma)$, then $\Gamma$ has almost-invariant finite sets.
\end{prop}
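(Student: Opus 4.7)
The plan is to convert the hypothesis --- a single, possibly non-constructive invariant mean --- into almost-invariant finite sets through two standard intermediate steps. First I would establish \emph{Reiter's condition}: for every finite $S \subseteq \Gamma$ and every $\epsilon > 0$, there is a finitely supported probability measure $\mu \in \ell^1(\Gamma)$ with $\|g \cdot \mu - \mu\|_1 < \epsilon$ for all $g \in S$, where $(g \cdot \mu)(x) = \mu(g^{-1}x)$. Then I would feed such a $\mu$ into a level-set (coarea-style) argument to extract the desired almost-invariant finite set.

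For the first step, the key input is the standard fact that finitely supported probability measures, viewed inside $\ell^\infty(\Gamma)^{*}$ via the pairing $\mu \mapsto (\varphi \mapsto \sum_x \mu(x)\varphi(x))$, are weak-$*$ dense in the space of all means (density follows from a finite-dimensional Hahn-Banach separation, using that positive functionals of norm one with $A(1)=1$ are pointwise bounded by $\sup\varphi$). Fix $S$ and $\epsilon$, and consider the convex subset
\[
C = \bigl\{\,(\mu - g \cdot \mu)_{g \in S} \in \ell^1(\Gamma)^S \,\bigm|\, \mu \geq 0,\ \|\mu\|_1 = 1,\ \operatorname{supp}\mu \text{ finite}\,\bigr\}.
\]
I aim to show $0 \in \overline{C}$ in the norm topology, which by Mazur's theorem agrees on a convex set with the weak closure. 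If not, Hahn-Banach separates $0$ from $C$ by an $S$-tuple $(\varphi_g)_{g \in S} \in \ell^\infty(\Gamma)^S$ (the dual of $\ell^1(\Gamma)^S$). Rewriting $\langle g \cdot \mu,\varphi_g\rangle = \langle \mu,\varphi_g^g\rangle$ collects everything into a single function $\psi = \sum_{g \in S}(\varphi_g - \varphi_g^g) \in \ell^\infty(\Gamma)$ with $\langle \mu,\psi\rangle \geq c > 0$ for every finitely supported probability measure $\mu$. Weak-$*$ density extends this inequality to the invariant mean: $A(\psi) \geq c$; but left-invariance forces $A(\varphi_g - \varphi_g^g) = 0$ for each $g$, hence $A(\psi) = 0$, a contradiction.

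For the second step, given $\mu$ as above, set $F_t = \{x \in \Gamma : \mu(x) > t\}$ for $t > 0$; each $F_t$ is finite. From the identity $\mu(x) = \int_0^\infty \mathbf{1}_{F_t}(x)\,dt$ one gets
\[
\int_0^\infty \#F_t \,dt \;=\; \|\mu\|_1 \;=\; 1
\qquad\text{and}\qquad
\int_0^\infty \#(gF_t \triangle F_t)\,dt \;=\; \|g \cdot \mu - \mu\|_1,
\]
the second via $|\mu(g^{-1}x) - \mu(x)| = \int_0^\infty |\mathbf{1}_{F_t}(g^{-1}x) - \mathbf{1}_{F_t}(x)|\,dt$. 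Summing over $g \in S$ and comparing integrands by a Markov-style averaging argument produces some $t_0 > 0$ for which $\#(gF_{t_0} \triangle F_{t_0}) \leq \delta \cdot \#F_{t_0}$ simultaneously for all $g \in S$, with $\delta$ as small as we like provided we started with $\epsilon$ small enough. Since $\#(gF_{t_0} \triangle F_{t_0}) = 2\#F_{t_0} - 2\#(F_{t_0} \cap gF_{t_0})$, the set $F := F_{t_0}$ meets the almost-invariance condition of \cref{AmenDefn}.

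The main obstacle is the first step: the invariant mean $A$ lives in the huge dual $\ell^\infty(\Gamma)^{*}$, and there is no direct way to ``integrate against $A$'' to produce a finitely supported object. The Hahn-Banach / weak-$*$ density maneuver is precisely what finitizes the mean; once that is in hand, the passage from $\ell^1$ densities to finite almost-invariant sets via super-level sets is essentially formal.
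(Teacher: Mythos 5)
Your argument is correct and follows the same route as the paper's (deliberately sketchy) ``idea of proof'': approximate the mean $A$ by finitely supported $\ell^1$ densities and then pass to super-level sets. In fact you supply the one genuinely nontrivial point that the sketch glosses over --- weak-$*$ convergence to $A$ only gives \emph{weak} approximate invariance, and the Mazur/Hahn--Banach convexity step is needed to upgrade this to the $\ell^1$-norm statement (Reiter's condition) before the layer-cake argument can be applied.
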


\begin{proof}[Idea of proof]
The dual of~$\ell^1(\Gamma)$ is~$\ell^\infty(\Gamma)$, so $\ell^1(\Gamma)$ is dense in the dual of $\ell^\infty(\Gamma)$, in an appropriate weak topology. Hence, there is a sequence $\{f_n\} \subset \ell^1(\Gamma)$, such that $f_n \to A$. Since $A$~is invariant, we may choose some large~$n$ so that $f_n$ is close to being invariant. Then, for an appropriate $c> 0$, the finite set $\{\, x \mid |f(x)| > c \,\}$ is almost invariant.
\end{proof}

\section{Examples of amenable groups}

Much of the following exercise can be proved fairly directly by using almost-invariant sets, but it will be much easier to use other characterizations of amenability for some of the parts.

\begin{exers} \label{EgAmenEx}
Show that all groups of the following types are amenable:
	\begin{enumerate}

	\item \label{EgAmenEx-finite}
	finite groups

	\item \label{EgAmenEx-cyclic}
	cyclic groups

	\item \label{EgAmenEx-product}
	$\text{amenable} \times \text{amenable}$
	\hintit{I.e., if $\Gamma_1$ and$\Gamma_2$ are amenable, then $\Gamma_1 \times \Gamma_1$ is amenable.}

	\item \label{EgAmenEx-abelian}
	abelian groups

	\item \label{EgAmenEx-extension}
	amenable by amenable
	\qquad \hintit{I.e., if there is a normal subgroup~$N$ of~$\Gamma$, such that $N$ and $\Gamma/N$ are amenable, then  $\Gamma$ is amenable.}

	\item \label{EgAmenEx-solv}
	solvable groups

	\item \label{EgAmenEx-subgrp}
	subgroups of amenable groups

	\item \label{EgAmenEx-quotient}
	quotients of amenable groups

	\item \label{EgAmenEx-locally}
	locally amenable groups
	\qquad \hintit{I.e., if every \emph{finitely generated} subgroup of~$\Gamma$ is amenable, then  $\Gamma$ is amenable.}

	\item \label{EgAmenEx-limit}
	direct limits of amenable groups
	\qquad \hintit{I.e., if $\mathcal{A}$ is a collection of amenable groups that is totally ordered under inclusion, then $\bigcup \mathcal{A}$ is amenable.}

	\item \label{EgAmenEx-subexp}
	groups of subexponential growth
	\qquad \hintit{I.e., if there is a finite generating set~$S$ of~$\Gamma$, such that $\lim_{n \to \infty} (\# S^n)/e^{\epsilon n} = 0$ for every $\epsilon > 0$, then $\Gamma$ is amenable.}

	\end{enumerate}
\end{exers}

\begin{rems} \label{AmenClasses} \ 
\noprelistbreak
\begin{enumerate}

\item \label{AmenClasses-elem}
From \cref{EgAmenEx}, we see that any group obtained from finite groups and abelian groups by repeatedly taking extensions, subgroups, quotients, and direct limits must be amenable. These ``obvious'' examples of amenable groups are said to be \emph{elementary amenable}. 

\item \label{AmenClasses-Grigorchuk}
The so-called ``Grigorchuk group'' is an example of a group with subexponential growth that is not elementary amenable \cite{GrigorchukPak-Intermediate}.

\item \label{AmenClasses-Basilica}
A group is said to be \emph{subexponentially amenable} if it can be constructed from groups of subexponential growth by repeated application of extensions, subgroups, quotients, and direct limits. The ``Basilica group'' is an example of an amenable group that is not subexponentially amenable \cite{BartholdiVirag-AmenRandomWalks}.

\end{enumerate}
Thus, the following obvious inclusions are proper:
$$ \begin{matrix}
		\hfill \{\text{finite}\} \subsetneq \\[2pt]
		\{\text{abelian}\} \subsetneq\{\text{solvable}\} \subsetneq 
		\end{matrix}
	\left\{ \begin{matrix} \text{elementary} \\ \text{amenable} \end{matrix} \right\}
	\subsetneq \left\{ \begin{matrix} \text{subexponentially} \\ \text{amenable} \end{matrix} \right\}
	\subsetneq \left\{ \begin{matrix} \text{amenable} \end{matrix} \right\}
	. $$
\end{rems}
	
\begin{rem}
By combining \fullcref{FolnerEx}{Free} with \fullcref{EgAmenEx}{subgrp}, we see that if $\Gamma$ has a nonabelian free subgroup, then $\Gamma$ is not amenable. The converse is often called the ``von\,Neumann Conjecture\zz,'' but it was shown to be false in 1980 when Ol'shanskii proved that the ``Tarski monster'' is not amenable. This is a group in which every element has finite order, so it certainly does not contain free subgroups \cite{OlshanskiiSapir-TorsionByCycic}.
N.\,Monod \cite{Monod-PiecewiseProj} has recently constructed counterexamples that are much less complicated.
\end{rem}

\begin{warn}
In the theory of topological groups, it is \emph{not} true that every subgroup of an amenable group is amenable --- only the \emph{closed} subgroups need to be amenable. In particular, many amenable topological groups contain nonabelian free subgroups (but such subgroups cannot be closed).
\end{warn}

\section{Applications to actions on the circle}

We are discussing amenability in these lectures because it plays a key role in the proofs of Ghys \cite{GhysCercle} and Burger-Monod \cite{BurgerMonod-BddCohoLatts} that large arithmetic groups must always have a finite orbit when they act on the circle (cf.\ \cref{GhysFP}). Here is a much simpler example of the connection between amenability and finite orbits:

\begin{prop} \label{AmenOnCircle}
Suppose
	\begin{itemize}
	\item $\Gamma$ is amenable,
	and
	\item $\Gamma$ acts on $S^1$ {\upshape(}by orientation-preserving homeomorphisms\/{\upshape)}.
	\end{itemize}
Then either
	\begin{enumerate}
	\item the abelianization of\/~$\Gamma$ is infinite,
	or
	\item \label{AmenOnCircle-FinOrb}
	the action has a finite orbit.
	\end{enumerate}
\end{prop}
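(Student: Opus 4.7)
I would obtain a $\Gamma$-invariant Borel probability measure~$\mu$ on~$S^1$ from \cref{ActHasInvtMeas} (which applies because $\Gamma$ is amenable and $S^1$ is a compact metric space), and then split into cases according to whether $\mu$ has atoms. If it does, the atoms of maximal mass form a nonempty, $\Gamma$-invariant subset of~$S^1$ that is necessarily finite (since $\mu$ has total mass~$1$), so its $\Gamma$-orbits are finite and conclusion~(2) holds immediately.

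The interesting case is that $\mu$ is atomless. Here I would invoke the classical rotation-number construction: fix a basepoint $p_0 \in S^1$ and set $F(x) = \mu\bigl( [p_0, x] \bigr) \bmod 1$, where $[p_0, x]$ is the positively oriented arc. Atomlessness makes $F \colon S^1 \to \real/\integer$ continuous and weakly monotone, and $\Gamma$-invariance of~$\mu$ forces $F(gx) - F(x)$ to be independent of~$x$ for each fixed~$g$. Calling this common value $\rho(g)$ yields a homomorphism $\rho \colon \Gamma \to \real/\integer$ satisfying $F \circ g = R_{\rho(g)} \circ F$, where $R_t$ is rotation by~$t$. If $\rho(\Gamma)$ is infinite, then since its target is abelian $\rho$ factors through $\Gamma^{\mathrm{ab}}$, forcing the abelianization to be infinite --- conclusion~(1).

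The only remaining (and only delicate) case is $\mu$ atomless with $\rho(\Gamma)$ finite. Then $\Gamma_0 := \ker\rho$ has finite index in~$\Gamma$, and every $g \in \Gamma_0$ preserves each fiber of~$F$. I would next verify the structural fact that the fibers of~$F$ are exactly the closed arcs $\overline{I}$ corresponding to the complementary components $I$ of $K := \supp \mu$, together with the singletons $\{p\}$ for the remaining $p \in K$: any coincidence $F(x) = F(y)$ with $x, y \in K$ forces one of the two arcs joining them to have $\mu$-measure zero, hence to lie in a complementary component of~$K$. An orientation-preserving homeomorphism cannot swap the endpoints of a proper invariant arc, so each $g \in \Gamma_0$ fixes both endpoints of every complementary arc as well as every singleton fiber; hence $\Gamma_0$ fixes $K$ pointwise. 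Since a nonzero atomless probability measure has uncountable support, $K$ is infinite; the $\Gamma$-action on~$K$ then factors through the finite quotient $\Gamma / \Gamma_0$, and every $\Gamma$-orbit in~$K$ is finite --- conclusion~(2) once more. I expect the fiber analysis in this last case to be the main technical obstacle; every other step is either a direct appeal to \cref{ActHasInvtMeas} or a standard rotation-number calculation.
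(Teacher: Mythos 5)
Your proof is correct and takes essentially the same route as the paper's: obtain an invariant measure from \cref{ActHasInvtMeas}, dispose of the atomic case by finiteness of the set of atoms of a given mass, and in the atomless case semiconjugate the action to a group of rotations, whose image is either infinite (forcing infinite abelianization) or finite (forcing finite orbits). The only substantive difference is that your fiber analysis carefully handles the case $\supp\mu \neq S^1$, which the paper's proof elides with ``Assume, for simplicity, that the support of~$\mu$ is all of~$S^1$''; your treatment of that case is sound.
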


\begin{proof}
From \cref{ActHasInvtMeas}, we know there is a $\Gamma$-invariant probability measure~$\mu$ on~$S^1$.

\setcounter{case}{0}

\begin{case}
Assume $\mu$ has an atom.
\end{case}
This assumption means there exists some point $p \in S^1$ that has positive measure: $\mu \bigl( \{p\} \bigr) > 0$. Since $\mu$ is $\Gamma$-invariant, every point in the orbit of~$p$ must have the same measure. However, since $\mu$ is a probability measure, we know that the sum of the measures of these points is finite. Therefore the orbit of~$p$ must be finite (since the sum of infinitely many copies of the same positive number is infinite).

\begin{case}
Assume $\mu$ has no atoms.
\end{case}
Assume, for simplicity, that the support of~$\mu$ is all of~$S^1$. (That is, no nonempty open interval has measure~$0$.) Then the assumption of this case implies that, after a continuous change of coordinates, the measure~$\mu$ is simply the Lebesgue measure on~$S^1$. Since $\Gamma$ preserves this measure (and is orientation-preserving), this implies that $\Gamma$ acts on the circle by rotations. Since the group of rotations is abelian, we conclude that the abelianization of~$\Gamma$ is infinite. (Or else the image of~$\Gamma$ in the rotation group is finite, which means that every orbit is finite.)
\end{proof}

\begin{rem}
If we assume that $\Gamma$ is infinite and finitely generated, then the conclusion of the \lcnamecref{AmenOnCircle} can be strengthened: it can be shown that the abelianization of~$\Gamma$ is infinite \cite{Morris-AmenOnLine}, so there is no need for alternative~\pref{AmenOnCircle-FinOrb}.
\end{rem}

Large arithmetic groups always have finite abelianization, so it might seem that the theorem of Ghys and Burger-Monod could be obtained directly from \cref{AmenOnCircle}. Unfortunately, that is not possible, because arithmetic groups are not amenable (since they contain free subgroups). Instead, Ghys's proof is based on the following more sophisticated observations:

\begin{prop} \label{AmenOnConvex}
Suppose
	\begin{itemize}
	\item $\Gamma$ is amenable, 
	\item $\Gamma$ acts by  {\upshape(}continuous{\upshape)} linear maps on a locally convex vector space~$V$,
	and 
	\item $C$ is a nonempty, compact, convex, $\Gamma$-invariant subset of~$V$.
	\end{itemize}
Then $\Gamma$ has a fixed point  in~$C$. 
\end{prop}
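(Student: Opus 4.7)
The plan is to mimic the classical Markov--Kakutani fixed point argument, replacing the averaging over finite groups (or $\integer$-averages) by the almost-invariant averaging provided by F\o lner sets. Pick any basepoint $c_0 \in C$, and let $\{F_n\}$ be a F\o lner sequence for~$\Gamma$ (which exists by amenability, together with the fact that $\Gamma$ is countable, so a single sequence works for all finite $S$ and $\epsilon > 0$). Define
	$$ c_n = \frac{1}{\#F_n} \sum_{g \in F_n} g \cdot c_0 .$$
Each $c_n$ is a convex combination of points in~$C$ (the points $g c_0$ lie in $C$ by $\Gamma$-invariance, and then convexity places the average inside $C$). Since $C$ is compact, the net $\{c_n\}$ has a subnet converging to some $c_\infty \in C$.

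The remaining task is to check that $c_\infty$ is fixed by every $h \in \Gamma$. For this, I would estimate
	$$ h c_n - c_n = \frac{1}{\#F_n}\left( \sum_{g \in hF_n} g \cdot c_0 \ - \ \sum_{g \in F_n} g \cdot c_0 \right) ,$$
where the terms indexed by $hF_n \cap F_n$ cancel. So $h c_n - c_n$ is a sum of at most $\#(hF_n \triangle F_n)$ vectors of the form $\pm g c_0$, scaled by $1/\#F_n$. For any continuous seminorm~$p$ on $V$, the seminorm $p$ is bounded on the compact set~$C$ by some $M_p < \infty$, so
	$$ p(h c_n - c_n) \ \le \ \frac{\#(hF_n \triangle F_n)}{\#F_n} \cdot 2 M_p \longrightarrow 0 $$
by the F\o lner property applied to $S = \{h\}$. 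Since this holds for every continuous seminorm, $h c_n - c_n \to 0$ in $V$. Passing to the convergent subnet and using that the action of~$h$ is continuous and linear gives $h c_\infty = c_\infty$, as required.

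The main subtlety I expect is the topological one: because $V$ is only locally convex (not normed), I cannot argue with a single norm, and $C$ need not be metrizable, so I must work with nets and with the full family of continuous seminorms defining the topology. The core combinatorial content, however, is entirely carried by the F\o lner estimate $\#(hF_n \triangle F_n)/\#F_n \to 0$, and everything else is a matter of packaging this with boundedness of continuous seminorms on the compact set~$C$.
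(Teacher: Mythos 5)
Your proof is correct, but it takes a different route from the paper. The paper deduces the result in two lines from \cref{ActHasInvtMeas}: amenability gives a $\Gamma$-invariant probability measure $\mu$ on the compact set~$C$, and the fixed point is the barycenter of~$\mu$, which lies in~$C$ by convexity. You instead give a self-contained ``Day/Markov--Kakutani'' argument: average the orbit of a basepoint over F\o lner sets, extract a convergent subnet, and kill the displacement $h c_n - c_n$ with the estimate $\#(hF_n \triangle F_n)/\#F_n \to 0$ tested against every continuous seminorm. Your version is arguably more careful on the functional-analytic side: the paper's appeal to \cref{ActHasInvtMeas} is stated only for compact \emph{metric} spaces (and $C$ need not be metrizable), and the existence of the barycenter of a measure on a compact convex subset of a general locally convex space is itself a nontrivial fact that the paper leaves implicit; your argument bypasses both issues, at the cost of redoing the averaging by hand rather than quoting the invariant-measure corollary. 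Two minor points to tidy: the bound on $p(hc_n - c_n)$ is really $\bigl( \#(hF_n \triangle F_n)/\#F_n \bigr) \cdot M_p$ (your factor of~$2$ is harmless but unnecessary), and to conclude $hc_\infty = c_\infty$ from $hc_\infty - c_\infty = 0$ you are implicitly assuming $V$ is Hausdorff, which is the standard convention and is also tacit in the paper.
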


\begin{proof}
$\Gamma$ acts on the compact set~$C$ by homeomorphisms, so \cref{ActHasInvtMeas} provides a $\Gamma$-invariant probability measure~$\mu$ on~$C$. Let $p$ be the center of mass of~$\mu$. Then $p$ is fixed by~$\Gamma$, since $\mu$ is $\Gamma$-invariant.  Also, since $C$ is convex, we know $p \in C$.
\end{proof}

\begin{rem}
\Cref{AmenOnConvex} has a converse: if $\Gamma$ has a fixed point in every nonempty, compact, convex, $\Gamma$-invariant set, then $\Gamma$ is amenable. So this fixed-point property provides yet another possible definition of amenability.
\end{rem}

\begin{cor}[{}{Furstenberg}] \label{FurstenbergLemma}
Suppose 
	\begin{itemize}
	\item $\Gamma$ is an arithmetic subgroup of\/ $\SL(3,\real)  = G$, 
	\item $\Gamma$ acts on~$S^1$ {\upshape(}by homeomorphisms{\upshape)},
	\item $P = \text{\smaller[2] $\begin{bmatrix} * & * & * \\ & * & * \\ & & * \end{bmatrix}$} \subset G$,
	and
	\item $\Prob(S^1) = \{ \text{probability measures on~$S^1$} \}$, with the natural weak topology.
	\end{itemize}
Then there exists a\/ $\Gamma$-equivariant measurable function $\overline\psi  \colon G/P  \to  \Prob(S^1)$.
\end{cor}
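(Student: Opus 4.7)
The plan is to produce $\overline\psi$ as (the descent of) a $P$-fixed point in a compact convex space of $\Gamma$-equivariant measurable maps from $G$ to $\Prob(S^1)$, and then apply \cref{AmenOnConvex}. The crucial preliminary observation is that $P$, as the group of upper-triangular matrices in $\SL(3,\real)$, is solvable (an extension of the abelian diagonal group by the nilpotent upper-unitriangular group), and is therefore amenable by \cref{EgAmenEx}.

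Let $K$ be the set of (a.e.\ classes of) weak-$*$ measurable maps $\psi \colon G \to \Prob(S^1)$ satisfying $\psi(\gamma g) = \gamma_*\psi(g)$ for every $\gamma \in \Gamma$ and almost every $g \in G$, where $\gamma_*$ denotes push-forward of measures under the homeomorphism of $S^1$ induced by $\gamma$. View $\Prob(S^1)$ as a compact convex subset of the closed unit ball of $C(S^1)^*$, and view $K$ as a subset of the dual Banach space $L^\infty\bigl(G; C(S^1)^*\bigr) \cong L^1\bigl(G; C(S^1)\bigr)^*$ with its weak-$*$ topology. Inside the weak-$*$ compact unit ball (by Banach-Alaoglu), the condition of taking values in $\Prob(S^1)$ and each $\Gamma$-equivariance condition carve out a weak-$*$ closed, convex subset, so $K$ is compact and convex. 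It is also nonempty: choose a measurable fundamental domain $D$ for $\Gamma$ in $G$ (available since $\Gamma$ is a lattice), fix $x_0 \in S^1$, set $\psi \equiv \delta_{x_0}$ on $D$, and extend by $\Gamma$-equivariance.

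Next let $P$ act on $K$ by right translation, $(p\cdot \psi)(g) = \psi(gp)$. This action preserves $\Gamma$-equivariance (because left and right translation on $G$ commute), and is weak-$*$ continuous because right translation by a fixed $p$ is a bounded linear operator on the predual $L^1\bigl(G; C(S^1)\bigr)$. Applying \cref{AmenOnConvex} to the amenable group $P$ acting on the compact convex set $K$ inside the locally convex space $L^\infty\bigl(G; C(S^1)^*\bigr)$ yields a fixed point $\psi \in K$. Being simultaneously $\Gamma$-equivariant on the left and $P$-invariant on the right, $\psi$ descends to the desired measurable $\Gamma$-equivariant map $\overline\psi \colon G/P \to \Prob(S^1)$.

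The main obstacle is the functional-analytic setup: one has to choose a single topology on $K$ so that compactness, local convexity, and continuity of the $P$-action all hold simultaneously. This is what forces the use of the weak-$*$ topology and invokes separability of $C(S^1)$ to ensure $L^\infty\bigl(G; C(S^1)^*\bigr)$ really is a well-behaved dual Banach space; some further care is needed to reconcile the pointwise definition of $\Gamma$-equivariance with the a.e.\ equivalence that defines elements of $K$. Once those measure-theoretic details are settled, the rest is a direct invocation of amenability via \cref{AmenOnConvex}.
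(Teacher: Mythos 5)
Your proposal is correct and follows essentially the same route as the paper: realize the $\Gamma$-equivariant measurable maps $G \to \Prob(S^1)$ as a nonempty, weak-$*$ compact, convex set on which $P$ acts continuously by right translation, invoke amenability of the solvable group $P$ via \cref{AmenOnConvex} to get a fixed point, and descend to $G/P$. You actually supply more detail than the paper's sketch (nonemptiness via a fundamental domain, the identification with the dual of $L^1\bigl(G; C(S^1)\bigr)$); the only point worth flagging is that $P$ is a non-discrete topological group, so one needs the topological-group version of \cref{AmenOnConvex}, as the paper itself notes.
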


\begin{proof}
Let
	$$ \mathcal{C} = \bigl\{\, \text{measurable $\Gamma$-equivariant}\ \psi \colon G \to \Prob(S^1) \,\bigr\} $$
(where functions that differ only on a set of measure~$0$ are identified).
It is easy to see that $\mathcal{C}$ is convex. Then, since the Banach-Alaoglu Theorem tells us that weak$^*$-closed, convex, bounded sets are compact, we see that $\mathcal{C}$ is compact in an appropriate weak topology. Also, $P$ acts continuously on~$\mathcal{C}$, via $\psi^p(g) = \psi(gp)$.

We know that solvable groups are amenable \fullcsee{EgAmenEx}{solv}. Although we have only been considering discrete groups, the same is true for topological groups in general. So $P$ is amenable (because it is solvable). Therefore (a generalization of) \cref{AmenOnConvex} tells us that $P$ has a fixed point. This means there is a $\Gamma$-equivariant map $\psi \colon G \to \Prob(S_1)$, such that $\psi(gp) = \psi(g)$ (a.e.). Ignoring a minor issue about sets of measure~$0$, this implies that $\psi$ factors through to a well-defined $\Gamma$-equivariant function $\overline\psi \colon G/P \to \Prob(S_1)$.
\end{proof} 

We omit the proof of the main step in Ghys's argument:

\begin{thm}[Ghys \cite{GhysCercle}] \label{GhysConstant}
The function~$\overline\psi$ provided by \cref{FurstenbergLemma} is constant {\upshape(}a.e.{\upshape)}.
\end{thm}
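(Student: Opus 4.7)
\emph{Proof plan.} The strategy is to use ergodic properties of the $\Gamma$-action on the flag variety $G/P$ to rigidify $\overline\psi$, forcing it to factor through the constant map.

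First I would decompose the target. Every $\nu \in \Prob(S^1)$ has a canonical decomposition $\nu = \nu_{\text{at}} + \nu_{\text{cont}}$ into atomic and continuous parts, and this decomposition is $\Homeo_+(S^1)$-equivariant. After renormalizing each piece, $\overline\psi$ thus produces two $\Gamma$-equivariant measurable maps $G/P \to \Prob(S^1)$, so it suffices to show each one is essentially constant.

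For the atomic case, fix $\lambda > 0$ and set $E_\lambda(x) = \{\, p \in S^1 \mid \overline\psi(x)(\{p\}) \ge \lambda\,\}$, a finite subset of $S^1$ of size at most $1/\lambda$. The map $x \mapsto |E_\lambda(x)|$ is a bounded, $\Gamma$-invariant, measurable function on $G/P$. Since $\Gamma$ is a lattice in $G$, Moore's ergodicity theorem applies and gives that $\Gamma$ acts ergodically on $G/P$, so $|E_\lambda(\cdot)|$ is constant a.e. Then $x \mapsto E_\lambda(x)$ is a $\Gamma$-equivariant measurable map into the (Borel) space of finite subsets of $S^1$ of a fixed cardinality. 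A Margulis-type rigidity argument exploiting ergodicity of the $\Gamma$-action on $G/P \times G/P$ (again by Moore) forces this map to be essentially constant, yielding a $\Gamma$-invariant finite subset of $S^1$, and hence an essentially constant $\overline\psi$.

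For the continuous case, each arc $I \subset S^1$ yields a bounded measurable function $f_I(x) = \overline\psi(x)(I)$ on $G/P$, and the family $\{f_I\}_I$ determines $\overline\psi$. Here I would exploit that $G$ contains unipotent subgroups~$U^-$ (the opposite of the unipotent radical of $P$) such that $U^-$ acts ergodically on $G/P$ and such that $U^-$ together with $P$ generate $G$. A Mautner-type argument, combined with the spectral-gap properties of the $G$-action on $L^2(G/P)$ and the $\Gamma$-equivariance, should force $\overline\psi$ to be invariant under the right action of $U^-$ as well as $P$, making it right-$G$-invariant and therefore constant.

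The main obstacle will be the continuous case: concretely promoting ergodicity of subgroups of~$G$ acting on $G/P$ into genuine right-invariance of the $\Prob(S^1)$-valued map $\overline\psi$. This is the technically deep part of Ghys's argument, and I expect it ultimately requires either the Howe--Moore vanishing of matrix coefficients for the $G$-representation on a suitable Banach-space-valued $L^2(G/P; \text{Meas}(S^1))$, or an analogous rigidity statement tailored to the nonlinear target $\Prob(S^1)$.
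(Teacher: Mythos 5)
First, a point of order: the paper does not prove this theorem --- it explicitly says ``We omit the proof of the main step in Ghys's argument'' and refers to \cite{GhysCercle} (see also \cite{WitteZimmer-ActOnCircle} and \cite{BaderFurmanShaker}). So there is no in-text proof to compare yours against; what follows measures your outline against the actual published arguments. Your opening reduction is fine: the atomic/continuous decomposition of measures is $\Homeo_+(S^1)$-equivariant, the total atomic mass is a $\Gamma$-invariant measurable function on $G/P$ and hence essentially constant by Moore ergodicity, and one may treat the two pieces separately. But both of the remaining steps --- which is to say, the entire content of the theorem --- are asserted rather than proved, and the mechanisms you name for them do not work as stated.

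For the continuous part, the proposed engine is in the wrong place. The space $G/P$ carries no $G$-invariant measure (only a quasi-invariant measure class), so there is no unitary $G$-representation on $L^2(G/P)$ to which Howe--Moore or a spectral gap applies; the Moore/Mautner machinery lives on $\Gamma\backslash G$ with Haar measure. Even there, the crucial obstruction is that $\psi\colon G\to\Prob(S^1)$ is left-$\Gamma$-\emph{equivariant} for a nontrivial action on the target, not $\Gamma$-invariant. If it were invariant, Moore ergodicity of the right $P$-action on $\Gamma\backslash G$ would give constancy in one line; the whole point of Ghys's theorem is to establish a ``cocycle'' or ``equivariant'' strengthening of ergodicity, and nothing in your plan engages with that. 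For the atomic part, ergodicity of $\Gamma$ on $G/P\times G/P$ only tells you that off-diagonal invariants such as $\#\bigl(E_\lambda(x)\cap E_\lambda(y)\bigr)$ or $\sum_p \overline\psi(x)(\{p\})\,\overline\psi(y)(\{p\})$ are a.e.\ equal to some constant $c$; the value $c=0$ is perfectly consistent with a wildly nonconstant map, and the Cauchy--Schwarz comparison with the on-diagonal quantity $\sum_p \overline\psi(x)(\{p\})^2$ does not close because the diagonal of $G/P\times G/P$ is a null set. What actually forces constancy in Ghys's and Witte--Zimmer's proofs is genuine dynamics: contracting sequences in $\Gamma$ (coming from the Cartan decomposition, or from opposite unipotents) applied at Lebesgue density points of $\psi$, organized by an induction over the parabolic subgroups containing $P$ --- one shows $\psi$ is invariant along the fibers of each projection $G/P\to G/Q$ for the maximal parabolics $Q\supset P$, and these fibrations together separate no points in higher rank. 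That step, which uses higher rank in an essential way (the statement is false for lattices in $\PSL(2,\real)$), is absent from your proposal.
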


From this, it is easy to complete the proof:

\begin{cor}[Ghys \cite{GhysCercle}]
If\/ $\Gamma$ is any arithmetic subgroup of\/ $\SL(3,\real)$, then every action of\/~$\Gamma$ on the circle has a finite orbit.
\end{cor}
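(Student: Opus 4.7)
The plan is to combine \cref{FurstenbergLemma,GhysConstant} to produce a single $\Gamma$-invariant Borel probability measure on~$S^1$, and then to extract a finite orbit from it by the argument used in the proof of \cref{AmenOnCircle}. Note that amenability of~$\Gamma$ is not directly available (indeed $\Gamma$ contains nonabelian free subgroups), so \cref{ActHasInvtMeas} does not apply, and the point of \cref{GhysConstant} is precisely to ``forget'' the variation of $\overline\psi(gP)$ with~$gP$ and replace it by a single measure which is then automatically $\Gamma$-invariant.

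Given an action of $\Gamma$ on~$S^1$, \cref{FurstenbergLemma} supplies a $\Gamma$-equivariant measurable map $\overline\psi \colon G/P \to \Prob(S^1)$, and \cref{GhysConstant} then asserts that $\overline\psi(gP) = \mu$ for some fixed $\mu \in \Prob(S^1)$ and almost every $gP \in G/P$. For each $\gamma \in \Gamma$ and almost every~$gP$, equivariance gives $\gamma_* \mu = \gamma_* \overline\psi(gP) = \overline\psi(\gamma g P) = \mu$, so $\mu$ is $\Gamma$-invariant.

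Now I would argue case-by-case as in \cref{AmenOnCircle}, with one extra wrinkle. If $\mu$ has an atom~$p$, then every point of $\Gamma p$ carries the same positive mass, forcing $\Gamma p$ to be finite. Otherwise $\mu$ is atomless; if $\supp \mu = S^1$, a continuous change of coordinates turns $\mu$ into Lebesgue measure on~$S^1$, so $\Gamma$ acts on~$S^1$ by orientation-preserving isometries, i.e.\ by rotations, and since $\Gamma$ is an arithmetic subgroup of the higher-rank simple Lie group~$\SL(3,\real)$ it has finite abelianization (e.g.\ by Kazhdan's Property~$(T)$), so its image in the abelian group~$\SO(2)$ is finite and every orbit is finite. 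If $\supp \mu \subsetneq S^1$, the atomless probability measure~$\mu$ defines a continuous, monotone, degree-one, $\Gamma$-equivariant semiconjugacy $h \colon S^1 \to S^1$ (collapsing each maximal complementary arc of $\supp \mu$ to a point) that pushes~$\mu$ forward to a fully-supported atomless measure on the target circle; the previous case applied to the induced $\Gamma$-action there produces a finite orbit downstairs, and its preimage under~$h$ is a closed $\Gamma$-invariant subset of~$S^1$ with only finitely many connected components (each a point or a closed arc), the endpoints of which form a finite $\Gamma$-invariant subset of~$S^1$ and hence contain a finite orbit.

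The main obstacle, and the real content of the proof, is \cref{GhysConstant}: the rigidity that forces $\overline\psi$ to be constant is where the higher rank of~$G$ and the specific structure of the Furstenberg boundary~$G/P$ genuinely enter. Everything after that is essentially formal once one also knows the standard fact that arithmetic subgroups of~$\SL(3,\real)$ have finite abelianization.
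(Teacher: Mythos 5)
Your proposal is correct and follows essentially the same route as the paper: combine \cref{FurstenbergLemma} with \cref{GhysConstant} to get a single $\Gamma$-invariant $\mu \in \Prob(S^1)$, then run the dichotomy from the proof of \cref{AmenOnCircle} and rule out the infinite-abelianization alternative because arithmetic subgroups of $\SL(3,\real)$ have finite abelianization. You in fact supply more detail than the paper does, by handling the case $\supp\mu \subsetneq S^1$ (which the proof of \cref{AmenOnCircle} dismisses ``for simplicity'') via the standard semiconjugacy collapsing the complementary arcs of $\supp\mu$.
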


\begin{proof}
From \cref{GhysConstant}, we know there is a constant function $\overline\psi \colon G/P \to \Prob(S^1)$ that is $\Gamma$-equivariant (a.e.). 
	\begin{itemize}
	\item Since $\overline\psi$ is constant, its range is a single point~$\mu$ (a.e.). 
	\item Since $\overline\psi$ is $\Gamma$-equivariant, its range is a $\Gamma$-invariant set.
	\end{itemize}
So $\mu$ is $\Gamma$-invariant. Since $\mu \in \Prob(S^1)$, then the proof of \cref{AmenOnCircle} shows that either 
	\begin{enumerate}
	\item the abelianization of~$\Gamma$ is infinite,
	or
	\item the action has a finite orbit.
	\end{enumerate}
Since the abelianization of every arithmetic subgroup of $\SL(3,\real)$ is finite, we conclude that there is a finite orbit, as desired.
\end{proof}

\begin{rem}
See \cite{GhysCircleSurvey} for a nice exposition of Ghys's proof for the special case of lattices in $\SL(n,\real)$. (A slightly modified proof of the general case that reduces the amount of case-by-case analysis is in \cite{WitteZimmer-ActOnCircle}.) A quite different (and very interesting) version of the proof is in \cite{BaderFurmanShaker}.
\end{rem}


\lecture{Introduction to bounded cohomology} \label{BddCohoLect}

M.\,Burger and N.\,Monod \cite{BurgerMonod-BddCohoLatts,BurgerMonod-ContBddCoho} developed a sophisticated machinery to calculate bounded cohomology groups, and used it to prove that actions of arithmetic groups on the circle have finite orbits. (See \cite{Monod-ContBddCoho} for an exposition.) We will discuss only some elementary aspects of bounded cohomology, and describe how it is related to actions on the circle, without explaining the fundamental contributions of Burger-Monod. 
See \cite{Monod-Invitation} for a more comprehensive introduction to bounded cohomology and its applications. (Almost all of the information in this lecture can be found there.)
The widespread interest in this subject was inspired by a paper of Gromov \cite{Gromov-VolBddCoho}.

\section{Definition}

\begin{recall}
For a discrete group~$\Gamma$, the cohomology group $H^n(\Gamma; \real)$ is defined as follows.
	\begin{itemize}
	\item Any function $c \colon \Gamma^n \to \real$ is an \emph{$n$-cochain}, and the set of these cochains is denoted $C^n(\Gamma)$.
	\item A certain \emph{coboundary operator} $\delta_n \colon C^n(\Gamma) \to C^{n+1}(\Gamma)$ is defined. Here are the definitions for the smallest values of~$n$:
		\begin{align*}
		\delta_0 c \, (g_1) &= 0 && \text{for $c \in \real$}, \\
		\delta_1c \, (g_1,g_2) &= c(g_1 g_2) - c(g_1) - c(g_2) && \text{for $c \colon \Gamma \to \real$}.
		\end{align*}
\item Then
	$$H^n(\Gamma; \real) 
	= \frac{\ker \delta_n}{\mathop{\mathrm{Image}} \delta_{n-1}} 
	=  \frac{\text{$n$-cocycles}}{\text{$n$-coboundaries}} 
	=  \frac{Z^n(\Gamma)}{B^n(\Gamma)} .$$
	\end{itemize}
(Note that, for simplicity, we take the coefficients to be~$\real$, not a general $\Gamma$-module.)
\end{recall}

\begin{defn}
The \emph{bounded cohomology} group $\Hb^n(\Gamma;\real)$ is defined in exactly the same way as $H^n(\Gamma;\real)$, except that all cochains are required to be \emph{bounded} functions.
\end{defn}

\begin{eg}
$\Hb^0(\Gamma;\real)$ and $\Hb^1(\Gamma;\real)$ are very easy to compute:
\begin{itemize}
\item It is easy to check that 
	$$H^0(\Gamma;\real) = \{\, \text{$\Gamma$-invariants in $\real$} \,\} = \real = \{\, \text{the set of constants} \,\} .$$
\item The same calculation shows that $\Hb^0(\Gamma)$ is the set of \emph{bounded} constants. Then, since it is obvious that every constant is a bounded function, we have 
	$\Hb^0(\Gamma;\real) = \real$.
\item It is easy to check that $H^1(\Gamma;\real) = \{\, \text{homomorphisms $\Gamma \to \real$} \,\}$.
\item The same calculation shows that $\Hb^1(\Gamma;\real)$ is the set of \emph{bounded} homomorphisms into~$\real$. Since a homomorphism into~$\real$ can never be bounded (unless it is trivial), this means $\Hb^1(\Gamma;\real) = \{0\}$.
\end{itemize}
Thus, $\Hb^0(\Gamma;\real)$ and $\Hb^1(\Gamma;\real)$ give no information at all about~$\Gamma$. (One of them is always~$\real$, and the other is always~$\{0\}$.)
\end{eg}

So $\Hb^n(\Gamma;\real)$ is only interesting when $n \ge 2$. These groups are not easy to calculate:

\begin{eg} \label{BddCoho(F2)}
For the free group~$F_2$, we have:
	$$ \Hb^n(F_2;\real) = \begin{cases}
	\text{$\infty$-dimensional} & n = 2,3 \\
	\hfil \langle\text{\it open problem}\,\rangle & n > 3.
	\end{cases}
	$$
\end{eg}

\begin{open}
Find some countable group\/~$\Gamma$, such that you can calculate $\Hb^n(\Gamma;\real)$ for all~$n$ {\upshape(}and $\Hb^n(\Gamma;\real) \neq 0$ for some~$n${\upshape)}.
\end{open}

Bounded cohomology is easy to calculate for amenable groups:

\begin{prop}[B.\,E.\,Johnson \cite{Johnson-CohoBanach}] \label{Hbb(amen;R)=0}
If\/ $\Gamma$ is amenable, then $\Hb^n(\Gamma;\real) = 0$ for all~$n$.
\end{prop}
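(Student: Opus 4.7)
The plan is to imitate the averaging argument sketched earlier for finite groups, but with the invariant mean on $\ell^\infty(\Gamma)$ (provided by \cref{AmenAvgVal}) playing the role of the normalized sum $\frac{1}{|\Gamma|}\sum_{g\in\Gamma}$. The case $n=0$ gives $\Hb^0(\Gamma;\real)=\real$ (so the claim is really for $n\geq 1$), and for $n=1$ the conclusion is immediate because a bounded homomorphism $\Gamma\to\real$ is forced to be zero. So fix $n\geq 2$.

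Given a bounded $n$-cocycle $\alpha\colon \Gamma^n\to\real$, fix a left-invariant mean $A$ on $\ell^\infty(\Gamma)$ and define
\[
\overline{\alpha}(g_1,\ldots,g_{n-1}) \;=\; A_g\bigl[\,\alpha(g,g_1,\ldots,g_{n-1})\,\bigr].
\]
For each fixed $(g_1,\ldots,g_{n-1})$ the function $g\mapsto \alpha(g,g_1,\ldots,g_{n-1})$ lies in $\ell^\infty(\Gamma)$ with norm at most $\|\alpha\|_\infty$, so $\overline{\alpha}$ is a well-defined bounded $(n-1)$-cochain. The goal is then to show $\delta_{n-1}\overline{\alpha}=\pm\alpha$, so $\alpha$ is a bounded coboundary and therefore trivial in $\Hb^n(\Gamma;\real)$.

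To verify the coboundary relation, I would write the cocycle identity $\delta_n\alpha=0$ evaluated on $(g,g_1,\ldots,g_n)$, solve for the term $\alpha(g_1,\ldots,g_n)$ (which does not contain $g$), and apply the mean $A_g$ to both sides. On the right, one obtains a linear combination of terms of the form $A_g[\alpha(g g_1,g_2,\ldots,g_n)]$, $A_g[\alpha(g,g_1,\ldots,g_ig_{i+1},\ldots,g_n)]$, and $A_g[\alpha(g,g_1,\ldots,g_{n-1})]$. The first term collapses by left-invariance of $A$: applying $A$ to $g\mapsto \alpha(gg_1,g_2,\ldots,g_n)$ equals applying it to $g\mapsto \alpha(g,g_2,\ldots,g_n)$, which is $\overline{\alpha}(g_2,\ldots,g_n)$. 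All the other terms are by definition values of $\overline{\alpha}$. Collecting signs then matches exactly the formula for $\delta_{n-1}\overline{\alpha}(g_1,\ldots,g_n)$.

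The main obstacle is bookkeeping: getting the indexing and signs right so that the image of the cocycle identity under $A_g$ is literally the coboundary formula for $\overline{\alpha}$, with the correct sign. The use of left-invariance happens in exactly one place (absorbing the variable $g$ into $gg_1$), and linearity plus positivity of $A$ take care of everything else. Boundedness is preserved automatically because $\|\overline{\alpha}\|_\infty\leq\|\alpha\|_\infty$, so the coboundary witness lies in the bounded complex, which is what distinguishes this argument from the trivial fact that cohomology of amenable groups with $\real$-coefficients vanishes only in certain restricted settings --- here the averaging really \emph{needs} boundedness, since otherwise the mean is not defined on $g\mapsto\alpha(g,g_1,\ldots,g_{n-1})$.
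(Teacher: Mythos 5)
Your overall strategy is exactly the paper's: average the bounded cocycle over one of its variables using the invariant mean from \cref{AmenAvgVal}, and read the cocycle identity as saying that the averaged cochain is a bounded primitive. But the one step you single out as ``the use of left-invariance'' is precisely where the argument, as written, fails. Because you average over the \emph{first} slot, the term you need to collapse is $A_g\bigl[\alpha(gg_1,g_2,\ldots,g_n)\bigr]$. As a function of $g$, the expression $\alpha(gg_1,g_2,\ldots,g_n)$ is the \emph{right} translate by $g_1$ of $\varphi(g)=\alpha(g,g_2,\ldots,g_n)$, whereas the mean supplied by \cref{AmenAvgVal} is invariant only under left translates $\varphi^h(x)=\varphi(hx)$. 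A left-invariant mean is not automatically right-invariant, so the identity $A_g[\varphi(gg_1)]=A_g[\varphi(g)]$ is not available to you; without it the term $\overline{\alpha}(g_2,\ldots,g_n)$ never materializes and the coboundary formula does not close up.

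The gap is real but small, and there are two standard repairs. Either average over the \emph{last} variable instead, setting $\overline{\alpha}(g_1,\ldots,g_{n-1})=A_x\bigl[\alpha(g_1,\ldots,g_{n-1},x)\bigr]$; then the troublesome cocycle term is $\alpha(g_1,\ldots,g_{n-1},g_nx)$, which \emph{is} a left translate in~$x$ and is handled by the invariance you actually have. This is what the paper's proof does (written out for $n=2$: it sets $c_g(x)=c(g,x)$ and $\overline{c}(g)=A(c_g)$, and the term $c(g_1,g_2x)$ collapses to $\overline{c}(g_1)$ by left-invariance). Alternatively, keep your convention but first replace $A$ by the right-invariant mean $\varphi\mapsto A(\check\varphi)$, where $\check\varphi(g)=\varphi(g^{-1})$; such a mean exists exactly when a left-invariant one does. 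Everything else in your sketch---the treatment of $n=0,1$, the bound $\|\overline\alpha\|_\infty\le\|\alpha\|_\infty$, the use of linearity of $A$, and the sign bookkeeping---is correct and matches the paper.
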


\begin{proof}
From \cref{AmenAvgVal}, we know there is a left-invariant mean 
	$$A \colon \ell^\infty(\Gamma; \real) \to \real .$$

Any element of $\Hb^n(\Gamma;\real)$ is represented by a bounded function $c \colon \Gamma^n \to \real$, such that $\delta_n c = 0$.
To simplify the notation, let us assume $n = 2$. For each $g \in \Gamma$, we can define a bounded function $c_g \colon \Gamma \to \real$ by $c_g(x) = c(g,x)$. Then, by defining
	$ \overline c(g) = A(c_g) \in \real $,
we have $\overline c \colon \Gamma \to \real$.

Now, for $g_1,g_2,x \in \Gamma$, we have
	$$ 0
	= \delta_2 c \, (g_1,g_2, x)
	= c(g_1,g_2) - c(g_1, g_2 x) + c(g_1g_2, x) - c(g_2, x) .$$
Applying~$A$ to both sides (considered as functions of~$x$), and recalling that $A$ is left-invariant, we obtain
	$$ 0 = c(g_1,g_2) - \overline{c}(g_1) + \overline{c}(g_1g_2) - \overline{c}(g_2) ,$$
so $c = -\delta_1 \overline{c} \in \mathrm{Image}(\delta_1)$. Therefore $[c] = 0$ in $\Hb^2(\Gamma;\real)$.
\end{proof}

\begin{rems} \ 
\noprelistbreak
	\begin{enumerate}
	\item The bounded cohomology $\Hb^n(X;\real)$ of a topological space~$X$ is defined by stipulating that a cochain in $C^n(X)$ is \emph{bounded} if it is a bounded function on the space of singular $n$-simplices. 
	\item (Brooks \cite{Brooks-RemBddCoho}, Gromov \cite{Gromov-VolBddCoho}) $\Hb^n(X;\real) = \Hb^n \bigl( \pi_1(X) ; \real \bigr)$.
	\item Forgetting that the cochains are bounded yields a comparison homomorphism $\Hb^n(\Gamma;\real) \to H^n(\Gamma;\real)$. It is very interesting to find situations in which this map is an isomorphism.
	\item (Thurston)
	If $M$ is a closed manifold of negative curvature, then the comparison map $\Hb^n(M;\real) \to H^n(M;\real)$ is \emph{surjective} for $n\ge 2$. However, it can fail to be injective.
	\end{enumerate}
\end{rems}

\section{Application to actions on the circle}

\begin{defn}
If $\rho \colon \Gamma \to \Homeo^+(S^1)$ is a homomorphism, then, for each $g \in \Gamma$, covering-space theory tells us that $\rho(g)$ can be lifted to a homeomorphism~$\lift g$ of the universal cover, which is~$\real$. However, the lift depends on the choice of a basepoint, so it is not unique --- lifts can differ by an element of $\pi_1(S^1) = \integer$. Specifically, if $\widehat g$ is another lift of~$\rho(g)$, then
	$$\exists n \in \integer, \ \forall t \in \real, \ \widehat g(t) =  \lift g(t)  + n . $$
Therefore, for any $g,h \in \Gamma$, there exists $c(g,h) \in \integer$, such that
	$$ \forall t \in \real, \ \lift g \bigl( \lift h(t) \bigr) =  \lift{gh}(t)  + c(g,h) , $$
because $\lift g \lift h$ and $\lift{gh}$ are two lifts of~$gh$.

It is easy to verify that $c$ is a $2$-cocycle:
	$$ \text{$c(h,k)  - c(gh,k)  + c(g,hk)  - c(g,h)  = 0$ for $g,h,k \in \Gamma$} ,$$
and that choosing a different lift~$\lift g$ only changes~$c$ by a coboundary. Therefore, $c$ determines a well-defined cohomology class $\alpha\in H^2(\Gamma;\integer)$, which is called the \emph{Euler class} of~$\rho$. 
\end{defn}

\begin{exer} \label{EulerTrivial}
Show that the Euler class of~$\rho$ is trivial if and only if $\rho$ lifts to a homomorphism $\lift\rho \colon \Gamma \to \Homeo^+(\real)$.
 \end{exer}
 
 \begin{rem}
  The Euler class can also be defined more naturally, by noting that if we let $\widetilde H$ be the set consisting of all possible lifts of all elements of $\Homeo^+(S^1)$, then we have a short exact sequence
	 $$ \{e\} \to \integer \to \widetilde H \to \Homeo^+(S^1) \to \{e\} $$
with $\integer$ in the center of~$\widetilde H$. Any such central extension is determined by a well-defined cohomology class $\alpha_0 \in H^2 \bigl( \Homeo^+(S^1);\integer \bigr)$, and the Euler class is obtained using the homomorphism~$\rho$ to pull this class back to~$\Gamma$.
\end{rem}
 
 \begin{exer} \label{EulerIsBdd}
Choose a basepoint in~$\real$ (say, $0$), and assume the lift $\lift g$ is chosen with $0 \le \lift g(0) < 1$ for all $g \in \Gamma$. Show $c$ is bounded. 
\end{exer}

\begin{defn}
Although we only defined bounded cohomology with real coefficients, the same definition can be applied with~$\integer$ in place of~$\real$. Therefore, if we choose $c$ as in \cref{EulerIsBdd}, then it represents a bounded cohomology class $[c]  \in \Hb^2(\Gamma; \integer)$, which is called the \emph{bounded Euler class} of the action.
\end{defn}

\begin{rem}
It can be shown that the bounded Euler class is a well-defined invariant of the action (independent of the choice of basepoint, etc.).
\end{rem}

\begin{prop}[Ghys \cite{Ghys-GrpsEtBddCoho}] \label{BddEulerClassFP}
The bounded Euler class is trivial if and only if\/ $\Gamma$ has a fixed point in~$S^1$.
\end{prop}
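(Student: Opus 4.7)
The plan is to analyze how the Euler cocycle $c$ depends on the choice of lifts $\lift g$, and to use this to identify trivializations of $c$ in $\Hb^2(\Gamma;\integer)$ with systems of lifts that form an honest homomorphism $\Gamma \to \Homeo^+(\real)$. The key technical observation is that replacing $\lift g$ by a new lift $\hat g = \lift g + m(g)$ (for any $m\colon \Gamma \to \integer$) changes the cocycle via the formula
\[
\hat c(g,h) = c(g,h) - \delta_1 m(g,h),
\]
which one checks by a direct calculation using $\lift g \lift h = \lift{gh} + c(g,h)$ together with the fact that $\lift g$ commutes with the deck transformation $x \mapsto x+1$. Thus two cocycles coming from two systems of lifts represent the same class in $\Hb^2(\Gamma;\integer)$ precisely when the difference function $m$ is bounded.

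For the $(\Leftarrow)$ direction, I would suppose $\Gamma$ fixes a point $p \in S^1$ and choose any preimage $\tilde p \in \real$. For each $g$, let $\hat g$ denote the unique lift of $\rho(g)$ satisfying $\hat g(\tilde p) = \tilde p$. Since $\hat g \circ \hat h$ and $\hat{gh}$ both fix $\tilde p$ and lift $\rho(gh)$, they coincide, so the $\hat g$ form a genuine homomorphism and their cocycle $\hat c$ vanishes identically. Writing $\hat g = \lift g + m(g)$, the fixed-point condition forces $m(g) = \tilde p - \lift g(\tilde p)$. Because $\lift g(0) \in [0,1)$ and $\lift g$ commutes with integer translations, $\lift g(\tilde p) - \tilde p$ is uniformly bounded in~$g$. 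Hence $m$ is bounded, the displayed formula gives $c = \delta_1 m$, and therefore $[c] = 0$ in $\Hb^2(\Gamma;\integer)$.

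For the $(\Rightarrow)$ direction, assume $[c] = 0$, so $c = \delta_1 b$ for some bounded $b\colon \Gamma \to \integer$. Set $\hat g = \lift g + b(g)$; the displayed formula yields $\hat c \equiv 0$, so $g \mapsto \hat g$ is a homomorphism $\Gamma \to \Homeo^+(\real)$ lifting~$\rho$. Because $\hat g(0) = \lift g(0) + b(g)$ and both summands are bounded in~$g$, the orbit $\{\hat g(0) : g \in \Gamma\}$ is a bounded subset of~$\real$. Its supremum~$s$ is therefore finite, and because each $\hat g$ is an orientation-preserving homeomorphism permuting this orbit, $s$ is fixed by every~$\hat g$. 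Projecting $s$ to $S^1$ produces a point fixed by all of~$\Gamma$.

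The main obstacle is bookkeeping --- keeping the sign conventions of~$\delta_1$, the direction of the correction $m$, and the normalization $\lift g(0) \in [0,1)$ all consistent, and in each direction identifying the natural change-of-lift function as a \emph{bounded} integer-valued function. Once these routine checks are in place, each implication reduces to a single observation: a homomorphic system of lifts on the ``fixed point'' side, and a bounded $\Gamma$-orbit in~$\real$ on the ``trivial class'' side.
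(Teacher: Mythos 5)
Your proof is correct and follows essentially the same route as the paper's: a fixed point yields a homomorphic system of lifts differing from the normalized ones by a bounded integer-valued function (the paper shortcuts this by taking the fixed point to be the basepoint, so the normalized cocycle vanishes identically), and conversely a bounded primitive of~$c$ yields a lifted action on~$\real$ with a bounded orbit whose supremum projects to a fixed point in~$S^1$. The sign conventions and the boundedness checks in your write-up are all consistent, so there is nothing to fix.
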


\begin{proof}
($\Leftarrow$) We may assume the fixed point is the basepoint~$\overline{0} \in S^1$.
Then we may choose $\lift g$ with $\lift g(0) = 0$. So $c(g,h) = 0$ for all $g,h$.

($\Rightarrow$) We have $c(g,h) = \varphi(gh) - \varphi(g) - \varphi(h)$ for some bounded $\varphi \colon \Gamma \to \integer$.
Letting $\widehat g (t) = \lift g(t) + \varphi(g)$, we have 
	\begin{itemize}
	\item $\widehat g \, \widehat h  = \widehat{gh}$, so $\widehat \Gamma$ is a lift of~$\Gamma$ to $\Homeo^+(\real)$,
	and
	\item $|\widehat g(0)| \le |\lift g(0)| + |\varphi(g)| \le 1 + \|\varphi\|_\infty$.
	\end{itemize}
Hence, the $\widehat\Gamma$-orbit of~$0$ is a bounded set, so it has a supremum in~$\real$. 
This supremum is a fixed point of~$\widehat\Gamma$, so its image in~$S^1$ is a fixed point of~$\Gamma$.
\end{proof}

\begin{cor}
If $\Hb^2(\Gamma;\integer) = 0$, then every orientation-preserving action of~$\Gamma$ on~$S^1$ has a fixed point. \qed
\end{cor}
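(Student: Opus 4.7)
The plan is to derive the corollary as an immediate consequence of \cref{BddEulerClassFP}, with essentially no new work beyond unpacking definitions. Given any orientation-preserving action $\rho \colon \Gamma \to \Homeo^+(S^1)$, I would first invoke the construction preceding the corollary to produce its bounded Euler class $[c] \in \Hb^2(\Gamma;\integer)$, using \cref{EulerIsBdd} to ensure the cocycle $c$ representing the class is indeed bounded (by choosing lifts $\lift g$ of each $\rho(g)$ satisfying $0 \le \lift g(0) < 1$).

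Next, I would apply the hypothesis directly: since $\Hb^2(\Gamma;\integer) = 0$, every class in this group is trivial, and in particular $[c] = 0$. Then \cref{BddEulerClassFP} tells us that the triviality of the bounded Euler class is equivalent to the existence of a fixed point for $\Gamma$ in~$S^1$, which is exactly the desired conclusion.

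There is no real obstacle here; all the conceptual content has already been packaged into \cref{BddEulerClassFP}. The only minor subtlety worth flagging is that the bounded Euler class is genuinely a well-defined invariant of $\rho$ (independent of the choice of basepoint and lifts), which is asserted in the remark following \cref{EulerIsBdd}. Assuming that well-definedness, the entire argument collapses to a single two-line syllogism: \emph{bounded Euler class lives in a trivial group, hence is zero, hence the action fixes a point.}
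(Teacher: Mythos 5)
Your argument is exactly the intended one: the paper marks this corollary with \qed precisely because it is the immediate syllogism you describe, namely that the bounded Euler class lies in $\Hb^2(\Gamma;\integer)=0$, hence is trivial, hence \cref{BddEulerClassFP} yields a fixed point. The proposal is correct and matches the paper's (implicit) proof.
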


The following result is easier to apply, because it uses real coefficients for the cohomology, instead of integers:

\begin{cor} \label{H2RVanishFP}
If
	\begin{itemize}
	\item $\Hb^2(\Gamma;\real) = 0$,
	\item $H^1(\Gamma;\real) = 0$,
	and
	\item $\Gamma$ is finitely generated,
	\end{itemize}
then every orientation-preserving action of\/~$\Gamma$ on~$S^1$ has a finite orbit.
\end{cor}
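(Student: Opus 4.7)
The plan is to compare the two vanishing hypotheses via the long exact sequence in bounded cohomology associated with the coefficient sequence $0 \to \integer \to \real \to \real/\integer \to 0$, use the preceding fixed-point corollary (applied to a finite-index subgroup), and then pass from a fixed point to a finite orbit.

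First I would let $\rho \colon \Gamma \to \Homeo^+(S^1)$ be the given action, and let $e_b \in \Hb^2(\Gamma; \integer)$ be its bounded Euler class. The inclusion $\integer \hookrightarrow \real$ sends $e_b$ to its image in $\Hb^2(\Gamma;\real)$, which is zero by assumption. Exactness of
$$\Hb^1(\Gamma; \real/\integer) \xrightarrow{\ \partial\ } \Hb^2(\Gamma; \integer) \longrightarrow \Hb^2(\Gamma; \real)$$
then tells me that $e_b = \partial \beta$ for some $\beta \in \Hb^1(\Gamma; \real/\integer)$. Since the coefficient group $\real/\integer$ is itself bounded, every inhomogeneous $1$-cocycle is automatically bounded, so
$$\Hb^1(\Gamma; \real/\integer) \iso \mathrm{Hom}(\Gamma, \real/\integer),$$
and I may regard $\beta$ as an honest homomorphism $\beta \colon \Gamma \to \real/\integer$.

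Next I would use the remaining two hypotheses to show $\beta$ has finite image. The group $H^1(\Gamma;\real) = \mathrm{Hom}(\Gamma, \real)$ vanishes, so the abelianization $\Gamma^{\mathrm{ab}}$ has no nontrivial homomorphism to~$\real$; together with finite generation this forces $\Gamma^{\mathrm{ab}}$ to be a finite abelian group. Since $\beta$ factors through $\Gamma^{\mathrm{ab}}$, its kernel $N := \ker \beta$ has finite index in~$\Gamma$.

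Now I would restrict to $N$. By naturality of the connecting map in the long exact sequence (restriction commutes with $\partial$), and by naturality of the bounded Euler class under restriction,
$$\mathrm{res}_N(e_b) = \partial\bigl(\mathrm{res}_N(\beta)\bigr) = \partial(0) = 0 \quad \text{in } \Hb^2(N;\integer),$$
and $\mathrm{res}_N(e_b)$ is the bounded Euler class of the restricted action $\rho|_N$. The corollary immediately preceding the statement (the one that vanishing of $\Hb^2(\,\cdot\,;\integer)$ forces a fixed point) is not quite usable as stated because we are assuming vanishing only of the specific class $e_b$, but the proof of \cref{BddEulerClassFP} used only that $e_b$ itself is zero, so it applies and produces a point $p \in S^1$ fixed by~$N$. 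Finally, the $\Gamma$-orbit of $p$ has cardinality $[\Gamma:\Stab_\Gamma(p)] \le [\Gamma:N] < \infty$, which is the desired finite orbit.

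The main obstacle is bookkeeping rather than ideas: one has to know that the long exact sequence for bounded cohomology is well-defined with $\integer$ and $\real/\integer$ coefficients (which requires checking that the relevant cochain complexes form a short exact sequence, using that $\real/\integer$-valued cochains lift to bounded $\real$-valued ones up to an integer), and that the bounded Euler class is natural under restriction to a subgroup. Once these formal points are granted, the argument reduces to the finiteness of $\Gamma^{\mathrm{ab}}$ and a direct application of \cref{BddEulerClassFP} to the finite-index subgroup~$N$.
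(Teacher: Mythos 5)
Your argument is correct and follows essentially the same route as the paper: the long exact sequence for $0 \to \integer \to \real \to \real/\integer \to 0$ shows the bounded Euler class is the coboundary of a homomorphism $\Gamma \to \real/\integer$, which the hypotheses $H^1(\Gamma;\real)=0$ and finite generation force to vanish on a finite-index subgroup, and then \cref{BddEulerClassFP} applied to that subgroup gives a fixed point whose $\Gamma$-orbit is finite. Your extra care about naturality of the restriction and about the fact that \cref{BddEulerClassFP} only needs the vanishing of the specific class (not all of $\Hb^2$) is exactly the implicit content of the paper's phrase ``$\delta_1\alpha$ is trivial on~$\Gamma'$.''
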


\begin{proof}
The short exact sequence 
	$$ 0 \to \integer \to \real \to \torus \to 0$$
yields a long exact sequence of bounded cohomology:
	$$ \Hb^1(\Gamma;\torus) \to \Hb^2(\Gamma;\integer) \to \Hb^2(\Gamma;\real) .$$
By assumption, the group at the right end is~$0$, so the map on the left is surjective. Therefore, the bounded Euler class is the coboundary of some (bounded) $1$-cocycle $\alpha \colon \Gamma \to \torus$. I.e., $\alpha$ is a homomorphism to $\torus$. Since $H^1(\Gamma;\real) = 0$ (and $\Gamma$ is finitely generated), we know $\alpha$ is trivial on some finite-index subgroup~$\Gamma'$ of~$\Gamma$.

Then the bounded Euler class $\delta_1 \alpha$ is trivial on~$\Gamma'$, so \cref{BddEulerClassFP} tells us that $\Gamma'$ has a fixed point~$p$.
Since $\Gamma'$ has finite index, we see that the $\Gamma$-orbit of~$p$ is finite.
\end{proof}


\begin{thm}[Ghys \cite{GhysCercle}, Burger-Monod \cite{BurgerMonod-BddCohoLatts}]
If\/ $\Gamma$ is any arithmetic subgroup of\/ $\SL(n,\real)$, with $n \ge 3$, then every action of\/~$\Gamma$ on~$S^1$ has a finite orbit.
\end{thm}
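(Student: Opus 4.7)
The plan is to derive this directly from \Cref{H2RVanishFP}: its conclusion already delivers a finite $\Gamma$-orbit on~$S^1$, so it suffices to verify the three hypotheses
\begin{itemize}
\item $\Gamma$ is finitely generated,
\item $H^1(\Gamma;\real) = 0$,
\item $\Hb^2(\Gamma;\real) = 0$.
\end{itemize}

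The first two hypotheses are standard consequences of the theory of higher-rank arithmetic groups. Finite generation (indeed, finite presentation) of~$\Gamma$ follows from Borel--Serre reduction theory. The vanishing $H^1(\Gamma;\real) = 0$ follows from Kazhdan's Property~$(T)$, which $\SL(n,\real)$ enjoys for $n\ge 3$ and which is inherited by every lattice; equivalently, it is a consequence of Margulis's normal subgroup theorem, which forces the abelianization of $\Gamma$ to be finite. I would simply cite these results.

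The main obstacle is the third hypothesis, $\Hb^2(\Gamma;\real) = 0$, and my plan here follows Burger--Monod. First, extend the definition of bounded cohomology from discrete groups to locally compact groups~$G$ by using \emph{continuous} bounded cochains, producing groups $H^n_{\mathrm{cb}}(G;\real)$. Second, prove the induction isomorphism $H^n_{\mathrm{cb}}(G;\real) \iso \Hb^n(\Gamma;\real)$ for any lattice $\Gamma$ in~$G$; the key input is amenability of the minimal parabolic $P \le G$, which holds because $P$ is solvable \fullcsee{EgAmenEx}{solv}. Third, compute $H^2_{\mathrm{cb}}(G;\real)$ for $G = \SL(n,\real)$ with $n \ge 3$ using the cochain complex $L^\infty\bigl((G/P)^{\bullet+1};\real\bigr)^G$, which is a strong, relatively injective resolution of the trivial $G$-module~$\real$ precisely because the $G$-space $G/P$ is amenable in Zimmer's sense; double ergodicity of the diagonal $G$-action on $G/P \times G/P$ identifies $L^\infty\bigl((G/P)^2;\real\bigr)^G$ with the constants, and an analogous analysis in degree~$3$ shows that the coboundary is injective on the relevant invariants, yielding $H^2_{\mathrm{cb}}(G;\real) = 0$.

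The hard step is unquestionably the vanishing of $H^2_{\mathrm{cb}}(G;\real)$; every other ingredient is either immediate from the corollary or a citation to a standard fact about arithmetic lattices. (An alternative route, due to Ghys, sidesteps the bounded-cohomology computation by combining the Furstenberg-type \Cref{FurstenbergLemma} with the constancy statement \Cref{GhysConstant} to extract a $\Gamma$-invariant probability measure on~$S^1$ directly, and then repeating the proof of \Cref{AmenOnCircle} while using the finiteness of the abelianization of $\Gamma$ to rule out the rotation alternative.)
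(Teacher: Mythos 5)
Your overall architecture is the same as the paper's: reduce everything to \cref{H2RVanishFP}, observe that finite generation and $H^1(\Gamma;\real)=0$ are standard facts about higher-rank lattices (Property~$(T)$, finiteness of the abelianization), and isolate $\Hb^2(\Gamma;\real)=0$ as the real content. The paper's outline gets that vanishing differently: it quotes the Burger--Monod theorem that the comparison map $\Hb^2(\Gamma;\real)\to H^2(\Gamma;\real)$ is \emph{injective} and combines it with the known vanishing of ordinary $H^2(\Gamma;\real)$. You instead try to compute $H^2_{\mathrm{cb}}(G;\real)$ for $G=\SL(n,\real)$ and transfer the answer to~$\Gamma$. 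That last transfer is where your argument breaks.

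The claimed ``induction isomorphism $H^n_{\mathrm{cb}}(G;\real)\iso\Hb^n(\Gamma;\real)$ for any lattice $\Gamma$ in~$G$'' is false, and the paper itself contains the counterexample: $F_2$ is a lattice in $\PSL(2,\real)$, and \cref{BddCoho(F2)} records that $\Hb^2(F_2;\real)$ is infinite-dimensional, whereas $H^2_{\mathrm{cb}}\bigl(\PSL(2,\real);\real\bigr)$ is one-dimensional. What is true for a lattice is only half of what you need: restriction $H^n_{\mathrm{cb}}(G;\real)\to\Hb^n(\Gamma;\real)$ is injective (it has a left inverse given by the transfer, i.e.\ averaging over the finite-volume quotient $G/\Gamma$), but surjectivity --- the direction required to conclude that $\Hb^2(\Gamma;\real)$ is small because $H^2_{\mathrm{cb}}(G;\real)$ is --- fails in general and is not a formal consequence of the amenability of~$P$. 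The correct Eckmann--Shapiro statement identifies $\Hb^n(\Gamma;\real)$ with $H^n_{\mathrm{cb}}\bigl(G;L^\infty(G/\Gamma)\bigr)$, so the nontrivial coefficient module $L^\infty(G/\Gamma)$ cannot be discarded; the genuinely hard part of Burger--Monod is precisely the degree-$2$ analysis (double ergodicity of $G/P\times G/P$ \emph{with coefficients}, higher rank entering essentially) that controls this induced module. So your step two has absorbed the main theorem into a false lemma, while your step three (the computation of $H^2_{\mathrm{cb}}(G;\real)=0$ with trivial coefficients via the resolution $L^\infty\bigl((G/P)^{\bullet+1}\bigr)^G$) is fine but is not by itself sufficient. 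Your parenthetical alternative via \cref{FurstenbergLemma}, \cref{GhysConstant}, and the argument of \cref{AmenOnCircle} is the other route the paper describes (Ghys's), and that sketch is correct modulo the omitted proof of \cref{GhysConstant}.
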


\begin{proof}[Outline of Burger-Monod proof]
Burger and Monod showed (in a much more general setting) that the comparison map $\Hb^2(\Gamma;\real) \to H^2(\Gamma;\real)$ is injective. 
Since it is known that $H^2(\Gamma;\real) = 0$ (if $n$~is sufficiently large), we conclude that $\Hb^2(\Gamma;\real) = 0$.

The other hypotheses of \cref{H2RVanishFP} are well known to be true.
\end{proof}

\section{Computing $\Hb^2(\Gamma;\real)$}

To calculate $\Hb^2(\Gamma;\real)$, we would like to understand the kernel of the comparison map $\Hb^2(\Gamma;\real) \to H^2(\Gamma;\real)$. For this, we introduce some notation:

\begin{defn} \ 
\noprelistbreak
\begin{itemize}
\item A function $\alpha \colon \Gamma \to \real$ is a:
	\begin{itemize} 
	\item \emph{quasimorphism} if 
	$\alpha(gh) - \alpha(g) - \alpha(h)$ is bounded (as a function of $(g,h) \in \Gamma \times \Gamma$);
	\item \emph{near homomorphism} if it is within a bounded distance of a homomorphism.
	\end{itemize}
\item We use $\QM(\Gamma,\real)$ and $\NH(\Gamma,\real)$ to denote the space of quasimorphisms and the space of near homomorphisms, respectively.
\end{itemize}
Note that $\NH(\Gamma,\real) \subset \QM(\Gamma,\real)$.
\end{defn}

\begin{prop} \label{KernelComparison}
The kernel of the comparison map $\Hb^2(\Gamma;\real) \to H^2(\Gamma;\real)$ is 
	$$ \frac{\QM(\Gamma,\real)}{\NH(\Gamma,\real)} .$$
\end{prop}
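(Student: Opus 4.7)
The plan is to construct an explicit isomorphism from $\ker\bigl(\Hb^2(\Gamma;\real)\to H^2(\Gamma;\real)\bigr)$ to $\QM(\Gamma,\real)/\NH(\Gamma,\real)$ by reading off a primitive of a bounded cocycle. A bounded cohomology class $[c]$ lies in the kernel of the comparison map precisely when the underlying bounded $2$-cocycle $c$ is an (ordinary) coboundary, i.e.\ $c=\delta_1\alpha$ for some (not necessarily bounded) function $\alpha\colon\Gamma\to\real$. Since
$$\alpha(gh)-\alpha(g)-\alpha(h)=\delta_1\alpha\,(g,h)=c(g,h)$$
is bounded, $\alpha$ is by definition a quasimorphism. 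This is the assignment I would take to define the map $\Phi\colon\ker\to\QM(\Gamma,\real)/\NH(\Gamma,\real)$ that sends $[c]$ to $[\alpha]$.

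Next I would check that $\Phi$ is well defined. The primitive $\alpha$ of a given bounded cocycle $c$ is unique only up to a $1$-cocycle, i.e.\ up to a homomorphism $\Gamma\to\real$; but any homomorphism is at distance $0$ from itself, hence lies in $\NH(\Gamma,\real)$, so the class $[\alpha]\in\QM/\NH$ is unchanged. Similarly, replacing $c$ by a cohomologous bounded cocycle $c+\delta_1\gamma$ (with $\gamma$ bounded) replaces $\alpha$ by $\alpha+\gamma$, and $\gamma$ is a bounded function, hence a near homomorphism (at bounded distance from the trivial homomorphism). Thus $\Phi$ depends only on $[c]$.

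Finally I would verify that $\Phi$ is a bijection. For surjectivity, given any quasimorphism $\alpha$, set $c=\delta_1\alpha$; then $c$ is bounded by definition of $\QM$, lies in $\Zb^2$ since $\delta_2\delta_1=0$, and $[c]$ belongs to the kernel of the comparison map because $c=\delta_1\alpha$ shows it is an unbounded coboundary. Clearly $\Phi([c])=[\alpha]$. For injectivity, suppose $\Phi([c])=0$, so $\alpha\in\NH(\Gamma,\real)$; write $\alpha=\eta+\beta$ with $\eta\colon\Gamma\to\real$ a homomorphism and $\beta$ bounded. Since $\delta_1\eta=0$, we obtain $c=\delta_1\alpha=\delta_1\beta$, which exhibits $c$ as the coboundary of a \emph{bounded} $1$-cochain, so $[c]=0$ in $\Hb^2(\Gamma;\real)$.

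There is no real obstacle here; the argument is essentially unwinding the definitions. The only point that requires care is bookkeeping about which cochains are required to be bounded and which are not: both directions of the isomorphism hinge on the observation that $\delta_1\alpha$ is bounded if and only if $\alpha$ is a quasimorphism, and that two quasimorphisms with the same coboundary differ by a genuine homomorphism, so the quotient by $\NH$ (rather than merely by homomorphisms) is exactly what absorbs the freedom to change $c$ within its bounded cohomology class.
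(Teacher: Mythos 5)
Your proof is correct and is essentially the paper's argument run in the opposite direction: the paper defines the map $\alpha\mapsto[\delta_1\alpha]$ from $\QM(\Gamma,\real)$ onto the kernel of the comparison map and identifies the preimage of $0$ as $\NH(\Gamma,\real)$, whereas you construct the inverse $[c]\mapsto[\alpha]$ and verify well-definedness and bijectivity directly. The key observations --- a primitive of a bounded coboundary is a quasimorphism, primitives differ by homomorphisms, and $[c]=0$ in $\Hb^2(\Gamma;\real)$ exactly when $\alpha\in\NH(\Gamma,\real)$ --- are the same in both.
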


\begin{proof}
Let $c$ be a bounded $2$-cocycle, such that $c$ is trivial in $H^2(\Gamma;\real)$. 
Then $c = \delta_1 \alpha$, for some $\alpha \colon \Gamma \to \real$. 
Thus, for all $g,h \in \Gamma$, we have
	$$ \text{$| \alpha(gh) - \alpha(g) - \alpha(h)| = | \delta_1 \alpha \, (g,h)| = |c(g,h)| \le \| c\|_\infty$ is bounded} .$$
So $\alpha$ is a quasimorphism. This establishes that $\QM(\Gamma,\real)$ maps onto the kernel of the comparison map, via $\alpha \mapsto \delta_1 \alpha$.

Now suppose $\alpha \in \QM(\Gamma,\real)$, such that $\delta_1 \alpha$ is trivial in $\Hb^2(\Gamma;\real)$. The triviality of $\delta_1 \alpha$ means there is a bounded function $c \colon \Gamma \to \real$, such that $\delta_1 \alpha = \delta_1 c$.
Then $\delta_1(\alpha - c) = 0$, so $\alpha - c$ is a homomorphism.
Since $c$ is bounded, this means that $\alpha$ is within a bounded distance of a homomorphism; i.e., $\alpha \in \NH(\Gamma,\real)$.
\end{proof}

\begin{eg}[Brooks \cite{Brooks-RemBddCoho}]
We can construct many quasimorphisms on the free group~$F_2$:
\begin{itemize}
\item As a warm-up, recall that there is an obvious homomorphism $\varphi_a$, defined by letting $\varphi_a(x)$ be the (signed) number of occurrences of~$a$ in the reduced representation of~$x$. For example,
	$$\varphi_a(a^2 b a^{3} b^2 a b^{-3} a^{-7} b^2) = 2 + 3 + 1 - 7 = -1 .$$
There is an analogous homomorphism~$\varphi_b$, and every homomorphism $F_2 \to \real$ is a linear combination of these two.

\item Similarly, for any nontrivial reduced word~$w$, we can let $\varphi_{w}(x)$ be the (signed) number of disjoint occurrences of~$w$ in the reduced representation of~$x$. For example, 
	$$\varphi_{ab}(a^2 b a^{3} b^2 a b^{-3} a^{-7} b^2) = 1 + 1 -1 = 1 .$$
This is a quasimorphism.
\end{itemize}
\end{eg}

\begin{exer} \label{WordQuasi}
Verify that $\varphi_{w}$ is a quasimorphism, for any reduced word~$w$.
\end{exer}

With these quasimorphisms in hand, it is now easy to prove a fact that was mentioned in \cref{BddCoho(F2)}:

\begin{exer} \label{H2b(free)}
Show that $\Hb^2(F_2;\real)$ is infinite-dimensional.
\hint{Verify that $\varphi_{a^k}$ ($k \ge 2$) is not within a bounded distance of the linear span of $\{\varphi_b, \varphi_a, \varphi_{a^{k+1}}, \varphi_{a^{k+2}}, \varphi_{a^{k+3}}, \ldots\}$, by finding a word~$x$, such that $\varphi_{a^k}(x)$ is large, but the others vanish on~$x$.}
\end{exer}

\begin{exers} \label{QuasiMExers}\ 
\noprelistbreak
\begin{enumerate}

\item \label{QuasiMExers-KernelFD}
Show that if $\Gamma$ is boundedly generated (by cyclic subgroups), then the kernel of the comparison map $\Hb^2(\Gamma;\real) \to H^2(\Gamma;\real)$ is finite-dimensional.
\hint{Every quasimorphism $\integer \to \real$ is a near homomorphism.}

\item \label{QuasiMExers-F2NotBddGen}
Show the free group~$F_2$ is not boundedly generated (by cyclic subgroups).
 
 \item \label{QuasiMExers-commutator}
 Show that every quasimorphism is bounded on the set of commutators $\{x^{-1} y^{-1} x y\}$.

 \item \label{QuasiMExers-amenable}
 Show that if $\Gamma$ is amenable group, and the abelianization of~$\Gamma$ is finite, then $\Gamma$ does not have unbounded quasimorphisms.
 \\ \hint{Amenable groups do not have bounded cohomology.}
 
 \item \label{QuasiMExers-SL3ZNoQuasi}
 Show that $\SL(3,\integer)$ has no unbounded quasimorphisms.
 \\ \hint{Use \cref{SL3ZBddGen}.}

\end{enumerate}
\end{exers}


\begin{appendix}
\chapter*{Hints for the exercises}

\soln{FreeGrpActsOnR}
Every finitely generated free group is a subgroup of the free group $F_2 = \langle a, b \rangle$, so we need only consider this one example.

Choose any faithful action of~$F_2$ on the circle. (For example, use the \emph{Ping-Pong Lemma} \cite[Lem.~2.3.9]{Navas-GrpsDiffeos}, 
which provides a sufficient condition for two homeomorphisms to generate a free group, or note that $F_2$ is a subgroup of $\PSL(2,\real)$, which acts faithfully on the circle by linear-fractional transformations.) Since $F_2$ is free, we can lift this to an action on the line (simply by choosing any lift of the two generators $a$ and~$b$). Since it projects down to a faithful action on the circle, this action on the line must also be faithful.

\soln{DirProdFaithful} Since any open interval is homeomorphic to~$\real$, we may let $\Gamma_1$ act on the open interval $(0,1)$ (fixing all points in the complement), and let $\Gamma_2$ act on the open interval $(1.2)$ (fixing all points in the complement). These actions commute, so they define an action of $\Gamma_1 \times \Gamma_2$.

\soln{ActIffLO} Details are in \cite[Thm.~6.8]{GhysCircleSurvey}.

\soln{ProdPos} By left-invariance, we have
	$$\text{
	$ab = a \cdot b \succ a \cdot e = a \succ e $
	\quad and \quad
	$e = a^{-1} \cdot a \succ a^{-1} \cdot e = a^{-1}$
	} .$$

\fullsoln{HeisExers}{z=[x,y]} Straightforward matrix multiplication verifies that $z = [x,y]$ and that $z$ commutes with both~$x$ and~$y$.

\fullsoln{HeisExers}{[xyyk]} Since $z = [x,y]$, we have $xy = yxz$. By induction on~$k$ (and using the fact that $z$ commutes with~$x$), then $x^k y = yx^k z^k$. By induction on~$\ell$ (and using the fact that $z$ commutes with~$y$), then $x^k y^\ell = y^\ell x^k (z^k)^\ell$ for $k,\ell \in \integer^+$. 

\fullsoln{HeisExers}{orderable} 
To apply \fullcref{LOExers}{extension}, note that $H$ has a chain of normal subgroups
	$$ \{e\} \normal \langle z \rangle \normal \langle z,x \rangle \normal H ,$$
and each quotient is isomorphic to~$\integer$ (hence, has an obvious left-invariant order). 


\fullsoln{SL3ZPfExers}{Heis} Either calculate that $\left[\ovalbox{$k-1$}, \ovalbox{$k+1$}\right] = \ovalbox{$k$}$, and that $\ovalbox{$k$}$ commutes with both $\ovalbox{$k-1$}$ and $\ovalbox{$k+1$}$, or observe that some permutation matrix conjugates the ordered triple $\left( \ovalbox{$k-1$} \, , \ovalbox{$k$} \, , \ovalbox{$k+1$} \right)$ to $(x,z,y)$.

\fullsoln{SL3ZPfExers}{FinInd} Details are in \cite[\S3]{Witte-QrankAct1mfld}.

\fullsoln{LOExers}{subgrp} The restriction of a left-invariant total order is a left-invariant total order.

\fullsoln{LOExers}{abelian} We may assume $\Gamma$ is finitely generated \fullcsee{LOExers}{locally}, so $\Gamma \iso \integer \times \cdots \times \integer$. Now use \cref{DirProdFaithful} or \fullcref{LOExers}{extension}.

\fullsoln{LOExers}{extension} Let $\prec_*$ and $\prec^*$ be left-invariant total orders on~$N$ and $\Gamma/N$, respectively. Then we define
	$$ g \prec h \quad \iff \quad  \begin{matrix}
	\text{$gN \prec^* hN$ \quad or} \\[5pt]
	\text{$gN = hN$ \ and \ $h^{-1} g \prec_* e$}
	. \end{matrix} $$
The left-invariance of $\prec_*$ and $\prec^*$ implies the left-invariance of~$\prec$.

\fullsoln{LOExers}{nilpotent} Recall that the \emph{ascending central series} 
	$$ \{e\} = Z_0 \normal Z_1 \normal \cdots \normal Z_c = \Gamma $$
is defined inductively by $Z_i/Z_{i-1} = Z \bigl( \Gamma/ Z_{i-1})$. 
Fix $i \ge 2$ and let $\overline\Gamma = \Gamma/Z_{i-2}$. For any nontrivial $z \in \overline{Z_i}$, there exists $g \in \Gamma$, such that $[\overline{g}, z]$ is a nontrivial element of~$\overline{Z_{i-1}}$, which is torsion-free (by induction). Since $[\overline{g}, z^n] = [\overline{g}, z]^n$, this implies that $Z_i/Z_{i-1}$ is torsion-free. It is also abelian, so we can apply \fullcref{LOExers}{abelian} and \fullcref{LOExers}{extension}.

\fullsoln{LOExers}{solvable} This is not at all obvious, but specific examples are given on page~52 of \cite{KopytovMedvedev}. 

Here is the general philosophy. Assume $G$ is a nontrivial, finitely generated, left-orderable group. If $G$ is solvable (or, more generally, if $G$ is ``amenable''), then it can be shown that the abelianization of~$G$ is infinite \cite{Morris-AmenOnLine}. So any torsion-free solvable group with finite abelianization provides an example.

\fullsoln{LOExers}{Exps} ($\Rightarrow$) Choose $\epsilon_i$ so that $g_i^{\epsilon_i} \succ e$. Then every element of the semigroup is $\succ e$.

($\Leftarrow$) The condition implies it is possible to choose a semigroup~$P$ in~$\Gamma$, such that $e \notin P$ and, for every nonidentity element~$g$ of~$\Gamma$, either $g \in P$ or $g^{-1} \in P$. Define $x \prec y \iff x^{-1} y \in P$. Details can be found in \cite[Thm.~3.1.1, p.~45]{KopytovMedvedev}.

\fullsoln{LOExers}{locally} Use \fullcref{LOExers}{Exps}.

\fullsoln{LOExers}{residually} Let $g_1,\ldots,g_n$ be  nontrivial elements of~$\Gamma$. For each~$i$, there is a left-orderable group~$H_i$, and a homomorphism $\varphi_i \colon \Gamma \to H_i$, such that $\varphi_i(g_i) \neq e$. For the resulting homomorphism~$\varphi$ into $H_1 \times \cdots \times H_n$, we have $\varphi(g_i) \neq e$ for all~$i$. Now apply \fullcref{LOExers}{Exps}.

\fullsoln{LOExers}{BurnsHale} Given nontrivial elements $g_1,\ldots,g_n$ of~$\Gamma$, the assumption provides a nontrivial homomorphism $\rho \colon \langle g_1,\ldots,g_n \rangle \to \real$. Assume $\rho$ is trivial on $g_1,\ldots,g_k$, and nontrivial on the rest. By induction on~$n$, we can choose $\epsilon_1,\ldots,\epsilon_k \in \{\pm1\}$, such that the semigroup generated by $\{g_1,\ldots,g_k\}$ does not contain~$e$. For $i > k$, choose $\epsilon_i$ so that $\rho(g_i^{\epsilon_i}) > 0$. Then the semigroup generated by $g_1,\ldots,g_n$ does not contain~$e$, so \fullcref{LOExers}{Exps} applies. (Details can be found on page~50 of \cite{KopytovMedvedev}.)

\fullsoln{BddGenExers}{quotient} If $\Gamma = H_1 \cdots H_n$, then $\Gamma/N = \overline{H_1} \cdots \overline{H_n}$, where $\overline{H_i}$ is the image of~$H_i$ in $\Gamma/N$.

\fullsoln{BddGenExers}{modnth} Call the subgroup~$N$. Then $N$~is a normal subgroup, so \fullcref{BddGenExers}{quotient} tells us that $\Gamma/N$ is boundedly generated by cyclic groups. However, every element of $\Gamma/N$ has finite order, so all of these cyclic groups are finite.

\fullsoln{BddGenExers}{FinInd} 
($\Leftarrow$) For a normal subgroup~$N$ of~$\Gamma$, it is easy to see that if $N$ and $\Gamma/N$ are boundedly generated, then $\Gamma$ is boundedly generated. Also note that finite groups are (obviously) boundedly generated.

($\Rightarrow$) To present the main idea with a minimum of notation, let us assume $\Gamma = H K$ is the product of just two cyclic groups. Let $\dot K = K \cap \dot\Gamma$, and let $\{k_1,\ldots,k_n\}$ be a set of coset representatives for $\dot K$ in~$K$. There exists a finite-index subgroup~$\dot H$ of~$H$, such that the conjugate $\dot H^{k_j}$ is contained in~$\dot\Gamma$ for every~$j$. Let $\{h_1,\ldots,h_m\}$ be a set of coset representatives for $\dot H$ in~$H$. Given $g \in \dot\Gamma$, we may choose $h \in h$ and $k \in K$, such that
	$$g = h k
	= (h_i \dot h) (k_j \dot k)
	= (h_i k_j) \dot h^{k_j} \dot k 
	.$$
Therefore, if we let $\ell_1,\ldots,\ell_r$ be a list of the elements in $\dot\Gamma \cap \{h_i k_j\}$, then
	$$ \dot\Gamma 
	= \langle \ell_1 \rangle \cdots \langle \ell_r \rangle 
	\ 
	\dot H^{k_1} \cdots \dot H^{k_n} 
	\ 
	\dot K 
	.$$

\fullsoln{BddGenExers}{SL2Z} Let $\Gamma$ be a free subgroup of finite index in $\SL(2,\integer)$. Then \fullcref{QuasiMExers}{F2NotBddGen} tells us that $\Gamma$ is not boundedly generated, so \fullcref{BddGenExers}{FinInd} implies that $\SL(2,\integer)$ also is not boundedly generated by cyclic groups. Since $\U$ and~$\V$ are cyclic, this completes the proof.

\fullsoln{BddGenExers}{VariablePowers} 
The argument is somewhat similar to \fullcref{BddGenExers}{FinInd}($\Rightarrow$). Let $\dot\Gamma$ be the subgroup that is under consideration, and let us assume, for simplicity, that $\Gamma = HK$ is the product of just two cyclic groups. Let $\dot K = K \cap \dot\Gamma$, and let $\{k_1,\ldots,k_n\}$ be a set of coset representatives for $\dot K$ in~$K$. 

The key point is to observe that, by definition, $\dot\Gamma$ contains a finite-index subgroup of each~$H^{k_j}$, so we may choose a finite-index subgroup~$\dot H$ of~$H$, such that $\dot H^{k_j}$ is contained in~$\dot\Gamma$ for every~$j$. Let $\{h_1,\ldots,h_m\}$ be a set of coset representatives for $\dot H$ in~$H$. For $g \in \Gamma$, we have
	$$g = h k
	= (h_i \dot h) (k_j \dot k)
	= (h_i k_j) \dot h^{k_j} \dot k 
	\in (h_i k_j) \dot\Gamma
	.$$
Therefore, $\{ h_i k_j \}$ contains a set of coset representatives, so the index of~$\dot\Gamma$ is at most~$mn$.

\soln{BddGenIsom} The Triangle Inequality implies that \emph{every} orbit of every cyclic group is bounded. Now, for any $x \in X$, any $R \in \real^+$, and any cyclic subgroup~$H_i$ of~$\Gamma$, this implies there exists $r_i \in \real^+$, such that $H_ix$ is contained in the ball $B_{r_i}(x)$. By the Triangle Inequality, we have $H_i \cdot B_R(x) \subseteq B_{R+r_i}(x)$. By induction, if $\Gamma = H_1 \cdots H_n$, then $\Gamma x \subseteq B_{r_1 + \cdots + r_n}(x)$.

\soln{AbelNoPonzi} Suppose $M$ is a Ponzi scheme on~$\Gamma$, and $M(g) \in gS$ for all~$g$. Let $k$ be the maximum word length of an element of~$S$. Then, since $M$ is (at least) 2-to-1, we know that the ball of radius $r + k$ has at least twice as many elements as the ball of radius~$r$. So $\Gamma$ has exponential growth.

\fullsoln{FolnerEx}{Free} Assume, without loss of generality, that (at least) $3/4$ of the elements of~$F$  do \emph{not} start with~$a^{-1}$. Then $3/4$ of~$aF$ starts with~$a$ and $3/4$ of~$baF$ starts with  $b$. If $F$ is almost invariant, this implies that almost half of~$F$ starts with both~$a$ and~$b$.

\fullsoln{FolnerEx}{union} Let $n = \#S$, and choose $F$ so that $\#(F \cap aF) > \bigl( 1 - (\epsilon/n) \bigr) \#F$ for all $a \in S$. Then $\#(aF \smallsetminus F) < \epsilon/n$ for all $a \in S$, so $\#(SF) - \#F < n \cdot (\epsilon/n) = \epsilon$.

\fullsoln{FolnerEx}{QI} It will suffice to prove the hint, since it gives a condition that is invariant under quasi-isometry.

Suppose $\Gamma$ is not amenable. Then there exist $S$ and~$\epsilon$, such that $\#(SF) \ge (1 + \epsilon) \#F$, for every finite subset~$F$ of~$\Gamma$. Choosing $n$~large enough that $(1 + \epsilon)^n > c$, then we have $\#(S^nF) \ge c \cdot \#F$.

The other direction is immediate from \fullcref{FolnerEx}{union}.

\fullsoln{FolnerEx}{noPonzi} Suppose $M$ is a Ponzi scheme on~$\Gamma$, and $M(g) \in gS$ for all~$g$. From \fullcref{FolnerEx}{union} (and replacing $F$ with~$F^{-1}$ to convert left-translations into right-translations), we know there is a finite set~$F$, such that $\#(F S^{-1}) < 2 \cdot \#F$. This is impossible, since $M$ is (at least) 2-to-1 and $M^{-1}(F) \subseteq F S^{-1}$.

\fullsoln{FolnerEx}{Ponzi} In the special case where $a_i = 1$ for all~$i$, the hint is known as \emph{Hall's Marriage Theorem}, and can be found in many combinatorics textbooks. The general case is proved similarly.
Since there are only finitely many possibilities for each set~$A_i'$, a standard diagonalization argument shows that the result is also valid for an infinite sequence $A_1,A_2,\ldots$ of finite sets and an infinite sequence $\{a_i\} \subseteq \natural$.

From the hint to \fullcref{FolnerEx}{QI}, there exists $S \subset \Gamma$, such that $\#(FS^{-1}) \ge 2 \cdot \#F$ for every finite $F \subset \Gamma$. For $y \in \Gamma$, let $A_y = y S^{-1}$ and $a_y = 2$. Then there exists $A_y' \subseteq A_y$, such that $\#A_y' = 2$ and the sets $\{A_y'\}_{y \in \Gamma}$ are pairwise disjoint. Define $M(g) = y \in g S$ for all $g \in A_y'$.

\fullsoln{VonNeumannEx}{noParadox}
Let
	$$ a = \sum_{i = 1}^m \mu(A_i) \text{\qquad and\qquad} b = \sum_{j = 1}^n \mu(B_j) ,$$
where $\mu$ is a finitely additive, translation-invariant probability measure on~$\Gamma$. Then, since $A_1,\ldots,A_m,B_1,\ldots,B_n$ are pairwise disjoint, we have
	$$a + b = \mu(\Gamma) = 1 .$$
On the other hand, since $\Gamma = \bigcup_{i=1}^m g_i A_i $, we have
	$$ 1 = \mu(\Gamma) 
	= \mu \left( \bigcup_{i=1}^m g_i A_i \right)
	\le \sum_{i = 1}^m \mu( g_i A_i )
	= \sum_{i = 1}^m \mu( A_i )
	= a ,$$
and, similarly, $b = 1$. This is a contradiction.

\fullsoln{VonNeumannEx}{free}
Let $A_1$, $A_2$, $B_1$, and~$B_2$ be the reduced words that start with $a$, $a^{-1}$, $b$, or~$b^{-1}$, respectively. (Also add $e$ to one of these sets.) Then 
	$$ F_2 = a^{-1} A_1 \cup a A_2 = b^{-1} B_1 \cup b B_2 .$$

\fullsoln{VonNeumannEx}{Paradox}
Let $M$ be a Ponzi scheme, and choose $S$ such that $M(g) \in Sg$. Let $A$ contain a single element of $M^{-1}(x)$, for every $x \in \Gamma$, and let $B$ be the complement of~$A$. Then, for $s \in S$, let
	$ A_s = \{\, g \in A \mid M(g) = sg \,\}$ 
	and
	$ B_s = \{\, g \in B \mid M(g) = sg \,\}$.
By construction, these sets are pairwise disjoint, and we have $\Gamma = \bigcup_{s \in S} s A_s = \bigcup_{s \in S} s B_s$.

\fullsoln{EgAmenEx}{finite} This is obvious from almost any characterization of amenability. For example, letting $F = \Gamma$ yields a nonempty, finite set that is invariant, not merely almost-invariant.

\fullsoln{EgAmenEx}{cyclic} By \fullcref{EgAmenEx}{finite}, it suffices to consider the infinite cyclic group~$\integer$. A long interval $\{0,1,2,\ldots,n\}$ is almost-invariant.

\fullsoln{EgAmenEx}{product} The Cartesian product of two almost-invariant sets is almost-invariant.

\fullsoln{EgAmenEx}{abelian} We may assume $\Gamma$ is finitely generated (by \fullcref{EgAmenEx}{locally}), so it is a direct product of finitely many cyclic groups.

\fullsoln{EgAmenEx}{extension} This is difficult to do with almost-invariant sets, because multiplication by an element of $G/N$ will act by conjugation on~$N$, which may cause distortion.

It is perhaps easiest to apply \cref{AmenOnConvex}. Since $N$ is amenable, it has fixed points in~$C$. The set $C^N$ of such fixed points is a closed, convex subset.  Also, it is $\Gamma$-invariant (because $N$ is normal). So $\Gamma$ acts on~$C^N$. Since $N$ is trivial on this set, the action factors through to $\Gamma/N$, which must have a fixed point. This is a fixed point for~$\Gamma$.

\fullsoln{EgAmenEx}{solv} By definition, a solvable group is obtained by repeated extensions of abelian groups, so this follows from repeated application of \fullcref{EgAmenEx}{extension}.

\fullsoln{EgAmenEx}{subgrp} This is another case that is difficult to do with almost-invariant sets. Instead, note that if there is a Ponzi scheme on some subgroup of~$\Gamma$, then it could be reproduced on all of the cosets, to obtain a Ponzi scheme on all of~$\Gamma$. This establishes the contrapositive.

\fullsoln{EgAmenEx}{quotient} This is immediate from \cref{ActHasInvtMeas} (or \cref{AmenOnConvex}), because any action of $\Gamma/N$ is also an action of~$\Gamma$. It also follows easily from \cref{VonNeumann}, since any subset of $\Gamma/N$ pulls back to a subset of~$\Gamma$.

\fullsoln{EgAmenEx}{locally} Given $S$ and~$\epsilon$, let $H$ be the subgroup generated by~$S$, so $H$ is finitely generated. If $H$ is amenable, then it contains an almost-invariant set, which is also an $(S,\epsilon)$-invariant set in~$G$.

\fullsoln{EgAmenEx}{limit} This is immediate from \fullcref{EgAmenEx}{locally}, because any finite subset of $\bigcup \mathcal{A}$ must be contained in one of the sets in~$\mathcal{A}$.

\fullsoln{EgAmenEx}{subexp} See the hint to \fullcref{FolnerEx}{QI}.

\soln{EulerTrivial} ($\Leftarrow$)~By assumption, we may choose the lifts in such a way that $\lift g \lift h = \lift{gh}$ for all $g$ and~$h$. So $c = 0$.

($\Rightarrow$) We have $c(g,h) = \varphi(gh) - \varphi(g) - \varphi(h)$ for some $\varphi \colon \Gamma \to \integer$. Then, letting $\widetilde\rho(g) (t) = \lift g(t) + \varphi(g)$, we have $\widetilde\rho( g ) \, \widetilde\rho( h )  = \widetilde\rho(gh)$, so $\widetilde\rho$ is a homomorphism.

\soln{EulerIsBdd} 
We have $c(g,h) = \lift g \bigl( \lift h(0) \bigr) - \lift{gh}(0)$. Note that
	\begin{itemize}
	\item $0 \le \lift{gh}(0) < 1$,
	and
	\item $ 0 \le \lift g(0) \le \lift g \bigl( \lift h(0) \bigr) < \lift g(1) = \lift g(0) + 1 < 1 + 1 = 2$,
	\end{itemize}
so both terms on the right-hand side are bounded.

\soln{WordQuasi}
In fact $\varphi_w(xy)$ never differs by more than~$1$ from $\varphi_w(x) + \varphi_w(y)$. There is a difference only if some occurrence of~$w$ (or~$w^{-1}$) overlaps the boundary between $x$ and~$y$, and there cannot be two such occurrences that are disjoint.

\soln{H2b(free)}
Let $x = (a^kbab^{-1})^n(a^{-(k-1)}b^2a^{-1}b^{-1}a^{-1}b^{-1})^n$.

\fullsoln{QuasiMExers}{KernelFD}
Write $\Gamma = H_1 \cdots H_r$. Any quasimorphism on~$\Gamma$ is determined, up to bounded error, by its restriction to the cyclic subgroups $H_1,\ldots,H_r$. 
Also, it is not difficult to show that every quasimorphism $\integer \to \real$ is a near homomorphism. (Or this can be deduced from \cref{Hbb(amen;R)=0} and \cref{KernelComparison}.) So the restriction of~$\varphi$ to each~$H_i$ is a near homomorphism. Since the homomorphisms from~$\integer$ to~$\real$ form a one-dimensional space, we conclude that the dimension of $\QM(\Gamma;\real)/\ell^\infty(\Gamma;\real)$ is at most~$r$.

\fullsoln{QuasiMExers}{F2NotBddGen} Compare \cref{H2b(free)} with \fullcref{QuasiMExers}{KernelFD}.

\fullsoln{QuasiMExers}{commutator} We have
	\begin{align*} \varphi ( x^{-1} y^{-1} x y )
	&= \varphi ( x^{-1} y^{-1} ) + \varphi( x y \bigr) \pm C
	= \varphi ( x^{-1} ) + \varphi(  y^{-1} ) + \varphi( x ) + \varphi(  y \bigr) \pm 3C
	\\&= \varphi ( x^{-1} x ) + \varphi(  y^{-1} y ) \pm 5C
	=  2\varphi ( e ) \pm 5C
	, \end{align*}
so $| \varphi ( x^{-1} y^{-1} x y )| \le 2 |\varphi ( e )| + 5C$.

\fullsoln{QuasiMExers}{amenable} Let $\varphi \colon \Gamma \to \real$ be a quasimorphism. From \cref{Hbb(amen;R)=0} and \cref{KernelComparison}, we see that $\varphi$ is within bounded distance of a homomorphism. However, since the abelianization of~$\Gamma$ is finite, there are no nontrivial homomorphisms $\Gamma \to \real$. Therefore $\varphi$ is bounded.

\fullsoln{QuasiMExers}{SL3ZNoQuasi} Every elementary matrix is a commutator (recall that $[x^k, y] = z^k$), so \fullcref{QuasiMExers}{commutator} implies that $\varphi$ is bounded on the set of elementary matrices. Since every element of $\SL(3,\integer)$ is the product of a bounded number of these elementary matrices (see \cref{SL3ZBddGen}), a simple estimate shows that $\varphi$ is bounded.

\end{appendix}


\end{document}